\newcommand{\TheTitle}{Gaussian approximations for transition paths in Brownian Dynamics} 
\newcommand{\TheAuthors}{Y. Lu, A. M. Stuart and H. Weber}
\title{{\TheTitle}\thanks{The authors are grateful Frank Pinski for helpful discussions and insights. YL is is supported by EPSRC as part of the MASDOC DTC at the University of Warwick with grant
No. EP/HO23364/1. The work of AMS is supported by DARPA, EPSRC
and ONR. The work of HW is supported by EPSRC and the Royal Society.}}
\author{
  Yulong Lu \thanks{Mathematics Institute, University of Warwick, Coventry, CV4 7DL, UK
    (\email{yulong.lu@warwick.ac.uk},  \email{hendrik.weber@warwick.ac.uk}).}
  \and
  Andrew Stuart \thanks{Computing \& Mathematical Sciences, California Institute of Technology, Pasadena, CA 91125, USA (\email{astuart@caltech.edu}).}
  \and
  Hendrik Weber\footnotemark[2]
}
\theoremstyle{nonumberplain}
\newtheorem{prooffoursix}{Proof of Proposition \ref{prop:compact-2}}
\newtheorem{proof45}{Proof of Proposition \ref{prop:sfeps}}
\newtheorem{proof4main}{Proof of Theorem \ref{thm:main}}
\newtheorem{proof4cor}{Proof of Corollary \ref{cor:minima}}
\renewcommand{\Delta}{\triangle}
\definecolor{darkblue}{rgb}{0,0,0.7}
\definecolor{darkgreen}{rgb}{0.01,0.75,0.24}
\def \Ee[#1]{\mathcal{E}^{\text{{#1}}}}
\def\R{\mathbf{R}}
\def \EE{\mathscr{E}} 
\def\pa[#1,#2]{\frac{\partial {#1}}{\partial {#2}} }
\def\idom[#1,#2,#3]{\int_{#1}\hspace{1pt} {#2} \hspace{1pt} \text{d}{#3}}
\def\res[#1,#2]{\left.{#1}\right|_{#2}}
\def\gt{\rightarrow}
\def\var[#1,#2]{\langle \delta \mathcal{E}^{\text{{#1}}}({#2}),v\rangle}
\def\vars[#1,#2,#3]{\langle \delta^2\mathcal{E}^{\text{{#1}}}({#2})v,{#3}\rangle}
\def\vard[#1,#2,#3,#4]{\langle \delta\mathcal{E}^{\text{{#1}}}({#2})-\delta\mathcal{E}^{\text{{#3}}}({#4}),v\rangle}
\def\P{\mathbb{P}}
\def\E{\mathbb{E}}
\def\Ec{\mathcal{E}}
\def\Rc{\mathcal{R}}
\def\Kc{\mathcal{K}}
\def\N{\mathbb{N}}
\newcommand{\bD}{\boldsymbol{\partial}_t^2}
\newcommand{\bSig}{\boldsymbol{\Sigma}}
\newcommand{\bLambda}{\boldsymbol{\Lambda}}
\newcommand{\bM}{\mathbf{M}}
\newcommand{\bF}{\mathbf{F}}
\newcommand{\bP}{\mathbf{P}}
\newcommand{\bR}{\mathbf{R}}
\newcommand{\bG}{\mathbf{G}}
\newcommand{\bA}{\mathbf{A}}
\newcommand{\bB}{\mathbf{B}}
\newcommand{\bC}{\mathbf{C}}
\newcommand{\bDD}{\mathbf{D}}
\newcommand{\cO}{\mathcal{O}}
\newcommand{\A}{\mathcal{A}}
\newcommand{\K}{\mathcal{K}}
\newcommand{\HH}{\mathcal{H}}
\newcommand{\eps}{\varepsilon}
\newcommand{\wgt}{\rightharpoonup}
\newcommand{\Tr}{\mathrm{Tr}}
\newcommand{\bH}{\mathbf{H}}
\newcommand{\bL}{\mathbf{L}}
\newcommand{\bI}{\mathbf{I}}
\newcommand{\bX}{\mathbf{X}}
\newcommand{\BV}{\textbf{BV}}
\newcommand{\Ms}{\mathscr{M}}
\newcommand{\be}{\begin{equation}}
\newcommand{\en}{\end{equation}}
\newcommand{\ben}{\begin{equation*}}
\newcommand{\enn}{\end{equation*}}
\newcommand{\bea}{\begin{aligned}}
\newcommand{\ena}{\end{aligned}}
\def\ba#1\ena{\begin{align}#1\end{align}}
\def\ban#1\enan{\begin{align*}#1\end{align*}}
\def\dpm {\prime\prime}
\theoremstyle{plain}
\newtheorem{thm}{Theorem}[section]
\newtheorem{defn}[thm]{Definition}
\newtheorem{lem}[thm]{Lemma}
\newtheorem{cor}[thm]{Corollary}
\newtheorem{prop}[thm]{Proposition}
\newtheorem{assumptions}[thm]{Assumptions}
\theoremstyle{remark}
\newtheorem{rem}[thm]{Remark}
\numberwithin{equation}{section}
\begin{document}

\maketitle

\begin{abstract}
This paper is concerned with transition paths within the framework
of the overdamped Langevin dynamics model of chemical reactions.  
We aim to give an efficient description of typical transition 
paths in the small temperature regime. We adopt a variational point of view 
and seek the best Gaussian approximation, with respect to Kullback-Leibler 
divergence, of the non-Gaussian distribution of the diffusion process. 
 We interpret the mean of this Gaussian approximation as the 
``most likely path'' and the covariance operator as a means to capture the typical fluctuations 
 around this most likely path. 

 We give an explicit expression for the Kullback-Leibler divergence in terms of the mean and the covariance operator for a natural class of Gaussian approximations and show the existence of 
 minimisers for the variational problem. 
Then the low temperature limit 
 is studied via $\Gamma$-convergence of the associated variational problem.
 The limiting functional consists of two parts: The first part only depends on the mean and
  coincides with the $\Gamma$-limit of the rescaled Freidlin-Wentzell rate functional. The second 
  part depends on both, the mean and the covariance operator and is minimized if the dynamics 
  are given by a time-inhomogenous Ornstein-Uhlenbeck process found by
linearization of the Langevin dynamics around the Freidlin-Wentzell minimizer.



\end{abstract}

\begin{keywords}
Transition path, 
Kullback-Leibler approximation, Onsager-Machlup functional, large deviations, Gamma-convergence.
\end{keywords}

\begin{AMS}
  28C20, 60G15, 60F10
\end{AMS}

\section{Introduction}
Determining the behavior of transition paths of complex 
mo- lecular dynamics is essential for understanding many problems in physics, chemistry and biology. Direct simulation of these systems can be prohibitively expensive, mainly due to the fact that the dynamical systems can exhibit the phenomenon of
{\em metastability}, which involves disparate time scales: the transition 
{\em between} metastable states is logarithmic in the inverse temperature, 
whilst 
fluctuations {\em within} the metastable states have durations which are 
exponential in the inverse
temperature. In many systems the interest is focused on the 
transition between metastable states and not the local fluctuations within
them. This paper addresses the problem of characterizing the most likely 
transition paths of molecular models of chemical reactions.

We focus on the Brownian dynamics model from molecular dynamics which
takes the form of a gradient flow in a potential, subject to small thermal
fluctuations:
\be\label{eq:LSDE}
d x(t) = -\nabla V(x(t))  dt+ \sqrt{2\eps} d W(t);
\en
we study the equation subject to the end-point conditions
\be\label{eq:LSDEBD}
x(0) = x_-, \quad x(T) = x_+.
\en 
Here $V:\R^d \gt \R$ is the potential function, $W$ is a standard Brownian motion in $\R^d$ and $\eps > 0$ is a small parameter related to the temperature of the thermal system. The Brownian dynamics model is widely used in the study
of molecular dynamics \cite{LSR10}. It is also referred to as the overdamped 
Langevin equation, and can be derived from the second order Langevin dynamics 
model, which has the form of damped-driven Newtonian dynamics with potential
energy $V$, by taking a large friction or a small mass limit; see 
\cite[Chapter 7, Exercise 8]{pavliotis2008multiscale} and \cite{LSR10}
for explicit derivations. 

Mathematically we understand the process $x(t), t\in [0, T]$ 
satisfying \eqref{eq:LSDE}, \eqref{eq:LSDEBD} to be the initial
value problem of \eqref{eq:LSDE} starting from $x(0) = x_-$, subject to
the  conditioning $x(T) = x_+$ \cite{HSV07}. We propose to study the
sample path of this conditioned process as a model for the temporal evolution of molecules making a transition between two atomistic configurations $x_\pm$. In this paper, we will assume that $x_{\pm}$ are critical points of $V$; indeed most interest focuses on the case where both endpoints are chosen to be local minima of $V$.

When the temperature $\eps$ is small and when the end-point condition on $x(T)$ is removed,  typical realisations of \eqref{eq:LSDE} exhibit fluctuations around the local minima of $V$ for long stretches of time (exponential in $\eps^{-1}$)
%
while the occasional rapid transitions between different minima occur on a much shorter time scale which is only logarithmic in $\eps^{-1}$. The difference between these time scales makes it difficult to sample transition paths when $\eps$ is small. As
an alternative to direct sampling, several notions of ``most likely transition paths'' have been proposed;
of particular interest here are the Freidlin-Wentzell 
and Onsager-Machlup theories.

In the zero temperature limit $\eps \gt 0$, the behaviour of transition paths can be predicted with overwhelming probability using Freidlin-Wentzell theory \cite{FSW12}. For any fixed $T$, the solution process $\{x(t), t\in[0, T]\}$ to  \eqref{eq:LSDE}, \eqref{eq:LSDEBD} satisfies a large deviation principle with rate (or action) functional given by
\be\label{eq:action}
\overline{S}_T(\varphi) := \frac{1}{4}\int_0^T |\varphi^\prime(t) + \nabla V(\varphi(t))|^2 dt
\en
with $\varphi \in H^1_{\pm}(0, T; \R^d) := \{x\in H^1(0, T; \R^d): x(0) = x_-, x(T) = x_+ \}$. Loosely speaking the large deviation principle states that for any small $\delta > 0$, the probability that the solution $x$ lies in a tube of width $\delta$ around a given path $\varphi$ is approximately given by
\be\label{eq:ldp}
\P\{x: \sup_{t\in [0, T]}|x(t) - \varphi(t)| \leq \delta\} \approx \exp(-\eps^{-1}\overline{S}_T(\varphi))
\en
for $\eps$ small enough. Here $\P$ denotes the law of the process defined in \eqref{eq:LSDE}, \eqref{eq:LSDEBD}.  The large deviation principle thus characterizes 
the exponential tail of the distribution of the transition paths; but what is of 
most interest to us is that it leads to a natural variational definition of the 
most likely path: the minimizer of the rate functional $\overline{S}_T$ can be 
interpreted as most likely path in the sense that the probability of a trajectory
in a small neighbourhood of this minimizer is exponentially larger in $\eps^{-1}$ 
than the probability of hitting neighbourhoods of any other paths.

In view of the boundary conditions \eqref{eq:LSDEBD}, one can rewrite the functional $\overline{S}_T$ as
\be\label{eq:rate}
\bea
\overline{S}_T(\varphi) & := \frac{1}{4}\int_0^T |\varphi^\prime(t) + \nabla V(\varphi(t))|^2 dt\\
& = \frac{1}{4}\int_0^T |\varphi^\prime(t)|^2 + |\nabla V(\varphi(t))|^2 dt + \frac{1}{2} \int_0^T \varphi^\prime(t) \cdot \nabla V(\varphi(t)) dt\\
& = \frac{1}{4}\int_0^T |\varphi^\prime(t)|^2 + |\nabla V(\varphi(t))|^2 dt + \frac{1}{2} \left((V(x_+) - V(x_-)\right).
\ena
\en
The last term in this expression only depends on the boundary conditions and not on the 
specific choice of $\varphi$. 
Hence minimizing $\overline{S}_T(\varphi)$ is equivalent to minimizing the following Freidlin-Wentzell functional
\be\label{eq:FW}
S_T(\varphi) := \frac{1}{4}\int_0^T |\varphi^\prime(t)|^2 + |\nabla V(\varphi(t))|^2 dt
\en
over $H^1_{\pm}(0, T; \R^d)$, and from now on we refer to the minimization of this functional as the Freidlin-Wentzell approach.
The Freidlin-Wentzell viewpoint has been enormously influential
in the study of chemical reactions. For example the string method 
\cite{ERV02,ERV05} is based on minimization of the action functional 
\eqref{eq:action} over paths parameterized by arc-length. See
the the review article \cite{Vanden-Eijnden14} for recent development of 
transition path theory. 


At finite temperature $\eps > 0$, optimal transition paths can be 
defined as minimizers of the Onsager-Machlup functional \cite{DB78}. This functional is defined by maximizing small ball probabilities for paths
$x(\cdot)$ solving \eqref{eq:LSDE}, \eqref{eq:LSDEBD}.
To be more precise, we denote by $\P_0$ the law of the Brownian bridge on $[0, T]$ connecting $x_-$ and $x_+$, corresponding to vanishing drift ($V = 0$) in \eqref{eq:LSDE},  \eqref{eq:LSDEBD}, which depends on $\eps.$ Then under certain conditions on $V$ (see (ii) of Remark \ref{rem:cond}), the measure $\P$ is absolutely continuous with respect to $\P_0$ and the Radon-Nikodym density is given by
\be\label{eq:mumu}
\frac{d\P}{d\P_0} (x) = \frac{1}{Z}\exp\left(-\frac{1}{2\eps}\int_0^T \Psi_\eps(x(t)) dt \right)
\en
where
\be\label{eq:psi}
\Psi_\eps(x) := \frac{1}{2} |\nabla V(x)|^2 - \eps \Delta V(x).
\en
Equation \eqref{eq:mumu} follows from Girsanov formula and It\^o's formula, see \cite[Section 2]{PST}. 
We define the Onsager-Machlup functional $I_\eps$ over the space $H^1_{\pm}(0,T;\R^d)$ by  
\be\label{eq:om}
I_\eps (x) := \frac{1}{2} \int_0^T \left( \frac{1}{2} |x^\prime(t)|^2 + \Psi_\eps(x(t)) \right) dt=S_T(x)-\frac{\eps}{2} \int_0^T \Delta V(x(t))dt. 
\en
In \cite{DB78} it was shown that for any $x_1, x_2 \in H^1_{\pm}(0,T;\R^d)$
$$
\lim_{\delta\gt 0} \frac{\P(B_\delta(x_1))}{\P(B_\delta(x_2))} = \exp \left(\frac{1}{\eps} (I_\eps(x_2) - I_\eps(x_1))\right)
$$
where $B_r(x)$ denotes a ball in $C([0,T];\R^d)$ with center $x$ and radius $r$. Hence for any fixed $x_2$, the above ratio of the small ball probability, as a function of $x_1$, is maximized at minimizers of $I_\eps$. In this
sense  minimizers of $I_\eps$ are analogous to
{\em Maximum A Posterior (MAP) estimators} which arise for the posterior
distribution $\P$ in Bayesian inverse problems; see \cite{DLSV13}.

The Onsager-Machlup functional \eqref{eq:om} differs from the Freidlin-Wentzell
functional only by the integral of the It\^o correction term $\eps \Delta V$. 
This difference 
arises because of the order in which the limits $\eps \to 0$
and $\delta \to 0$ are taken: in Freidlin-Wentzell theory the radius of
the ball $\delta$ is fixed and limit $\eps \to 0$ is studied while in
Onsager-Machlup theory $\eps$ is fixed and limit $\delta \to 0$ is studied.
For fixed $T > 0$, it is clear that $I_\eps(\varphi) \gt S_T(\varphi)$ as $\eps \gt 0$. Hence for fixed time scale $T$ the Onsager-Machlup theory agrees with the Freidlin-Wentzell theory in the low temperature limit. However, this picture can be different for large $T$, more precisely when $T \gt \infty$ as $\eps \gt 0$. 
 In fact,
as demonstrated in \cite{PS}, it is possible that when $T \gg 1$,
the MAP transition path spends a vast amount
of time at a saddle point of $V$ rather than at minima; moreover, for two paths with the same
energy barrier, the one passing through steeper confining walls is always preferred to the
other since a larger value of $\Delta V$ gives rise to a 
lower value of $I_\eps$. The discussion about the order of limits gives
a clue as to why this apparent contradiction occurs: by studying the limit
$\delta \to 0$ in Onsager-Machlup theory, for fixed temperature $\eps$, 
we remove entropic effects.


Both minimizing the Onsager-Machlup functional \eqref{eq:om} or finding MAP estimators 
are attempts to capture key properties of the distribution $\P$ by identifying a single most likely path. This   
can
be viewed as approximating the measure $\P$ by a Dirac measure in a well-chosen point. 
The key idea in this paper is to find better approximations to $\nu$ by
working in a larger class of measures than Diracs. We
will study the  best Gaussian approximations
with respect to Kullback-Leibler divergence. The mean of an optimal Gaussian should capture the concentration of the
target measure while its fluctuation characteristics are described by the 
covariance of the Gaussian. Furthermore the fluctuations can capture
entropic effects. Thus by using the Gaussian approximation we aim to
overcome the shortcomings of the Onsager-Machlup approach.
The idea of finding Gaussian approximations for non-Gaussian measures by means of the Kullback-Leibler divergence is not new. For
example, in the community of machine learning \cite{RW06}, Gaussian processes have been widely
used together with Bayesian inference for regression and prediction. Similar ideas have also been used to study models in ocean-atmosphere science \cite{MG11} and computational quantum mechanics \cite{bartok2010gaussian}. Recently, the problem of minimizing the Kullback-Leibler divergence between non-Gaussian measures and certain Gaussian classes was studied from the calculus of variation point of view \cite{PSSW15a} and numerical algorithms for Kullback-Leibler minimization were discussed in \cite{PSSW16}.

 The present paper builds on the theory developed in \cite{PSSW15a} and extends it to transition path theory. More specifically, the set of Gaussian measures for approximations is parameterized by a pair of functions $(m, \bA)$, where $m$ represents the mean and $\bA$ (defined in \eqref{eq:ou-1}) is used to define the covariance operator for the underlying Gaussian measure. For a fixed temperature $\eps$, the Kullback-Leibler divergence is  expressed as a functional $F_\eps$ depending on $(m, \bA)$ and existence of minimizers is shown in this framework. 
Then the asymptotic behaviour of the best Gaussian approximations in the low temperature limit is studied in terms of the $\Gamma$-convergence of the functionals $\{F_\eps\}$. The limiting functional (defined in \eqref{e:main-functional}) is identified as the sum of two parts. The first part, depending only on $m$, is identical to the $\Gamma$-limit of the rescaled Freidlin-Wentzell action functional, implying that for $\eps \to 0$ the most likely transition paths defined as the best Gaussian mean $m$ coincide with large deviation paths. The second part takes entropic effects into account and expresses the penalty for the fluctuations in terms of $\bA$; it vanishes if  $\bA = D^2 V (m(t))$ but this choice of 
$\bA$ is only admissible if  the Hessian $D^2 V (m)$ is positive definite. A strictly positive penalty occurs when $D^2 V (m(t))$ has a negative eigenvalue. Therefore minimizing the limiting functional amounts to selecting those optimal paths $m$ among the large deviation paths  that do not spend time in saddles or local maximizers. We stress that although at finite noise intensity $\eps > 0$ there 
is no explicit characterization of our most likely transition paths,  it is possible to approximately determine them numerically, as demonstrated in \cite{PSSW16}, see also Section \ref{sec:conclusion}.


This paper is organized as follows. In the next section we introduce
a time-rescaling of the governing Langevin equation, in terms of $\eps$, 
in which the undesirable effects of the Onsager-Machlup minimization are 
manifest; we also introduce some notation used throughout the paper. Furthermore, assumptions on the potential $V$ are discussed. In Section \ref{sec:kl}, we define the subset of Gaussian measures over which Kullback-Leibler 
minimization is conducted; the existence of minimizers to the variational 
problem is established at the end of this section. Then in Section \ref{sec:gammalim}, we study the low temperature limit of the Gaussian approximation using $\Gamma$-convergence. The main $\Gamma$-convergence result is given in Theorem \ref{thm:main}. Section \ref{sec:conclusion} discusses some important consequences of the $\Gamma$-convergence result, with emphasis on the link with theories of Freidlin-Wentzell and Onsager-Machlup. The proofs of Theorem \ref{thm:main} and some related results are presented in Section \ref{sec:proofs}.

\section{Set-up and Notation}

\subsection{Set-up}

As discussed in the previous section, the key issue which motivates our
work is the difference in behaviour between minimizers of the
Freidlin-Wentzell action and the Onsager-Machlup functional. This difference
is manifest when $T \gg 1$ and is most cleanly described by considering
the time scale $T = \eps^{-1}$. The $\Gamma$-limit of the Onsager-Machlup functional \eqref{eq:om} is studied, as $\eps \to 0$, under this time-rescaling,
in \cite{PST}; the limit exhibits the undesirable effects described in the preceding section.
Our objective is to characterize the $\Gamma$-limit for the variational 
problems arising from best Gaussian approximation with respect
to Kullback-Leibler divergences, under the same limiting process. 

Applying the time scaling $t \mapsto \eps^{-1} t$ to the equation \eqref{eq:LSDE} and noticing the boundary conditions \eqref{eq:LSDEBD}, yields 
 \be\label{eq:LSDE2}
 \begin{aligned}
& d x(t) = -\eps^{-1} \nabla V(x(t)) dt + \sqrt{2} d W(t), \\
& x(0) = x_-, \quad x(1) = x_+.
\end{aligned}
\en
The transformed SDE has an order one noise but a strong drift; it
will be our object of study throughout the remainder of the paper.
For technical reasons, we make the following assumptions on the potential $V$.

\begin{assumptions}\label{assump}
The potential $V$ appearing in \eqref{eq:LSDE2} satisfies:
\begin{itemize}

\item[ (A-1)] $V\in C^5(\R^d)$;

\item[(A-2)] the set of critical points 
\be\label{eq:criticalset}
\EE := \{x\in \R^d, \nabla V(x) = 0\}
\en is finite and the Hessian $D^2 V(x)$ is non-degenerate for any $x \in \EE$. 

\item[(A-3)] coercivity condition:
\be\label{eq:coer}
\exists R>0 \text{ such that } \inf_{|x| > R} |\nabla V(x)| > 0;
\en

\item[ (A-4)] growth condition: 
\be\label{eq:G}
\bea
\exists C_1, C_2 > 0 \text{ and } \alpha \in [0, 2) \text{ such that for all } x\in \R^d \text{ and } 1 \leq i,j,k \leq d,\\
 \limsup_{\eps\gt 0} \max\left(\big|\frac{\partial^3}{\partial x_{i} \partial x_{j} \partial x_{k}}\Psi_\eps(x)\big|, |\Psi_\eps(x)| \right)\leq C_1 e^{C_2|x|^\alpha};
\ena
\en

\item[(A-5)] $V(x) \gt \infty$ when $|x| \gt \infty$ and there exits $R > 0$ such that 
\be\label{eq:VP}
 2\Delta V(x) \leq  |\nabla V (x)|^2 \text{ for } |x| \geq R;
 \en

\item[(A-6)] monotonicity condition:
\be\label{eq:MON}
\exists R > 0 \text{ such that } |\nabla V(x_1)| \geq  |\nabla V(x_2)| \text{ if } |x_1| \geq |x_2| \geq R.
\en
\end{itemize}

\end{assumptions}
\begin{rem}\label{rem:cond}
\begin{enumerate}
\item[(i)] Conditions (A-2)-(A-3) are typical assumptions for
proving $\Gamma$-convergence results for Ginzburg-Landau and related
functionals \cite{FT89,L14}. The smoothness condition (A-1) is needed because our analysis involves a Taylor expansion of order three for $\Psi_\eps$. Furthermore, we will use conditions (A-4)-(A-6) to analyze the $\Gamma$-convergence problem in this paper. These assumptions will be employed to simplify the expectation term in the Kullback-Leibler divergence (see the expression \eqref{eq:kld2}).
\item[(ii)] The condition (A-5) is a Lyapunov type condition which guarantees that at small temperature ($\eps \leq 1$) the solution to the SDE in \eqref{eq:LSDE2} does not explode in finite time. The probability measure determined by this process is absolutely continuous with respect to the reference measure of the Brownian bridge. See \cite[Chapter 2]{R07} for more discussions about the absence of explosion. Moreover, by the definition of $\Psi_\eps$, (A-5) implies that for any $\delta \in \R$ there exists a constant $C> 0$ depending only $R$ and $\delta$ such that
\be\label{eq:psilb}
|\nabla V(x)|^2 - \eps \delta  \Delta V(x) \geq -C\eps \text{ for any } x\in \R^d.
\en
Such lower bound will be used to prove the compactness of the functionals of interest (see Proposition \ref{prop:compact-2}).
\item[(iii)] These conditions are not independent. For instance, the coercivity condition (A-3) can be deduced from the monotonicity condition (A-6) when $V(x)$ is non-constant for large $|x|$. Hence particularly (A-5) and (A-6) imply (A-3).
\item[(iv)] The set of functions satisfying conditions (A-1)-(A-7) is not empty: they are fulfilled by all polynomials. Therefore many classical potentials, such as the Ginzburg-Landau double-well potential $V(x) = \frac{1}{4} x^2 (1 -x)^2$ are included. $\qed$
\end{enumerate}
\end{rem}

For $\eps > 0$ we denote by $\mu_\eps$ the law of the above bridge process $x$ defined in \eqref{eq:LSDE2} and $\mu_0$ the law of the corresponding bridge for vanishing drift ($V = 0$) in \eqref{eq:LSDE2}. Then, by identical arguments to those yielding \eqref{eq:mumu},  $\mu_\eps$ is absolutely continuous with respect to $\mu_0$ and the Radon-Nikodym density is given by
\be\label{eq:mu}
\frac{d \mu_\eps}{d \mu_0} (x) = \frac{1}{Z_{\mu,\eps}} \exp \left(-\frac{1}{2\eps^2} \int_0^1 \Psi_\eps(x(t)) dt \right)
\en
where $\Psi_\eps$ is given by \eqref{eq:psi} and $Z_{\mu,\eps}$ is the normalization constant. Note that the extra factor $\frac{1}{\eps}$ with respect to \eqref{eq:mumu} is due to the time rescaling.

\subsection{Notation}\label{subsection:notation} Throughout the
paper, we use $C$ (or occasionally $C_1$ and $C_2$) to denote a generic positive constant which may change from one expression to the next and is independent of the temperature and any quantity of interest. We write $A \lesssim B$ if $A \leq CB$. Given an interval $I\subset \R$, let $L^p(I)$ and $W^{m, p}(I)$ with $ m\in \mathbf{N}, 1\leq p \leq \infty$ be the standard Lebesgue and Sobolev spaces of scalar functions respectively. Let $H^{m}(I) = W^{m, 2}(I)$. 
For $s\in [0,1]$, we set $H_0^s(I)$ to be the closure of $C_0^\infty(I)$ in $H^s(I)$ and equip it with the topology induced by $H^s(I)$. Define its dual space $H^{-s}(I) := (H^{s}_0(I))^\prime$. For $s > 1/2$, a function of $H^s_0(I)$ has zero boundary conditions.  Thanks to the Poincar\'e inequality, the $H^1$-semi-norm is an equivalent norm on $H^1_0(I)$. In the case that $I = (0,1)$, we simplify
the notations by setting $H^s_0 = H^s_0(0,1)$ and $H^{-s} = H^{-s}(0,1)$. 

We write scalar and vector variables in regular face whereas matrix-valued variables, function spaces for vectors and matrices are written in boldface.  Denote by $\mathcal{S}(d,\R)$ the set of all real symmetric $d\times d$ matrices and by $\bI_d$ the 
identity matrix of size $d$. 
Let $L^p(0,1; \R^d)$ and $L^p(0,1; \mathcal{S}(d,\R))$ be the spaces of vector-valued and symmetric matrix-valued functions with entries in $L^p(0,1)$ respectively. Similarly one can define $H^1(0,1; \R^d), H_0^s(0,1;\R^d)$ and $H^1(0,1; \mathcal{S}(d,\R))$. For simplicity, we use the same notation $\bL^p(0,1)$ (resp. $\bH^1(0,1)$) to denote $L^p(0,1; \mathcal{S}(d,\R))$ and $L^p(0,1; \R^d)$(resp. $H^1(0,1; \mathcal{S}(d,\R))$ and $H^1(0,1; \R^d)$).
For any $\bA = (A_{ij})\in L^p(0,1; \mathcal{S}(d,\R))$ with $1\leq p \leq \infty$, we define its norm
$$
\|\bA\|_{\bL^p(0,1)} := \left(\sum_{i=1}^d \sum_{j=1}^d \|A_{ij}\|^2_{L^p(0,1)}\right)^{\frac{1}{2}}.
$$
For $\bA = (A_{ij})\in H^1(0,1; \mathcal{S}(d,\R))$, the norm is defined by
$$
\|\bA\|_{\bH^1(0,1)} := \left(\sum_{i=1}^d \sum_{j=1}^d \|A_{ij}\|^2_{H^1(0,1)}\right)^{\frac{1}{2}}.
$$
 We also define $\bH^1_\pm(0, 1) := H^1_{\pm}(0,1; \R^d) := \{x\in H^1(0,1;R^d): x(0) = x_-, x(1) = x_+\}$. Denote by $\BV(I)$ the set of $\R^d$-valued functions of bounded variations on an interval $I \subset \R$.

 For matrices $\bA, \bB\in \mathcal{S}(d, \R)$ we write $\bA \geq \bB$ when $\bA - \bB$ is positive semi-definite. The trace of a matrix $\bA$ is denoted 
by $\Tr(\bA)$. Denote by $\bA^T$ the transpose of $\bA$ and by $|\bA|_F$ the Frobenius norm of $\bA$. Given $\bA\in \mathcal{S}(d, \R)$ with the diagonalized form $\bA = \bP^T \bLambda \bP$, we define the matrix matrix $|\bA| := \bP^T |\bLambda| \bP$. For matrices $\bA = (A_{ij})$ and $\bB = (B_{ij})$, we write
 $$\bA : \bB = \Tr(\bA \bB^T) = \sum_{i=1}^d\sum_{j=1}^d A_{ij} B_{ij}.$$
Define the matrix-valued operator $\bD := \partial_t^2 \cdot \bI_d$.
 For $a > 0$, we define
$$
\bL^1_a(0,1):= L^1_a(0,1;\mathcal{S}(d, \R)) = \left\{\bA\in L^1(0,1; \mathcal{S}(d,\R)): \bA(t) \geq a\cdot \bI_d\ a.e. \text{ on } (0,1)\right\}
$$
and 
$$
\bH^1_a(0,1) := H^1_a(0,1;\mathcal{S}(d, \R)) = \left\{\bA\in H^1(0,1; \mathcal{S}(d,\R)): \bA(t) \geq a\cdot \bI_d\ a.e. \text{ on } (0,1)\right\}
.$$
We write $\bA_n \wgt \bA$ in $\bL^1(0,1)$ when $\bA_n$ converges to $\bA$ weakly in $\bL^1(0,1)$. Let $\bH^1_0(0,1) = H^1_0(0,1;\R^d)$. Define
$\bH_0^s = \overbrace{H_0^s\times \cdots \times H_0^s}^{d}$
and let $\bH^{-s}$ be the dual. In addition, we define product
spaces $\HH := \bH^1_{\pm}(0,1) \times \bH^1(0,1), \HH_a := \bH^1_{\pm}(0,1) \times \bH^1_a(0,1), \mathcal{X} := \bL^1(0,1) \times \bL^1(0,1)$ and $\mathcal{X}_a := \bL^1(0,1) \times \bL^1_a(0,1)$.

  For a vector field $v = (v_1, v_2, \cdots, v_d)$, let $\nabla v = (\partial_i v_j)_{i,j=1,2,\cdots, d}$ be its gradient, which is a second order tensor (or matrix). Given a potential $V:\R^d \gt \R$, denote by $D^2 V$ the Hessian of $V$. Given a second order tensor $\mathbf{T} = (T_{ij})_{i,j=1,2,\cdots, d}$, we denote by $\nabla \mathbf{T}$ its gradient, which is a rank 3 tensor with $(\nabla \mathbf{T})_{ijk} = \frac{\partial \mathbf{T}_{ij}}{\partial x_k}$. In particular, we use $D^3 V$ to denote the gradient of the Hessian $D^2 V$. 

Finally we write
$\nu \ll \mu$ when the measure $\nu$ is absolutely continuous with respect to $\mu$ and write $\nu \perp \mu$
when they are singular. Throughout the paper, we denote by $N(m, \bSig)$ the Gaussian measure on $\bL^2(0,1)$ with mean $m$  
and covariance operator $\bSig$. Moreover, the Gaussian measures 
considered in the paper will always have the property that, almost surely,
draws from the measure are continous functions on $[0,1]$ and thus that
point-wise evaluation is well-defined. Given $h\in \bL^2(0,1)$, define the translation map $\mathcal{T}_h$ by setting $\mathcal{T}_h x = x + h$ for any $x\in \bL^2(0,1)$. Denote by $\mathcal{T}_h^\ast \mu$ the push-forward measure of a measure $\mu$ on $\bL^2(0,1)$ under the map $\mathcal{T}_h$.

\section{Kullback-Leibler Minimization}\label{sec:kl}

\subsection{Parametrization of Gaussian Measures}
In this subsection, we describe  the parametrization of the 
Gaussian measures that we use in our Kullback-Leibler minimization. 
To motivate our choice of parameterization we consider the SDE
\eqref{eq:LSDE2}.
This equation has order-one noise, but with a strong gradient-form 
drift which will, most of the time, constrain the sample path to 
the neighbourhood of critical points of $V$. The size of the
neighbourhood will be defined by small fluctuations whose size 
scales with $\eps^{\frac12}$. To capture this behaviour 
we seek an approximation 
to \eqref{eq:LSDE2} of the form $x = m + z$, where $m$ is a path connecting
$x_{\pm}$ in unit time and where $z$ describes the small fluctuations.
We aim to find $m$ from an appropriate class of functions, and $z$ as
 time-inhomogenous Ornstein-Uhlenbeck process
\be\label{eq:ou-1}
\begin{aligned}
d z(t) & = -\eps^{-1}\bA(t) z(t)dt + \sqrt{2}d W(t),\\
 z(0) &= z(1) = 0.
\end{aligned}
\en
The time-dependent functions $(m,\bA)$ become our unknowns. For subsequent discussions, we require
$m\in  \bH^1_\pm(0, 1).$ 
For $\bA$ we assume that 
$\bA \in \bH^1(0,1)$, i.e. $\bA\in H^1(0,1;\R^{d\times d})$ and  $\bA(t)$ is symmetric for any $t\in (0,1)$. The symmetry property will simplify the calculation of the 
change of measures 
below, and will also be helpful in estimating the Green€™s functions used to show the $\Gamma$-convergence in Section \ref{sec:gammalim}.

Let $\overline{\nu}_\eps$ be the distribution of the process $z$ defined by \eqref{eq:ou-1} and let $\overline{\mu}_0$ be the corresponding Brownian bridge (with $\bA = 0$).
The lemma below shows that $\overline{\nu}_\eps$ is a centred Gaussian with the covariance operator given by the inverse Schr\"odinger 
operator $\bSig_\eps := 2(-\bD + \bB_\eps)^{-1}$ with $\bB_\eps = \eps^{-2}\bA^2 - \eps^{-1}\bA^\prime$. Here $2(-\bD + \bB_\eps)^{-1}$ denotes the inverse of the Schr\"odinger oprator $\frac{1}{2} (-\bD + \bB_\eps)$ with Dirichlet boundary condition. 
Let $\bM_\eps(t; s)$ be the fundamental matrix satisfying
\be\label{eq:M}
\frac{d}{d t} \bM_\eps(t, s) = -\eps^{-1} \bA(t) \bM_\eps(t, s), \quad \bM_
\eps(s,s) = \bI_d.
\en
\begin{lem}\label{lem:RNder0}
Let $A\in \bH^1(0,1)$. Then the Radon-Nikodym density of $\overline{\nu}_\eps$ with respect to  $\overline{\mu}_0$ is given by
\be\label{eq:RNder0}
\frac{d\overline{\nu}_\eps}{d\overline{\mu}_0} (z)  = \frac{1}{Z_{\overline{\nu},\eps}} \exp\left(-\frac{1}{4}\int_0^1 z(t)^T \bB_\eps(t) z(t) dt \right) 
\en
where $\bB_\eps = \eps^{-2} \bA^2 - \eps^{-1} \bA^\prime$ and the normalization constant
\be\label{eq:norm1}
Z_{\overline{\nu}, \eps} =\exp\left(-\frac{1}{2\eps} \int_0^1 \Tr(\bA(t)) dt\right)\cdot \left(\int_0^1 \overline{\bM}_\eps(t)\overline{\bM}^T_\eps(t)dt\right)^{-1/2},
\en
where $\overline{\bM}_\eps(t) = \bM_\eps(1, t)$.
 It follows that $\overline{\nu}_\eps = N(0, 2(-\bD + \bB_\eps)^{-1})$.
\end{lem}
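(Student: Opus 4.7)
The plan is to exploit Girsanov's theorem for the unconditioned processes, eliminate the stochastic integral by an Itô identity that trivialises on bridges, and then pass to the conditioned (bridge) laws by a disintegration argument. Concretely, let $\mathbb{Q}^{OU}_\eps$ denote the law on $C([0,1];\R^d)$ of the unconditioned OU process defined by \eqref{eq:ou-1} but with the boundary condition $z(1)=0$ removed (keeping $z(0)=0$), and let $\mathbb{Q}^{BM}$ denote the law of $\sqrt{2}W$ starting at $0$. Since the diffusion coefficient is the constant matrix $\sqrt{2}\bI_d$, Novikov/Girsanov (using the linear growth of the drift $-\eps^{-1}\bA(t)z$ and $\bA\in \bH^1$) gives
\begin{equation*}
\frac{d\mathbb{Q}^{OU}_\eps}{d\mathbb{Q}^{BM}}(z) = \exp\!\Bigl(-\tfrac{\eps^{-1}}{2}\!\int_0^1 z(t)^T\bA(t)\,dz(t) - \tfrac{\eps^{-2}}{4}\!\int_0^1 z(t)^T\bA(t)^2 z(t)\,dt\Bigr).
\end{equation*}

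Next I would apply Itô's formula to the smooth quadratic functional $f(t,z)=\tfrac{1}{2}z^T\bA(t)z$. Using that $\bA$ is symmetric, $\nabla_z f = \bA z$ and $D^2_z f = \bA$, and that the quadratic variation of $z$ under $\mathbb{Q}^{BM}$ is $2\bI_d\, dt$, so
\begin{equation*}
\int_0^1 z(t)^T\bA(t)\,dz(t) \;=\; \tfrac{1}{2}z(1)^T\bA(1)z(1) - \tfrac{1}{2}\!\int_0^1 z^T\bA'(t)z\,dt - \int_0^1 \Tr(\bA(t))\,dt,
\end{equation*}
using $z(0)=0$. Substituting back and recalling $\bB_\eps = \eps^{-2}\bA^2 - \eps^{-1}\bA'$, the Girsanov density rewrites as
\begin{equation*}
\frac{d\mathbb{Q}^{OU}_\eps}{d\mathbb{Q}^{BM}}(z) = \exp\!\Bigl(-\tfrac{\eps^{-1}}{4}z(1)^T\bA(1)z(1) + \tfrac{\eps^{-1}}{2}\!\int_0^1 \Tr(\bA)\,dt -\tfrac{1}{4}\!\int_0^1 z^T\bB_\eps z\,dt\Bigr).
\end{equation*}

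Conditioning both measures on $\{z(1)=0\}$ produces, by definition, $\overline{\nu}_\eps$ and $\overline{\mu}_0$ respectively. The standard disintegration identity for equivalent measures yields
\begin{equation*}
\frac{d\overline{\nu}_\eps}{d\overline{\mu}_0}(z) = \frac{p^{BM}_{z(1)}(0)}{p^{OU}_{z(1)}(0)}\cdot\frac{d\mathbb{Q}^{OU}_\eps}{d\mathbb{Q}^{BM}}(z)\Big|_{z(1)=0},
\end{equation*}
where the ratio of marginal densities at $0$ is deterministic and absorbs into the normalisation constant. The boundary term $z(1)^T\bA(1)z(1)$ vanishes on the conditioning event, leaving exactly the quadratic functional of $\bB_\eps$ announced in \eqref{eq:RNder0}. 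To evaluate the prefactor, observe that under $\mathbb{Q}^{BM}$ the endpoint satisfies $z(1)\sim N(0,2\bI_d)$, while under $\mathbb{Q}^{OU}_\eps$ the variation-of-constants formula $z(1)=\sqrt{2}\int_0^1 \bM_\eps(1,s)\,dW(s)$ gives $z(1)\sim N\!\bigl(0,\,2\int_0^1 \overline{\bM}_\eps(s)\overline{\bM}_\eps(s)^T ds\bigr)$; taking the ratio of Gaussian densities at $0$ produces the determinant factor in \eqref{eq:norm1}, while the $\Tr(\bA)$ factor in $Z_{\overline{\nu},\eps}$ comes directly from the Itô correction above.

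Finally, to identify $\overline{\nu}_\eps$ as $N(0,2(-\bD+\bB_\eps)^{-1})$, I would use that $\overline{\mu}_0 = N(0,2(-\bD)^{-1})$ (the standard Brownian bridge representation), so that $-\tfrac{1}{4}\int z^T\bB_\eps z\,dt$ amounts to multiplying the formal Radon-Nikodym density $\exp(-\tfrac{1}{4}\langle z,-\bD z\rangle)$ by $\exp(-\tfrac{1}{4}\langle z,\bB_\eps z\rangle)$, changing the precision operator to $\tfrac{1}{2}(-\bD+\bB_\eps)$ and the covariance to $2(-\bD+\bB_\eps)^{-1}$; this can be made rigorous via the Feldman-Hájek/Cameron-Martin theorem for Gaussian measures once one checks that $\bB_\eps\in L^\infty(0,1)$ implies that the perturbation is trace-class on the Cameron-Martin space $\bH^1_0$. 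The main delicate point I expect is the disintegration step: one must justify that the joint law decomposition is valid so that dividing by the ratio of endpoint densities yields a bona fide Radon-Nikodym derivative between the two bridge measures, which requires the conditional laws to be regular in $z(1)$, ultimately guaranteed by the continuity of the Gaussian densities appearing there.
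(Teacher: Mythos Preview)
Your proposal is essentially the paper's own proof: Girsanov on the unconditioned processes, It\^o's formula on $\tfrac12 z^T\bA(t)z$ to remove the stochastic integral, then condition on $z(1)=0$ and compute the normalisation by comparing the endpoint densities (the paper phrases the latter as computing $\rho_1(0)$ in two ways, which is exactly your disintegration ratio $p^{BM}(0)/p^{OU}(0)$). One small slip: you assert $\bB_\eps\in L^\infty(0,1)$, but since $\bA\in\bH^1(0,1)$ only gives $\bA'\in\bL^2$, in general $\bB_\eps=\eps^{-2}\bA^2-\eps^{-1}\bA'$ is merely $\bL^2$; the identification of $\overline{\nu}_\eps$ with $N(0,2(-\bD+\bB_\eps)^{-1})$ therefore needs the $\bL^2$-potential Schr\"odinger estimate (the paper's Lemma~\ref{lem:schodinger}, or equivalently the argument behind \cite[Lemma~C.1]{PSSW15a}) rather than a bare $L^\infty$ trace-class check.
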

\begin{proof}
Let $z$ be the unconditioned Ornstein-Uhlenbeck process that satisfies
\be\label{eq:norconstsde}
d z(t) = -\eps^{-1} \bA(t) z(t) dt  + \sqrt{2}d W(t), \quad z(0) = 0.
\en
 Denote by $\widetilde{\nu}_\eps$ the law of $z(t),t\in [0,1]$ solving
\eqref{eq:norconstsde} 
and by $\widetilde{\mu}_0$ the law of the process $\sqrt{2}W(t)$. 
 It follows from Girsanov's theorem that
 \be
\frac{d \widetilde{\nu}_\eps}{d \widetilde{\mu}_0}(z) = \exp\left(-\frac{1}{2\eps}\int_0^1 \bA(t)z(t)\cdot dz(t) - \frac{1}{4\eps^2}\int_0^1 |\bA(t) z(t)|^2 dt \right).
\en
Simplifying the exponent on the right side of the
above by It\^o's formula gives
\be\label{eq:density1}
\frac{d \widetilde{\nu}_\eps}{d \widetilde{\mu}_0}(z) = \exp\left(-\frac{1}{4}\int_0^1 z(t)^T \bB_\eps(t) z(t) dt + \frac{1}{2\eps}\int_0^1 \Tr(\bA(t))dt - \frac{1}{4\eps}z(1)^T \bA(1) z(1)\right). 
\en
After conditioning on $z(1) = 0$ and using \cite[Lemma 5.3]{HSV07}, \eqref{eq:RNder0} follows from \eqref{eq:density1}. We now calculate the normalization constant $Z_{\overline{\nu},\eps}$. Let $\rho_1$ be the density of the distribution of $z(1)$ under the measure $\widetilde{\nu}_\eps$. Let $\widetilde{\mu}_y$ be law of the conditioned process $ (\sqrt{2}W(t) | \sqrt{2}W(1) = y)$. From  \eqref{eq:density1}, one can see that for any bounded measurable function $f:\R^d \gt \R$, 
\be\bea\label{eq:densityrho}
& \E^{\rho_1}[f(z(1))]= \E^{\widetilde{\nu}_\eps}[f(z(1))]\\
 & = \E^{\widetilde{\mu}_0} \bigg[ \exp\left(-\frac{1}{4}\int_0^1 z(t)^T \bB_\eps(t) z(t) dt \right)  \\
 & \qquad \times \exp\left(\frac{1}{2\eps}\int_0^1 \Tr(\bA(t))dt - \frac{1}{4\eps}z(1)^T \bA(1) z(1)\right) f(z(1)) \bigg]\\
 & = \frac{1}{(4\pi)^{d/2}}\int_{\R^d} \exp\left(\frac{1}{2\eps}\int_0^1 \Tr(\bA(t))dt - \frac{1}{4\eps}y^T \bA(1) y - \frac{1}{4}|y|^2\right) f(y) 
\\ & \qquad \times \E^{\widetilde{\mu}_y} \left[\exp\left(-\frac{1}{4}\int_0^1 z(t)^T \bB_\eps(t) z(t) dt \right)\right] dy,
\ena
\en
where we have used the fact that $z(1) \sim N(0, 2\cdot\bI_d)$ when $z$ is distributed according to $\widetilde{\mu}_0$.
 Then we can read from \eqref{eq:densityrho} that 
\be\label{eq:lambda1}
\rho_1(0) =  \E^{\overline{\mu}_0} \Big[ \exp\Big( - \frac{1}{4}\int_0^1 z(t)^T \bB_\eps(t) z(t) \, dt \Big) \Big]  \exp\Big( \frac{1}{2\eps}  \int_0^1 \Tr(\bA(t)) \, dt \Big) \frac{1}{(4\pi)^{d/2}}.
\en

On the other hand, we know from Appendix \ref{appendix-B} that the solution $z(t)$ of \eqref{eq:norconstsde} can be represented as
$$
z(t) = \sqrt{2} \int_0^t \bM_\eps(t, s) dW(s),
$$
where $\bM_\eps$ is the fundamental matrix (see Definition \ref{def:ch5fundm}). In particular, by It\^o's isometry the random variable $z(1)$ is a centred Gaussian with covariance 
$$
\E[z(1) z(1)^T] = 2\int_0^1 \overline{\bM}_\eps(t) \overline{\bM}_\eps(t)^T dt,
$$
where $\overline{\bM}_\eps(t) = \bM_\eps(1, t)$.
Therefore we obtain an alternative expression for $\rho_1$, namely
 \be\label{eq:lambda2}
\rho_1(0) =  \frac{1}{(4 \pi)^{d/2}} \left[\det \Big( \int_0^1 \overline{\bM}_\eps(t) \overline{\bM}_\eps(t)^T \,dt \Big)\right]^{-\frac12}.
\en
Comparing the expressions \eqref{eq:lambda1} and \eqref{eq:lambda2} yields \eqref{eq:norm1}.
%
%
Finally, by the same arguments used in the proof of \cite[Lemma C.1]{PSSW15a}, one can see that $\overline{\nu}_\eps = N(0, 2(-\bD + \bB_\eps)^{-1})$.
\end{proof}

We remark that the covariance operator $\bSig_\eps = 2(-\bD + \bB_\eps)^{-1}$ is bounded from $\bL^2(0,1)$ to $\bH^{2}(0,1)$ and is trace-class on $\bL^2(0,1)$; see Lemma~\ref{lem:schodinger} and Remark~\ref{rem:schodinger}. 
The sample paths $z$ are almost surely continuous and the covariances are given by
\be\label{eq:greenexp}
\E^{\overline{\nu}_\eps}[z(t) z(s)^T] = 2\bG_\eps(t,s), \quad t,s \in [0,1].
\en
Here $\bG_\eps(t, s)$ is the Green's tensor (fundamental matrix) of the elliptic operator $(-\bD + \bB_\eps)$ under Dirichlet boundary conditions, i.e. for any $s \in (0,1)$,
 \be\label{eq:grt}
 \bea
& \left(-\bD + \eps^{-2}\bA^2(\cdot) - \eps^{-1} \bA^\prime(\cdot)\right) \bG_\eps(\cdot, s) = \delta(\cdot - s)\cdot\bI_d,\\
& \bG_\eps(0, s) = \bG_\eps(1, s) =  0.
\ena
 \en

With a description of the centered fluctuation process $z$ in hand we now move on to discuss the non-centered process
 $x = m + z$, whose law is denoted by $\nu_\eps$. It is clear that $\nu_\eps = N(m, \bSig_\eps)$.
Because of \eqref{eq:ou-1}, $\nu_\eps$ can also be viewed as the law of the following conditioned Ornstein-Uhlenbeck process
\be\label{ou-2}
\begin{aligned}
& dx(t) = \left(m^\prime(t) - \eps^{-1}\bA(t)\big(x(t) - m (t)\big) \right)d t + \sqrt{2} d W(t), \\
& x(0) = x_-, \quad x(1) = x_+.
\end{aligned}
\en
 Hence the Gaussian measure $\nu_\eps$ is
parametrized by the pair of functions $(m, \bA)$. To conclude, recalling the space $\HH_a = \bH^1_{\pm}(0,1) \times \bH^1_a(0,1)$, we define the family of Gaussian measures as
\be\label{eq:GM1}
\A =  \Big\{N\left(m, 2(-\bD + \bB_\eps)^{-1}\right) : (m, \bA)\in \HH\Big\}
\en
where $\bB_\eps = \eps^{-2} \bA^2 - \eps^{-1} \bA^\prime$. For $a > 0$, we denote by $\A_a$ the set of Gaussian measures defined in the same way as \eqref{eq:GM1} but with $\HH$ replaced by $\HH_a$.

\subsection{Calculations of Kullback-Leibler divergence}
To quantify the closeness of probability
measures, we use the Kullback-Leibler divergence, or relative entropy.
Given two probability measures $\nu$ and $\mu$, with $\nu$ absolutely
continuous with respect to $\mu$, the Kullback-Leibler divergence of $\nu$ and $\mu$ is
$$D_{\text{KL}}(\nu || \mu) = \E^{\nu}\log\left(\frac{d\nu}{d\mu}\right)$$
where $\E^\nu$ denotes the expectation taken with respect to the measure $\nu$;
if $\nu$ is not absolutely continuous with respect to $\mu$, 
then the Kullback-Leibler divergence is defined as $+\infty$. Sometimes it is
convenient to evaluate the Kullback-Leibler divergence through a reference measure $\mu_0$. If the measures $\mu, \nu$ and $\mu_0$ are mutually equivalent, then the Kullback-Leibler divergence can be expressed as
\be
\label{eq:Klg}
D_{\text{KL}}(\nu || \mu) 
= \E^\nu \log\left(\frac{d\nu}{d\mu_0}\right) - \E^\nu \log\left(\frac{d\mu}{d\mu_0}\right).
\en
In this section, we calculate the Kullback-Leibler divergence between the non-Gaussian measure $\mu_\eps$ (defined by \eqref{eq:mu})  and the parametrized Gaussian measure $\nu_\eps = N(m, \bSig_\eps)$. Recall that $\overline{\nu}_\eps$ is the law of the time-inhomogeneous Ornstein-Uhlenbeck process \eqref{eq:ou-1}.  Recall also that $\mu_0$ is the law of the Brownian bridge process corresponding to vanishing drift
in the SDE \eqref{eq:LSDE2}. It is clear that $\mu_0 = N(m_0, 2(-\bD)^{-1})$ with $m_0(t) = x_- (1 - t) + x_+ t$. In order to evaluate the above Kullback-Leibler divergence by using \eqref{eq:Klg}, we need to calculate the Radon-Nikodym derivative $d\nu_\eps / d\mu_0$.  


\begin{lem}\label{lem:RNder1}
Let $m\in \bH^1_{\pm}(0,1)$ and $\bA\in \bH^1(0,1)$. Then the Radon-Nikodym density of $\nu_\eps$ with respect to  $\mu_0$ is given by
\be\label{RNder1}
\frac{d\nu_\eps}{d\mu_0} (x)  = \frac{1}{Z_{\nu,\eps}} \exp\left(-\Phi_{\nu,\eps}(x)\right)
\en
where 
\be
\bea
\Phi_{\nu,\eps}(x) & = \frac{1}{4}\int_0^1 (x(t) - m(t))^T \bB_\eps(t) (x(t) - m(t)) d t \\
& - \frac{1}{2}\int_0^1 m^{\prime}(t)\cdot dx(t) + \frac{1}{4}\int_0^1 |m^\prime (t)|^2 dt.
\ena
\en
and the normalization constant
\be\label{eq:norm2}
Z_{\nu, \eps} = \exp\left(\frac{|x_1 - x_0|^2}{4}\right) \cdot \exp\left(-\frac{1}{2\eps} \int_0^1 \Tr(\bA(t)) dt\right)\cdot \left(\int_0^1 \overline{\bM}_\eps(t)\overline{\bM}^T_\eps(t)dt\right)^{-1/2},
\en
where $\overline{\bM}_\eps(t) = \bM_\eps(1, t)$.
\end{lem}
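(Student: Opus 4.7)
My plan is to parallel the proof of Lemma \ref{lem:RNder0}, accommodating the two new features in \eqref{ou-2}: the deterministic drift $m'(t)$ and the nontrivial terminal condition $x(1)=x_+$. First I would pass to unconditioned measures $\widetilde{\nu}_\eps$ and $\widetilde{\mu}_0$, defined respectively as the law of the solution to \eqref{ou-2} without the terminal constraint, and as the law of $x_- + \sqrt{2}W(t)$. Girsanov's theorem applied with drift $b(x,t) = m'(t) - \eps^{-1}\bA(t)(x(t)-m(t))$ gives
\begin{equation*}
\frac{d\widetilde{\nu}_\eps}{d\widetilde{\mu}_0}(x) = \exp\Bigl(\tfrac{1}{2}\int_0^1 b\cdot dx - \tfrac{1}{4}\int_0^1 |b|^2\,dt\Bigr).
\end{equation*}

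The crux is to massage this exponent into $-\Phi_{\nu,\eps}(x)$ plus terms that are either deterministic or vanish after conditioning. Setting $z=x-m$ (so that $dz=dx-m'\,dt$), a direct expansion of $|b|^2$ and of $b\cdot dx$ lets the cross terms involving $m'$ and $\bA z$ collapse, reducing the $\eps^{-1}$-dependent contribution to $-\tfrac{\eps^{-1}}{2}\int z^T\bA\,dz - \tfrac{\eps^{-2}}{4}\int z^T\bA^2 z\,dt$. This is precisely the expression treated in Lemma \ref{lem:RNder0}: applying It\^o's formula to $z(t)^T\bA(t)z(t)$ (using $d\langle z_i,z_j\rangle = 2\delta_{ij}\,dt$ and $\bA\in\bH^1$) eliminates the stochastic integral and, with $\bB_\eps = \eps^{-2}\bA^2-\eps^{-1}\bA'$, yields
\begin{equation*}
\frac{d\widetilde{\nu}_\eps}{d\widetilde{\mu}_0}(x) = e^{-\Phi_{\nu,\eps}(x)}\cdot\exp\Bigl(\tfrac{1}{2\eps}\int_0^1 \Tr(\bA)\,dt - \tfrac{1}{4\eps}(x(1)-x_+)^T\bA(1)(x(1)-x_+)\Bigr),
\end{equation*}
where the boundary term at $t=0$ drops out because $z(0)=0$.

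I would then condition both measures on $x(1)=x_+$ via the disintegration formula [HSV07, Lemma 5.3] already invoked in Lemma \ref{lem:RNder0}. On this event $z(1)=0$, so the quadratic boundary term vanishes, and the $\Tr(\bA)$ prefactor is deterministic and absorbs into $Z_{\nu,\eps}$. This gives $d\nu_\eps/d\mu_0 = Z_{\nu,\eps}^{-1}\,e^{-\Phi_{\nu,\eps}}$ with $Z_{\nu,\eps}$ equal to $\exp(\tfrac{1}{2\eps}\int_0^1\Tr(\bA)\,dt)$ times the ratio of the marginal densities of $x(1)$ at $x_+$ under $\widetilde{\nu}_\eps$ and $\widetilde{\mu}_0$. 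Under $\widetilde{\mu}_0$, one has $x(1)\sim N(x_-,2\bI_d)$, producing the factor $\exp(|x_+-x_-|^2/4)$; under $\widetilde{\nu}_\eps$, the shift $z=x-m$ solves the centered OU equation, so by the variation-of-constants formula from Appendix \ref{appendix-B} we have $z(1)=\sqrt{2}\int_0^1\overline{\bM}_\eps(t)\,dW(t)\sim N\!\bigl(0,\,2\int_0^1\overline{\bM}_\eps\overline{\bM}_\eps^T\,dt\bigr)$, which contributes the determinant factor in \eqref{eq:norm2}. Combining the three factors gives the stated form of $Z_{\nu,\eps}$.

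The only non-routine step is the It\^o's formula manipulation: one must verify that, despite $m\not\equiv 0$ and $x_+\neq 0$, the same cancellation that works in the centered case of Lemma \ref{lem:RNder0} still produces a boundary term at $t=1$ that vanishes \emph{exactly} on paths satisfying the terminal conditioning. Once this is in place, the remaining ingredients — Girsanov, disintegration, and the two Gaussian marginal density computations — are entirely standard.
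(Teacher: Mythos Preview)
Your argument is correct and reaches the same conclusion, but by a genuinely different route from the paper. The paper does not repeat the Girsanov/It\^o computation for the noncentered process. Instead it writes $\nu_\eps = \mathcal{T}_m^\ast\overline{\nu}_\eps$ and $\mu_0 = \mathcal{T}_{m_0}^\ast\overline{\mu}_0$ and factors
\[
\frac{d\nu_\eps}{d\mu_0}(x) \;=\; \frac{d\mathcal{T}_m^\ast\overline{\nu}_\eps}{d\mathcal{T}_m^\ast\overline{\mu}_0}(x)\cdot\frac{d\mathcal{T}_m^\ast\overline{\mu}_0}{d\mathcal{T}_{m_0}^\ast\overline{\mu}_0}(x).
\]
The first factor is simply $\frac{d\overline{\nu}_\eps}{d\overline{\mu}_0}(x-m)$, so Lemma~\ref{lem:RNder0} is invoked as a black box; the second is the Cameron--Martin formula for shifting a Brownian bridge by $m-m_0\in\bH^1_0(0,1)$. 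The normalization constant then falls out immediately as $Z_{\nu,\eps}=Z_{\overline{\nu},\eps}\cdot\exp\bigl(|x_+-x_-|^2/4\bigr)$, with no need to recompute terminal marginal densities. Your approach, by contrast, treats the SDE~\eqref{ou-2} directly and handles the mean shift and the covariance in a single Girsanov step; this is self-contained and perfectly valid, but it essentially redoes the It\^o manipulation already carried out in Lemma~\ref{lem:RNder0}. The paper's decomposition is more modular, cleanly separating the covariance contribution (reused verbatim from Lemma~\ref{lem:RNder0}) from the mean-shift contribution (Cameron--Martin). One small slip in your write-up: the trace prefactor enters $Z_{\nu,\eps}$ with a \emph{minus} sign, $\exp\bigl(-\tfrac{1}{2\eps}\int_0^1\Tr(\bA)\,dt\bigr)$, not plus; your final identification with~\eqref{eq:norm2} is nonetheless correct.
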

\begin{proof}
First by definitions of $\overline{\nu}_\eps$ and $\overline{\mu}_0$, we know that $\nu_\eps = \mathcal{T}_{m}^\ast \overline{\nu}_\eps$ and $\mu_0 = \mathcal{T}_{m_0}^\ast \overline{\mu}_0$. Then we have
\be\label{eq:density2}
\frac{d \nu_\eps}{d \mu_0}(x)  = \frac{d \mathcal{T}_{m}^\ast \overline{\nu}_\eps}{d \mathcal{T}_{m_0}^\ast \overline{\mu}_0}(x) = \frac{d \mathcal{T}_{m}^\ast \overline{\nu}_\eps}{d \mathcal{T}_{m}^\ast \overline{\mu}_0}(x) \cdot \frac{d \mathcal{T}_{m}^\ast \overline{\mu}_0} {d \mathcal{T}_{m_0}^\ast \overline{\mu}_0}(x).
\en
Observe that for any Borel set $A\subset \bL^2(0,1)$, 
$$
\mathcal{T}_{m}^\ast \overline{\nu}_\eps(A) =\overline{\nu}_\eps(A - m) = \E^{\overline{\mu}_0} \left[\frac{d \overline{\nu}_\eps}{d \overline{\mu}_0}(x) \mathbf{1}_{A - m}(x)\right] =
\E^{\mathcal{T}_m^\# \overline{\mu}_0} \left[\frac{d \overline{\nu}_\eps}{d \overline{\mu}_0}(x - m) \mathbf{1}_{A}(x)\right]. 
$$
This together with Lemma \ref{lem:RNder0} implies that
\be\bea\label{eq:density3}
\frac{d \mathcal{T}_{m}^\ast \overline{\nu}_\eps}{d \mathcal{T}_{m}^\ast \overline{\mu}_0}(x) & = \frac{d \overline{\nu}_\eps}{d\overline{\mu}_0}(x - m).\\
& =
 \frac{1}{Z_{\overline{\nu},\eps}} \exp\left(- \frac{1}{4}\int_0^1 (x(t) - m(t))^T \bB_\eps(t) (x(t) - m(t)) dt \right).
\ena
\en
Since $m\in \bH^1_\pm(0,1)$, $m - m_0 \in \bH_0^1(0,1)$ and hence $\mathcal{T}_{m}^\ast \overline{\mu}_0 \ll \mathcal{T}_{m_0}^\ast \overline{\mu}_0$. Furthermore, by the Cameron-Martin formula we have
\be\label{eq:density4}
\bea
\frac{d \mathcal{T}_{m}^\ast \overline{\mu}_0} {d \mathcal{T}_{m_0}^\ast \overline{\mu}_0}(x) & = \exp\Big(\frac{1}{2}\int_0^1\big(m^\prime(t) - m_0^\prime(t)\big)\cdot d \big(x(t) - m(t)\big) \\
& \quad\quad - \frac{1}{4}\int_0^1 |m^\prime(t) - m^\prime_0(t)|^2 dt \Big).
\ena
\en
Recall that $m_0(t) = x_-(1 - t) + x_+ t$. Using the fact that $x(0) = x_-, x(1) = x_+$ when $x$ is distributed according to $\mathcal{T}_m^\ast \overline{\mu}_0$ (or $\mathcal{T}_{m_0}^\ast \overline{\mu}_0$), we can simplify the exponent of above as follows: 
\be\bea\label{eq:density5}
& \frac{1}{2}\int_0^1\big(m^\prime(t) - m_0^\prime(t)\big)\cdot d \big(x(t) - m_0(t)\big) - \frac{1}{4}\int_0^1 |m^\prime(t) - m^\prime_0(t)|^2 dt \\
& = \frac{1}{2} \int_0^1 m^\prime(t) \cdot d x(t) -\frac{1}{4}\int_0^1 |m^\prime(t)|^2 dt - \frac{1}{2}\int_0^1 m_0^\prime(t) \cdot d \big(x(t) - m_0(t)\big)\\
& \quad\quad -\frac{1}{4}\int_0^1 |m_0^\prime(t)|^2 dt\\
& = \frac{1}{2} \int_0^1 m^\prime(t) \cdot d x(t) -\frac{1}{4}\int_0^1 |m^\prime(t)|^2 dt - \frac{|x_+ - x_-|^2}{4}.
\ena
\en
Hence one can obtain \eqref{RNder1} from \eqref{eq:density2}-\eqref{eq:density5} where the normalization constant
$$
Z_{\nu, \eps} = Z_{\overline{\nu},\eps} \cdot \exp\left(\frac{|x_+ - x_-|^2}{4}\right).
$$
This together with \eqref{eq:norm1} implies \eqref{eq:norm2}.
\end{proof}

According to the definition of $\mu_\eps$ (given by \eqref{eq:mu}), Lemma \ref{lem:RNder1} and the expression \eqref{eq:Klg} for the Kullback-Leibler divergence we obtain that  
\be
\begin{aligned}\label{eq:kld}
 D_{\text{KL}} (\nu_\eps || \mu_\eps) = \widetilde{D}_{\text{KL}}(\nu_\eps || \mu_\eps)  -\frac{|x_1 - x_0|^2}{4} + \log(Z_{\mu,\eps}),
\end{aligned}
\en
where  
\be
\bea\label{eq:kld2}
\widetilde{D}_{\text{KL}}(\nu_\eps || \mu_\eps) & = 
\frac{1}{2\eps^2}\E^{\overline{\nu}_\eps} \int_0^1 \Psi_\eps(z(t) + m(t))dt + \frac{1}{4} \int_0^1 |m^\prime (t)|^2 dt  \\
& - \frac{1}{4} \E^{\overline{\nu}_\eps}\left[\int_0^1 z(t)^T \bB_\eps(t) z(t) d t\right] + \frac{1}{2\eps}\int_0^1 \Tr(\bA(t)) dt\\
&  + \frac{1}{2}\log \left(\det\left(\int_0^1 \overline{\bM}_\eps(t)\overline{\bM}_\eps(t)^T dt\right)\right).
\ena
\en 
Here $\overline{\nu}_\eps = N(0, 2(-\bD + \bB_\eps)^{-1})$ and 
$\overline{\bM}_\eps(t) =  \bM_\eps(1,t)$ with $\bM_\eps$ defined by \eqref{eq:M}.
The form of $\widetilde{D}_{\text{KL}}(\nu_\eps || \mu_\eps)$ 
is interesting: the first two terms comprise a ``fattened'' version 
of the Onsager-Machlup functions \eqref{eq:om}, 
where the fattening is characterized by the
entropic fluctuations of the process $z$. 
The remaining terms penalize those entropic contributions. 
This characterization will be particularly clear in the
small noise limit -- see the discussion in Section~\ref{sec:conclusion}.

\subsection{Variational Problem}

Recall the set of Gaussian measures
$$
\A = \Big\{N(m, 2(-\bD + \bB_\eps)^{-1}): (m, \bA) \in \HH\Big\}
$$
where $\bB_\eps  = \eps^{-2}\bA^2 - \eps^{-1} \bA^\prime$ and that the set $\A_a$ is defined in the same way with $\HH$ replaced by $\HH_a$ for some $a > 0$. Given the measure $\mu_\eps$ defined by \eqref{eq:mu}, i.e. the law of transition paths,
we aim to find optimal Gaussian measures $\nu_\eps$ from $\A$ or $\A_a$ minimizing the Kullback-Leibler divergence $D_{\text{KL}}(\nu_\eps|| \mu_\eps)$. To that end, first in view of \eqref{eq:kld}, the constants $\frac{|x_1 - x_0|^2}{4}$ and $\log(Z_{\mu,\eps})$ can be neglected in the minimization process since they do not depend on the choice of $\nu_\eps$. Hence we are only concerned with minimizing the modified Kullback-Leibler divergence $\widetilde{D}_{\text{KL}}(\nu_\eps || \mu_\eps)$.  
   Furthermore, instead of minimizing $\widetilde{D}_{\text{KL}}(\nu_\eps || \mu_\eps)$,    we consider the variational problem
 \be\label{eq:min2}
 \inf_{\nu\in \A} \left( \eps \widetilde{D}_{\text{KL}}(\nu_\eps || \mu_\eps) + \eps^{\gamma}\|\bA\|^2_{\bH^1(0,1)}\right), 
 \en
 where $\gamma > 0$ and $\A$ is given by \eqref{eq:GM1}. We will also study the minimization problem over the set $\A_a$. The reasons why the problem \eqref{eq:min2} is of interest to us are the following. First, multiplying $\widetilde{D}_{\text{KL}}(\nu_\eps || \mu_\eps)$ by $\eps$ does not change the minimizers. Yet after this scaling the $m$-dependent terms of $\widetilde{D}_{\text{KL}}(\nu_\eps || \mu_\eps)$ (the first two terms on the right hand side of \eqref{eq:kld2}) and the $\bA$-dependent terms (middle line of \eqref{eq:kld2}) are well-balanced since they are all order one quantities with respect to $\eps$. 
Moreover, the regularization term $\eps^{\gamma}\|\bA\|^2_{\bH^1(0,1)}$ is necessary because the matrix $\bB_\eps$, along any infimizing sequence for
$\eps\widetilde{D}_{\text{KL}}(\nu_\eps || \mu_\eps)$, will only converge 
weakly and the minimizer may not be attained in $\A$. This issue is
illustrated in \cite[Example 3.8 and Example 3.9]{PSSW15a} and a 
similar regularization is used there. 

\begin{rem}
The normalization constant $Z_{\mu, \eps}$ in \eqref{eq:kld} is dropped in our minimization problem. This is one of the advantages of quantifying measure approximations by means of the Kullback-Leibler divergence. However, understanding the asymptotic behavior of $Z_{\mu,\eps}$ in the limit $\eps\gt 0$ is quite important, even though this is difficult. In particular, it allows us to study the asymptotic behavior of the scaled Kullback-Leibler divergence $\eps D_{\text{KL}}(\nu_\eps || \mu_\eps)$, whereby quantitative information on the quality
of the Gaussian approximation in the small temperature limit can be extracted. In the next section we study behavior of the minimizers of $F_\eps$ in the limit $\eps\gt 0$; 
we postpone study of $\eps D_{\text{KL}}(\nu_\eps || \mu_\eps)$, which
requires analysis of $Z_{\mu,\eps}$ in the limit $\eps\gt 0$, 
to future work.
$\qed$
\end{rem}

\begin{rem}
We  choose the small weight $\eps^{\gamma}$ with some $\gamma > 0$ in front of the regularization term with the aim of weakening the contribution from the regularization so that it disappears in the limit $\eps \gt 0$. For 
the study of the $\Gamma$-limit of $F_\eps$, we will consider $\gamma\in (0,\frac{1}{2})$; see Theorem \ref{thm:main} in the next section.$\qed$
\end{rem}
\begin{rem}
The Kullback-Leibler divergence is not symmetric in its
arguments. We do not study $\widetilde{D}_{\text{KL}}(\mu_\eps || \nu_\eps)$ because 
minimization of this functional over the class of Gaussian measures leads simply to moment matching and this is not approprpriate for problems with
multiple minimizers, see \cite[Section 10.7]{B06}.
$\qed$
\end{rem}
The following theorem establishes the existence of minimizers for the problem \eqref{eq:min2}. 
\begin{thm}\label{thm:opt}
Given the measure $\mu_\eps$ defined by \eqref{eq:mu} with fixed $\eps > 0$. There exists at least one measure $\nu \in \A$ (or $\A_a$) minimizing the functional  
\be\label{eq:minifunc}
\nu \mapsto \eps \widetilde{D}_{\text{KL}}(\nu || \mu_\eps) + \eps^{\gamma}\|\bA\|^2_{\bH^1(0,1)}. 
\en
over $\A$ (or $\A_a$).
\end{thm}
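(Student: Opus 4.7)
The plan is to apply the direct method of the calculus of variations to
$$F_\eps(m,\bA) := \eps\, \widetilde{D}_{\text{KL}}(\nu_\eps\|\mu_\eps) + \eps^\gamma \|\bA\|^2_{\bH^1(0,1)},$$
viewed as a functional on $\HH$ (or $\HH_a$) via the parametrization $\nu_\eps = N(m, 2(-\bD+\bB_\eps)^{-1})$. First I would check that $F_\eps$ is bounded below: from identity \eqref{eq:kld} and the nonnegativity of $D_{\text{KL}}$, one has $\widetilde{D}_{\text{KL}}(\nu_\eps\|\mu_\eps) \geq \tfrac{|x_1-x_0|^2}{4} - \log Z_{\mu,\eps}$, a finite constant for fixed $\eps$. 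Fix a minimizing sequence $(m_n, \bA_n) \in \HH$.

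Next I would establish coercivity. The regularization gives a uniform bound on $\|\bA_n\|_{\bH^1}$. To control $m_n$, I would use the pointwise lower bound \eqref{eq:psilb} to obtain $\frac{1}{2\eps}\E^{\overline{\nu}_{\eps,n}}\int_0^1 \Psi_\eps(z+m_n)\,dt \geq -C$ uniformly, while the $\bA$-dependent contributions (entropy, trace, log-determinant) are handled via Sobolev embedding $\bH^1 \hookrightarrow \mathbf{C}$, which turns the $\bH^1$-bound on $\bA_n$ into a uniform $\mathbf{C}$-bound, together with the Schrödinger/Green's tensor estimates from Lemma \ref{lem:schodinger} applied to $\bG_{\eps,n}$ and $\overline{\bM}_{\eps,n}$. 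Combined with the positivity of $\tfrac{\eps}{4}\int_0^1|m_n'|^2\,dt$ and the boundary conditions $m_n(0)=x_-, m_n(1)=x_+$, this yields $\|m_n\|_{\bH^1}$ uniformly bounded. Up to a subsequence, $m_n \wgt m$ in $\bH^1_\pm$ and $\bA_n \wgt \bA$ in $\bH^1$, with $m_n \to m$ and $\bA_n \to \bA$ strongly in $\mathbf{C}([0,1])$; the pointwise constraint $\bA_n(t) \geq a\,\bI_d$ (when working in $\HH_a$) passes to the limit by uniform convergence.

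The final step is weak lower semicontinuity of each term of $F_\eps$. The kinetic term $\tfrac{\eps}{4}\|m'\|^2_{\bL^2}$ and the regularization $\eps^\gamma\|\bA\|_{\bH^1}^2$ are convex, hence weakly lower semicontinuous. The trace term $\tfrac{1}{2}\int \Tr(\bA)\,dt$ is continuous under uniform convergence. Standard ODE stability applied to \eqref{eq:M} under $\mathbf{C}$-convergence of $\bA_n$ gives $\overline{\bM}_{\eps,n} \to \overline{\bM}_\eps$ uniformly, so the log-determinant term is continuous (the limiting matrix is positive definite by Lemma \ref{lem:RNder0}). The main obstacle is the pair of coupled expectations
\begin{equation*}
\frac{1}{2\eps}\,\E^{\overline{\nu}_{\eps,n}}\!\int_0^1 \Psi_\eps(z+m_n)\,dt \quad \text{and} \quad -\frac{\eps}{4}\,\E^{\overline{\nu}_{\eps,n}}\!\int_0^1 z^T \bB_{\eps,n}\, z\,dt,
\end{equation*}
in which both the integrand (through $m_n$ or $\bB_{\eps,n}$) and the measure of integration depend on $n$. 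To treat these I would proceed in three steps: (i) from the uniform convergence $\bG_{\eps,n} \to \bG_\eps$ on $[0,1]^2$ (a consequence of $\bA_n \to \bA$ in $\mathbf{C}$ applied to the Green's tensor equation \eqref{eq:grt}) together with \eqref{eq:greenexp}, deduce weak convergence of the Gaussian laws $\overline{\nu}_{\eps,n} \to \overline{\nu}_\eps$ on $\mathbf{C}([0,1];\R^d)$; (ii) invoke the growth bound (A-4), namely $|\Psi_\eps(x)| \leq C_1 e^{C_2|x|^\alpha}$ with $\alpha<2$, and a Fernique-type uniform Gaussian moment estimate (available from the uniform bound on $\bG_{\eps,n}$) to obtain uniform integrability, then pass to the limit by a Vitali-type argument using $m_n \to m$ in $\mathbf{C}$; (iii) rewrite the quadratic expectation as $2\int_0^1 \Tr(\bB_{\eps,n}(t)\bG_{\eps,n}(t,t))\,dt$ and split $\bB_{\eps,n} = \eps^{-2}\bA_n^2 - \eps^{-1}\bA_n'$, passing to the limit in the first piece by strong $\mathbf{C}$-convergence of $\bA_n^2$ and in the second by integration by parts, exchanging the derivative onto $\bG_{\eps,n}(t,t)$ (whose derivative is controlled by elliptic regularity) so that only weak $\bL^2$-convergence of $\bA_n$ is required. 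Combining these steps gives $\liminf_n F_\eps(m_n,\bA_n) \geq F_\eps(m,\bA)$, so the limit $(m,\bA)$ is a minimizer. The same argument, restricted to the weakly closed subset $\bH^1_a$, produces a minimizer in $\A_a$.
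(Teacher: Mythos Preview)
Your overall strategy---direct method in $(m,\bA)$, coercivity from the regularizer and \eqref{eq:psilb}, then term-by-term lower semicontinuity---is reasonable and would ultimately work, but it is a genuinely different route from the paper's. The paper does not carry out a direct method at all: it invokes \cite[Theorem~3.10]{PSSW15a}, an abstract existence result for KL-minimization over Gaussians, and reduces the whole proof to verifying one statement: if $\sup_n\|\bA_n\|_{\bH^1}<\infty$ then along a subsequence $\bB_{\eps,n}=\eps^{-2}\bA_n^2-\eps^{-1}\bA_n'$ converges \emph{strongly} as a multiplication operator in $\mathcal{L}(\bH^\beta,\bH^{-\beta})$ for some $\beta\in(0,1)$. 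This is obtained by observing $\bB_{\eps,n}\rightharpoonup\bB_\eps$ weakly in $\bL^2$, upgrading via the compact embedding $\bL^2\hookrightarrow\bH^{-\alpha}$, and then applying Lemma~\ref{lem:app1}. That operator convergence is what feeds into the abstract lower-semicontinuity machinery of \cite{PSSW15a}; none of your steps (i)--(iii) are needed there.

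Your argument does have a genuine gap, and it sits exactly where the paper's compactness step would go. In step (i) you assert that $\bG_{\eps,n}\to\bG_\eps$ uniformly on $[0,1]^2$ ``as a consequence of $\bA_n\to\bA$ in $\mathbf{C}$ applied to the Green's tensor equation \eqref{eq:grt}''. But \eqref{eq:grt} involves the potential $\bB_{\eps,n}=\eps^{-2}\bA_n^2-\eps^{-1}\bA_n'$, and you only have $\bA_n'\rightharpoonup\bA'$ \emph{weakly} in $\bL^2$; uniform convergence of $\bA_n$ says nothing about $\bA_n'$. So convergence of the coefficients in \eqref{eq:grt} fails in any strong norm, and the claimed Green's-tensor convergence is not an immediate consequence. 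The same circularity infects step (iii): after integrating by parts you need $\bG_{\eps,n}(t,t)$ and $\partial_t\bG_{\eps,n}(t,t)$ to converge, and these again depend on $\bA_n'$. The fix is precisely the paper's compactness: from weak $\bL^2$-convergence of $\bB_{\eps,n}$ one gets strong convergence in $\bH^{-\alpha}$ (or in $\mathcal{L}(\bH^\beta,\bH^{-\beta})$ via Lemma~\ref{lem:app1}), and then the second resolvent identity
\[
(-\bD+\bB_{\eps,n})^{-1}-(-\bD+\bB_\eps)^{-1}=-(-\bD+\bB_{\eps,n})^{-1}(\bB_{\eps,n}-\bB_\eps)(-\bD+\bB_\eps)^{-1}
\]
yields convergence of the covariance operators and hence of the Green's tensors. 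With that inserted, your outline goes through; as written, step (i) does not stand.
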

\begin{proof}
We only prove the theorem for the case where the minimizing problem is defined over $\A_a$ since the other case can be treated in the same manner.  
First we show that the infimum of \eqref{eq:minifunc} over $\A_a$ is finite for any fixed $\eps > 0$. In fact, consider $\bA^\ast =  a\cdot \bI_d$ with $a > 0$ and $m^\ast$ being any fixed function in $\bH^1_\pm(0,1)$. Then we show that $F(m^\ast, \bA^\ast)$ is finite. For this, by the formula \eqref{eq:kld2}, we only need to show that 
$$
\E^{\overline{\nu}_\eps}\int_0^1 \Psi_\eps(z(t) + m^\ast(t))dt < \infty.
$$
Since $\bA^\ast = a\cdot \bI_d$, from \eqref{eq:greenexp} one can see that $z(t) \sim N(0, 2\bG_\eps(t,t))$ under the measure $\overline{\nu}_\eps$. In addition, it follows from \eqref{eq:greenF} that 
$|\bG_\eps(t,t)|_F \leq C\eps$ a.e. on $(0,1)$ for some $C> 0$. 
Then from the growth condition (A-4) on $\Psi_\eps$ and the fact that $m^\ast\in \bL^\infty(0,1)$,
$$
\bea
& \E^{\overline{\nu}_\eps}\int_0^1 \Psi_\eps(z(t) + m^\ast(t))dt \\
& =
\int_0^1 \int_{\R^d} \frac{1}{\sqrt{(4\pi)^d \text{det}(\bG_\eps(t,t))}} e^{-\frac{1}{4}x^T\bG_\eps(t,t)^{-1}x} \Psi_\eps(x + m^\ast(t)) dx dt\\
& = \int_0^1 \int_{\R^d} \frac{1}{(4\pi)^{d/2}} e^{-\frac{1}{4}|x|^2 } \Psi_\eps\left((\bG_\eps(t,t)^{1/2}) x + m^\ast(t)\right) dx dt\\
& \leq C_1 \text{exp}\left(\|m^\ast\|^\alpha_{\bL^\infty(0,1)}\right) \int_{\R^d} e^{-\frac{1}{2} |x|^2 + C_2 \eps^\alpha |x|^\alpha} dx <  \infty
\ena
$$
since $\alpha \in  [0, 2)$.

Next, we prove that the minimizer exists. By examining the proof of \cite[Theorem 3.10]{PSSW15a}, one can see that the theorem is proved if the following statement is valid: 
if a sequence $\{\bA_n\}\subset \bH^1_a(0,1)$ satisfies $\sup_n \|\bA_n\|_{\bH^1(0,1)} < \infty$, then the sequence $\{\bB_n\}$ with $B_n = \eps^{-2} \bA_n^2 - \eps^{-1}\bA_n^{\prime}$, viewed as multiplication operators, contains a subsequence that converges to $\bB = \eps^{-2}\bA^2 - \eps^{-1}\bA^\prime$ in $\mathcal{L}(\bH^{\beta}, \bH^{-\beta})$ for some $\bA\in \bH^1_a(0,1)$ and some $\beta \in (0, 1)$. Hence we only need to show that the latter statement is true. In fact, if $\sup_n \|\bA_n\|_{\bH^1(0,1)} < \infty$, then there exists a subsequence $\{\bA_{n_k}\}$ and some $\bA\in \bH^1(0,1)$ such that $\bA_{n_k} \wgt \bA$ in $\bH^1(0,1)$. By Rellich's compact embedding theorem, $\bA_{n_k} \gt \bA$ in $\bL^2(0,1)$ and passing to a further subsequence we may assume that $\bA_{n_k} \gt \bA$ a.e. on $[0,1]$. This implies that $\bA$ is symmetric and $\bA \geq a\cdot \bI_d$ a.e. and hence $\bA\in \bH^1_a(0,1)$. In addition, it is clear that $\bB_{n_k} \wgt \bB$ in $\bL^2(0,1)$. According to Lemma \ref{lem:app1}, for any $\alpha,\beta > 0$ such that $\beta >\max(\alpha, \alpha/2 + 1/4)$, a matrix-valued function in $\bH^{-\alpha}(0,1)$ can be viewed as a multiplication operator in $\mathcal{L}(\bH^\beta, \bH^{-\beta})$. Thanks to the compact embedding from $\bL^2(0,1)$ to $\bH^{-\alpha}(0,1)$, we obtain $\bB_{n_k} \gt \bB$ in $\mathcal{L}(\bH^\beta, \bH^{-\beta})$. The proof is complete. 
\end{proof}


\begin{rem}
Minimizers of \eqref{eq:minifunc} are not unique in general. The uniqueness issue is outside the scope of this paper; see more discussions about uniqueness of minimizing the Kullback-Leibler divergence in \cite[Section 3.4]{PSSW15a}. $\qed$
\end{rem}


\section{Low Temperature Limit}\label{sec:gammalim}
In this section, we aim to understand the low temperature limit of the best Gaussian approximations discussed in the previous section. This will be done in the framework of $\Gamma$-convergence. First we recall the definition of $\Gamma$-convergence (see \cite{bra02a,D93}) and introduce some functionals which are closely related to the Gaussian approximations.
\subsection{Notion of $\Gamma$-Convergence and Preliminaries}
\begin{defn}
Let $\mathcal{X}$ be a topological space, $\eps > 0$ and $F_\eps : \mathcal{X} \gt \R$ a family of functionals. We say that $F_\eps$ $\Gamma$-converges to $F:\mathcal{X}\gt \R$ as $\eps \gt 0$ if the following two conditions hold:

(i) (Liminf inequality) for every $u\in \mathcal{X}$ and every sequence $u_\eps\in \mathcal{X}$ such that $u_\eps \gt u$,

\ben
F(u) \leq \liminf_{\eps \gt 0}  F_{\eps} (u_{\eps});
\enn

(ii) (Limsup inequality) for every $u\in \mathcal{X}$ there exists a sequence $u_\eps$ such that $u_\eps \gt u$ and 
\ben
F(u) \geq \limsup_{\eps \gt 0} F_\eps(u_\eps).
\enn
\end{defn}
For studying the low temperature limit of the Gaussian approximations, 
we consider the following family of functionals: 
 \be\label{eq:Fd-0}
F_\eps(m, \bA) := \begin{cases}
\eps \widetilde{D}_{\text{KL}}(\nu_\eps || \mu_\eps) + \eps^{\gamma}\|\bA\|^2_{\bH^1(0,1)}, & \text{ if } (m, \bA) \in \HH,\\
\infty, & \text{otherwise in } \mathcal{X}
\end{cases}
\en
on the space $\mathcal{X}  = \bL^1(0,1) \times \bL^1(0,1)$.
Then minimizing \eqref{eq:minifunc} over $\A$ is equivalent to the following problem
\be\label{eq:min1}
\inf_{(m, \bA)\in \mathcal{X}} F_\eps(m, \bA).
\en
In order to study the $\Gamma$-limit of $F_\eps$,
 we equip the space $\mathcal{X}$ with a product topology 
such that the convergence $(m_\eps, \bA_\eps) \gt (m, \bA)$ in $\mathcal{X}$ means that $m_\eps \gt m$ in $\bL^1(0,1)$ and that $\bA_\eps \wgt \bA$ in $\bL^1(0, 1)$. The reason for choosing the weak topology for $\bA$ is that the functional $F_\eps$ is coercive under such topology only, see Proposition \ref{prop:compact-2}. 
Now before we proceed to discussing the $\Gamma$-convergence of $F_\eps$, we first state a useful $\Gamma$-convergence result for the 
classical Ginzburg-Landau functional 
\be\label{eq:Eeps} 
E_\eps (m):= \begin{cases}
\frac{\eps}{4} \int_0^1 |m^\prime(t)|^2 dt + \frac{1}{4\eps} \int_0^1 |\nabla V(m(t))|^2 dt & \text{ if } m\in \bH^1_{\pm}(0,1),\\
\infty, \text{ otherwise in } \bL^1(0,1).
\end{cases}
\en
Notice that in the above definition, any $m$ such that $E_\eps(m)$ is finite should satisfy the Dirichlet boundary conditions $m(0) = x_-$ and $m(1) = x_+$. 
We also remark that after performing the scaling transformation $t \mapsto \eps^{-1} t$,  the functional $E_\eps$ coincides with the Freidlin-Wentzell functional $S_T$ (defined in \eqref{eq:FW}) with $T = \eps^{-1}$. Indeed, by rewriting $\widetilde{m}(\cdot) = m(\eps^{-1}\cdot)$, one sees that $E_\eps(m) = S_{T}(\widetilde{m})$.

To define the $\Gamma$-limit of $E_\eps$, we now introduce some additional notations. Recall that $\EE$ defined in \eqref{eq:criticalset} is the set of critical points of V.
For each pair $x_-, x_+\in \EE$,  we define the set of transition paths  
$$\bX(x_-, x_+) := \{m\in \BV(\R)\ |\ \lim_{t\gt \pm \infty} m(t) = x_\pm \text{ and } m^\prime \in \bL^2(\R)\},$$
 the cost functional 
\be\label{eq:JT}
\mathcal{J}_{T}(m) = \frac{1}{4} \int_{-T}^{T} \Bigl(|m^\prime(t)|^2 + |\nabla V (m(t))|^2 \Bigr)dt,
\en
and set $\mathcal{J}(m) := \mathcal{J}_{\infty}(m)$. 
The minimal transition cost from $x_-$ to $x_+$ is  then defined as
$$\Phi(x_-, x_+) := \inf \{\mathcal{J}(m)\ |\ m \in \bX(x_-, x_+)\}.$$ 
It is worth noting that the function $\Phi(x_-, x_+)$ is closely related to the so-called quasi-potential, which plays an important role in large deviation theory: In fact, suppose that $x_-, x_+ \in \mathcal{E}$ and that $V$ satisfies Assumption \eqref{assump}. Then according to \cite[Lemma 3.2]{FT89}, the function $\Phi(x_-, x_+)$ has the following equivalent form:
\be\label{eq:eqphi}
\bea
 &\Phi(x_-, x_+) \\
 & = \inf_{T, m} \Big\{ \mathcal{J}_T(m) : T > 0, m\in \bH^1(-T, T)  \text{ and } m(-T) = x_-, m(T) = x_+ \Big\}.
\ena
\en
This definition shows that $ \Phi(x_-, x_+)$ coincides with the quasi-potential between $x_-$ and $x_+$ (as defined in \cite[Chapter 4]{FSW12})
up to the additive constant $- \frac{1}{2} (V(x_+) - V(x_-))$.

We also remark that the equivalent formulation \eqref{eq:eqphi} provides an important ingredient for proving the $\Gamma$-convergence of $E_\eps$; see e.g. \cite{FT89,bra02a}. Given $x_\pm \in \EE$, if either $x_-$ or $x_+$ is a local minimum or maximum of potential $V$ and if $V$ satisfies (A-1)-(A-3) of Assumption \ref{assump}, it was shown in \cite[Lemma 2.1]{PST} that the infimum $\Phi(x_-, x_+)$ is attained by the heteroclinic orbits $m_\ast$ of the Hamiltonian system 
$$
m_\ast^{\dpm}(t) - D^2 V(m_\ast) \nabla V(m_\ast) = 0, \quad \lim_{t \gt \pm \infty} m(t) = x_\pm.
$$
In this case,  
\be\label{eq:minimumcost}
\Phi(x_-, x_+) = \frac{1}{2} |V(x_+) - V(x_-)|.
\en

Denote by $\BV(0,1; \EE)$ the set of functions in $\BV(0,1)$ taking values in $\EE$ a.e. on $[0,1]$. For any $u\in \BV(0,1; \EE)$, let $J(u)$ be the set of jump points of $u$ on $(0,1)$, and let $u(t^\pm)$ the left and right sided limits of $u$ at time $t\in [0,1]$. 
The following lemma, concerning the compactness of $E_\eps$, will be very useful in identifying its $\Gamma$-limit. Its proof can be found in \cite[Theorem 1.2]{L14}. 
\begin{lem}\label{lem:compact-1}
Assume that the potential $V$ satisfies (A-1)-(A-3). Let $\eps_n\gt 0$ and let $\{m_{n}\} \subset \bH^1_{\pm}(0,1)$ be such that
$$
\limsup_{n\gt \infty} E_{\eps_n}(m_{n}) < \infty.
$$
Then there exists a subsequence $\{m_{n_k}\}$ of  $\{m_{n_k}\}$ and an $m\in \BV(0,1; \EE)$ such that 
$m_{n_k} \gt m$ in $\bL^1(0,1)$ as $k \gt \infty$.
\end{lem}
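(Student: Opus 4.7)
\emph{Strategy.} I would adapt the classical Modica--Mortola / Fonseca--Tartar compactness scheme. The proof rests on two complementary observations drawn from the uniform bound $c:=\sup_n E_{\eps_n}(m_n)<\infty$. First,
\[\int_0^1 |\nabla V(m_n(t))|^2\,dt \le 4\eps_n\, E_{\eps_n}(m_n)\longrightarrow 0,\]
so $\nabla V(m_n)\to 0$ in $\bL^2(0,1)$. By (A-2) the set $\{\nabla V=0\}$ equals the finite collection $\EE$ of non-degenerate critical points, so after a subsequence $\mathrm{dist}(m_n(\cdot),\EE)\to 0$ in measure. Second, the elementary AM--GM identity
\[\tfrac{\eps_n}{4}|m_n'|^2 + \tfrac{1}{4\eps_n}|\nabla V(m_n)|^2 \ge \tfrac12\, |m_n'|\,|\nabla V(m_n)|\]
is the integrand-level fact that, after composition with any Lipschitz $\phi:\R^d\to\R$ satisfying $|\nabla\phi|\le |\nabla V|$ a.e., produces a uniform bounded-variation estimate on $\phi\circ m_n$.

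\emph{The separating function.} The centrepiece is therefore an auxiliary $\phi:\R^d\to\R$ with three properties: (i) $\phi$ is locally Lipschitz with $|\nabla\phi|\le |\nabla V|$ a.e.; (ii) $\phi$ is proper, $\phi(x)\to\infty$ as $|x|\to\infty$; (iii) $\phi$ is injective on $\EE$. The natural candidate is the $|\nabla V|$-weighted geodesic distance
\[\phi(x):=\inf\Bigl\{\int_0^1 |\nabla V(\gamma(s))|\,|\gamma'(s)|\,ds : \gamma\in C^1([0,1];\R^d),\ \gamma(0)=y_0,\ \gamma(1)=x\Bigr\}\]
from a fixed base point $y_0$. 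Property (i) is standard for infimal-length functionals and (ii) follows from (A-3) and (A-6), which force $|\nabla V|$ to stay uniformly positive outside a compact set. Given $\phi$, the chain rule and AM--GM give
\[\int_0^1\Bigl|\tfrac{d}{dt}(\phi\circ m_n)(t)\Bigr|\,dt \le \int_0^1 |\nabla V(m_n)|\,|m_n'|\,dt \le 2\, E_{\eps_n}(m_n)\le 2c,\]
and together with the fixed boundary values $\phi(m_n(0))=\phi(x_-)$, $\phi(m_n(1))=\phi(x_+)$ this bounds $\phi\circ m_n$ uniformly in $\BV(0,1)$. Helly's selection theorem then yields a subsequence $\phi\circ m_{n_k}\to g$ a.e.\ and in $L^1(0,1)$ for some $g\in\BV(0,1)$.

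\emph{Conclusion and main obstacle.} Combining the two ingredients, for a.e.\ $t$ we have $\mathrm{dist}(m_{n_k}(t),\EE)\to 0$ and $\phi(m_{n_k}(t))\to g(t)$; by continuity and (iii), $m_{n_k}(t)$ must converge to the unique $m(t)\in\EE$ with $\phi(m(t))=g(t)$. Properness of $\phi$ turns the uniform $L^1$-bound on $\phi\circ m_{n_k}$ into uniform integrability of $m_{n_k}$, which upgrades the pointwise a.e.\ convergence to convergence in $\bL^1(0,1)$ by a Vitali/dominated-convergence argument. Since $\phi|_\EE$ is injective and $g\in\BV(0,1)$, the limit $m$ lies in $\BV(0,1;\EE)$. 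I expect the main technical obstacle to be property (iii): a single geodesic distance may accidentally assign the same value to two distinct critical points, in which case convergence of $\phi\circ m_{n_k}$ would fail to pin down $m(t)$. Because $\EE$ is finite by (A-2), this symmetry can be broken either by a generic choice of $y_0$ or, more robustly, by running the same BV argument with a small vector of such distances $(\phi^{(1)},\ldots,\phi^{(k)})$ chosen so that their joint image is injective on $\EE$.
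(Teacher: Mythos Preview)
The paper does not give its own proof of this lemma; it simply cites \cite[Theorem~1.2]{L14}. Your proposal is the classical Fonseca--Tartar compactness scheme for vector-valued Modica--Mortola functionals, which is precisely the argument in that reference, so the approaches agree.

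Two minor corrections. First, you invoke (A-6) for properness of $\phi$, but the lemma only assumes (A-1)--(A-3). Condition (A-3) alone already gives $|\nabla V|\ge c>0$ on $\{|x|>R\}$; any curve from $y_0$ to $x$ with $|x|$ large must cross at least Euclidean length $|x|-R-|y_0|$ through that region, so $\phi(x)\ge c\bigl(|x|-R-|y_0|\bigr)\to\infty$. No monotonicity is needed. Second, your handling of the injectivity obstacle is fine but slightly roundabout: the standard formulation in the vector-valued setting bypasses the issue by working directly with the finite family of weighted distances $\phi_p(\cdot):=d(\cdot,p)$ for $p\in\EE$, each of which satisfies $|\nabla\phi_p|\le|\nabla V|$ and is strictly positive on $\EE\setminus\{p\}$ by the non-degeneracy in (A-2). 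Helly applied to each $\phi_p\circ m_n$ then pins down the limit without any genericity argument on the base point.
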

We remark that we incorporate the boundary conditions $m_n(0)= x_-, m_n(1) = x_+$ in the statement of the lemma since $m_n \in \bH^1_{\pm}(0,1)$.
The following Proposition identifies the $\Gamma$-limit of $E_\eps$ with respect to $\bL^1$-topology; this is based upon Lemma \ref{lem:compact-1} and the standard Modica-Mortola type arguments (see \cite{MM77,Bal90,PST}). The proof is given in Appendix \ref{app-d}. The same $\Gamma$-convergence result was claimed in \cite{PST}, but the proof there was actually carried out with respect to the topology in the space of functions of bounded variations.
\begin{prop}\label{prop:gammalimit-E}
Assume that $V$ satisfies the conditions (A-1)-(A-3), the $\Gamma$-limit of $E_\eps$ is 
\be\label{eq:Em}
\bea
E (m) := \begin{cases}
\Phi(x_-, m(0^+))  +\sum_{\tau\in J(m)} \Phi(m(\tau^-), m(\tau^+))\\
\qquad  + \Phi(m(1^-), x_+) & \text{ if } m\in \BV(0,1;\EE),\\
+\infty & \text{ otherwise in } \bL^1(0,1).
\end{cases}
\ena
\en
\end{prop}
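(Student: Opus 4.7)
The plan is to establish the two $\Gamma$-convergence inequalities separately along the classical Modica--Mortola route, with the multi-well critical set $\EE$ and the boundary conditions $m(0)=x_-, m(1)=x_+$ handled by treating possible mismatches at $t=0$ and $t=1$ as boundary ``jumps''. For compactness, Lemma \ref{lem:compact-1} ensures that whenever $\{m_{\eps_n}\}$ converges in $\bL^1$ with $\liminf_n E_{\eps_n}(m_{\eps_n}) < \infty$, the limit lies in $\BV(0,1;\EE)$. Thus in the liminf inequality we may assume $m\in\BV(0,1;\EE)$, since otherwise $E(m)=+\infty$ and the inequality is trivial.

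For the liminf inequality, the key is the pointwise AM--GM bound
\begin{equation*}
\frac{\eps}{4}|m'(t)|^2 + \frac{1}{4\eps}|\nabla V(m(t))|^2 \geq \frac{1}{2}\,|m'(t)|\,|\nabla V(m(t))|.
\end{equation*}
Enumerate $J(m)\cup\{0,1\}$ as $0=\tau_0<\tau_1<\cdots<\tau_{N}<\tau_{N+1}=1$, and set $m(0^-):=x_-,\ m(1^+):=x_+$ so that the boundary jumps $\Phi(x_-,m(0^+))$ and $\Phi(m(1^-),x_+)$ are treated uniformly with the interior ones. Around each $\tau_i$ choose disjoint one- or two-sided neighbourhoods $I_i^\delta$ of length $O(\delta)$, and on each of them perform the change of variables $t=\tau_i+\eps_n s$. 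Writing $\tilde m_n(s)=m_n(\tau_i+\eps_n s)$, the localized energy transforms into $\J_{T_n}(\tilde m_n)$ with $T_n=\delta/\eps_n\to\infty$. Because $m_n\to m$ in $\bL^1$, a diagonal/subsequence argument produces times $s_n^\pm$ with $\eps_n s_n^\pm\to 0$ at which $\tilde m_n(s_n^\pm)\to m(\tau_i^\pm)$; the representation \eqref{eq:eqphi} then gives
\begin{equation*}
\liminf_n \int_{I_i^\delta}\Bigl(\tfrac{\eps_n}{4}|m_n'|^2+\tfrac{1}{4\eps_n}|\nabla V(m_n)|^2\Bigr)\,dt \ \geq\ \Phi(m(\tau_i^-),m(\tau_i^+)).
\end{equation*}
Summing over $i=0,\dots,N+1$ and using non-negativity of the integrand on the complement yields $\liminf_n E_{\eps_n}(m_n)\geq E(m)$.

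For the limsup inequality, build a recovery sequence by inserting rescaled near-optimal transitions at each $\tau_i$. Given $\eta>0$, pick $\varphi_i^\eta\in\bX(m(\tau_i^-),m(\tau_i^+))$ with $\J(\varphi_i^\eta)\leq \Phi(m(\tau_i^-),m(\tau_i^+))+\eta$ (using the equivalent formulation \eqref{eq:eqphi} so that $\varphi_i^\eta$ may be taken defined on a finite interval $[-T_\eta,T_\eta]$ with prescribed endpoints). Define
\begin{equation*}
m_\eps(t) := \varphi_i^\eta\!\left(\tfrac{t-\tau_i}{\eps}\right)\quad \text{for } |t-\tau_i|\leq \eps T_\eta,
\end{equation*}
extended by the appropriate constant $m(\tau_i^\pm)\in\EE$ (so that $\nabla V=0$) between successive transition windows, and truncated to respect the boundary conditions at $t=0,1$. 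The change of variables $s=(t-\tau_i)/\eps$ shows that each transition window contributes $\J_{T_\eta}(\varphi_i^\eta)\leq \Phi(m(\tau_i^-),m(\tau_i^+))+\eta$ to $E_\eps(m_\eps)$, while the constant portions contribute nothing. Hence $\limsup_\eps E_\eps(m_\eps)\leq E(m)+(N+2)\eta$, and a diagonal argument in $\eta\downarrow 0$ gives a recovery sequence; $\bL^1$-convergence $m_\eps\to m$ is immediate because the transition windows have vanishing measure.

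The main obstacle is the bookkeeping at the boundary and at the jump points: verifying that the rescaled functions $\tilde m_n$ attain values close to $m(\tau_i^\pm)$ at the endpoints of the rescaled window (so that \eqref{eq:eqphi} applies with correct boundary conditions), and, in the recovery sequence, matching the heteroclinic-type profile $\varphi_i^\eta$ to the constant critical-point value of $m$ on either side. Use of the finite-time formulation in \eqref{eq:eqphi} circumvents the potential pitfall of asymptotic matching at infinity, and (A-2)--(A-3) ensure that the critical points are well-separated non-degenerate sinks/saddles so that consecutive transitions do not interfere. The remaining routine estimates are classical Modica--Mortola calculations following \cite{MM77,Bal90,FT89,PST}.
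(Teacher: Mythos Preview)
Your proposal is correct and follows essentially the same Modica--Mortola route as the paper's proof in Appendix~\ref{app-d}: localize around each jump and the two endpoints, rescale by $\eps^{-1}$, and invoke the finite-time characterization \eqref{eq:eqphi} of $\Phi$ for the liminf; glue rescaled near-optimal profiles from \eqref{eq:eqphi} with constant $\EE$-valued segments for the limsup. The only cosmetic differences are that the paper fixes points $t_1<\tau<t_2$ of a.e.\ convergence rather than selecting moving times $s_n^\pm$ with $\eps_n s_n^\pm\to 0$, and it spells out only the single-jump case before appealing to additivity; also note that the AM--GM bound you write down is not actually used in your argument (nor in the paper's), and both arguments tacitly use continuity of $\Phi$ in its endpoints when passing from $m_n(t_n^\pm)$ to $m(\tau_i^\pm)$.
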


\subsection{Main Results}
 This subsection presents the main results about the $\Gamma$-convergence of the functional $F_\eps$; the proofs will be presented in the next section. 
Roughly speaking, our arguments indicate that the $\Gamma$-limit of $F_\eps$ on $\mathcal{X}$ should be 
\begin{equation}\label{e:main-functional-1}
F(m, \bA) := E(m) + \frac{1}{4} \int_0^1 \left(D^2 V(m(t)) - |\bA(t)|\right)^2 : |\bA^{-1}(t)| dt 
\end{equation}
where $E(m)$ is defined by \eqref{eq:Em}. Recall that $\bA : \bB = \Tr(\bA \bB^T)$. However, for technical reasons, we are only able to prove the claim under the condition that the matrix $\bA$ is positive definite; see Remark \ref{rem:positiveA}. To make this clear, let us first redefine $F_\eps$ to be
 \be\label{eq:Fd}
F_\eps(m, \bA) := \begin{cases}
\eps \widetilde{D}_{\text{KL}}(\nu_\eps || \mu_\eps) + \eps^{\gamma}\|\bA\|^2_{\bH^1(0,1)}, & \text{ if } (m, \bA) \in \HH_a,\\
\infty, & \text{otherwise in } \mathcal{X}_a
\end{cases}
\en
with some $a > 0$.
Then we can show that $F_\eps$ as defined in \eqref{eq:Fd} $\Gamma$-converges to $F$ defined by \eqref{e:main-functional-1} on the space $\mathcal{X}_a$ for any $a > 0$; see Theorem \ref{thm:main}. Recall that $\mathcal{X}_a = \bL^1(0,1)\times \bL^1_a(0,1)$ and that convergence  of $(m_n, \bA_n)$ in $\mathcal{X}_a$ means that the $m_n$ converge strongly in $\bL^1(0,1)$ and the $\bA_n$ converge weakly in $\bL^1_a(0,1)$.

By the definition of $F_\eps$ (by \eqref{eq:Fd}) and the expression \eqref{eq:kld} for $\widetilde{D}_{\text{KL}}(\nu_\eps || \mu_\eps)$, we can write 
\be\label{eq:feps}
F_\eps (m, \bA) = F_\eps^{(1)}(m, \bA) + F_\eps^{(2)}(\bA) + \eps^\gamma \|\bA\|_{\bH^1(0,1)}^2 
\en
for $(m, \bA)\in \HH_a$ where 
\be\label{eq:f1}
\begin{aligned}
 F_\eps^{(1)}(m, \bA) &:= \frac{\eps}{4}\int_0^1 |m^\prime (t)|^2 dt + \frac{1}{2\eps} \E^{\overline{\nu}_\eps} \left[\int_0^1 \Psi_\eps(z(t) + m(t)) dt\right],
\\
F_\eps^{(2)}(\bA) &:= - \frac{\eps}{4} \E^{\overline{\nu}_\eps} \left[\int_0^1 z(t)^T \bB_\eps(t) z(t) dt\right]+ \frac{1}{2}\int_0^1 \Tr(\bA(t))dt\\
& + \frac{\eps}{2}\log\left(\det\left(\int_0^1 \overline{\bM}_\eps(t)\overline{\bM}_\eps(t)^T dt\right)\right),
 \end{aligned}
 \en
 where $\Psi_\eps$ is given by \eqref{eq:psi} and $\overline{\bM}_\eps$ is defined by \eqref{eq:M}. 
 To identify the $\Gamma$-limit of $F_\eps$, we need to study the liminf or limsup of the sequence $\{F_\eps(m_\eps, \bA_\eps)\}$ with $m_\eps\in \bH^1_{\pm}(0,1)$ and $\bA_\eps \in \bH^1_a(0,1)$. This is non-trivial in our case, mainly because the functional $F_\eps$ depends on $m$ and $\bA$ in 
an implicit manner through the two expectation terms. Therefore in the first step we shall simplify $F_\eps$. The following proposition examines the limiting behavior of the functional $F_\eps$ from which a simplified and more explicit expression is obtained.

\begin{prop}\label{prop:sfeps}
Let $(m_\eps, \bA_\eps) \in \HH_a$. Assume that for some $\gamma\in (0, \frac{1}{2})$, 
$$\limsup_{\eps \gt 0} \eps^\gamma \|\bA_\eps\|^2_{\bH^1(0,1)} < \infty\quad \text{ and }\quad \limsup_{\eps \gt 0} \|m_\eps\|_{\bL^\infty(0,1)} < \infty.$$
 Then for $\eps>0$ small enough we have 
\be\label{eq:fepsk}
\bea
F_{\eps} (m_{\eps}, \bA_{\eps}) 
& = E_{\eps}(m_{\eps}) + \frac{1}{4}\int_0^1 \left(D^2 V(m_{\eps}(t)) - \bA_{\eps}(t)\right)^2: \bA_{\eps}^{-1}(t) dt\\
& + \int_0^1  \big( D^3 V(m_{\eps}(t))\cdot \nabla V(m_\eps(t)) \big): \bA_{\eps}^{-1}(t) dt + \eps^\gamma \|\bA_{\eps}\|_{\bH^1(0,1)}^2 + \cO(\eps^{\frac{1}{2}}).
\ena
\en
\end{prop}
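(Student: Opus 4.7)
The plan is to start from the decomposition \eqref{eq:feps} and to reduce each of the two expectation terms together with the log-determinant to an explicit functional of $(m_\eps,\bA_\eps)$, modulo an $\cO(\eps^{1/2})$ error. The common mechanism is that under $\overline{\nu}_\eps$ the fluctuation $z(t)$ is a centred Gaussian with $\E^{\overline{\nu}_\eps}[z(t)z(t)^T] = 2\bG_\eps(t,t)$, while the Schr\"odinger-type structure of $-\bD + \bB_\eps$ localises $\bG_\eps(t,t)$ at scale $\eps$. A key auxiliary estimate I would establish first is the asymptotic
\[
\bG_\eps(t,t) \;=\; \tfrac{\eps}{2}\,\bA_\eps^{-1}(t) \;+\; R_\eps(t),
\]
with $R_\eps$ of order $\eps^{3/2}$ in an integrated sense, obtained by freezing $\bA_\eps$ locally in time and comparing with the explicit constant-coefficient Green's tensor $\tfrac{1}{2\lambda}e^{-\lambda|t-s|}$ suggested by the scalar analogue. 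The correction $R_\eps$ is controlled by the $\bH^1$-modulus of $\bA_\eps$, which is tamed by the hypothesis $\eps^\gamma\|\bA_\eps\|_{\bH^1}^2$ bounded with $\gamma<1/2$.

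For the first expectation in $F_\eps^{(1)}$, I would Taylor expand $\Psi_\eps(m_\eps(t)+z(t))$ in $z$ to second order with an integral remainder of order three. The linear term vanishes after integration against the centred Gaussian, and the quadratic term produces $D^2\Psi_\eps(m_\eps(t)) : \bG_\eps(t,t)$. Using the identity $D^2\Psi_\eps = (D^2V)^2 + D^3V\cdot\nabla V - \eps D^2\Delta V$ and the Green's function asymptotic, this yields (up to combinatorial factors that must be tracked carefully) the $(D^2V)^2:\bA_\eps^{-1}$ and $D^3V\cdot\nabla V:\bA_\eps^{-1}$ terms appearing in \eqref{eq:fepsk}, the $\eps D^2\Delta V$ contribution being $\cO(\eps)$. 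The third-order remainder is bounded using the growth condition (A-4) together with the Gaussian moment estimate $\E^{\overline{\nu}_\eps}|z(t)|^3 \lesssim \eps^{3/2}$; after the $\tfrac{1}{2\eps}$ prefactor this is $\cO(\eps^{1/2})$. The deterministic part $\tfrac{\eps}{4}\int|m_\eps'|^2 + \tfrac{1}{2\eps}\int\Psi_\eps(m_\eps)$ rearranges to $E_\eps(m_\eps) - \tfrac12\int_0^1\Delta V(m_\eps)\,dt$.

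For $F_\eps^{(2)}$ the same second-moment identity gives $-\tfrac{\eps}{4}\E^{\overline{\nu}_\eps}\!\int z^T\bB_\eps z\,dt = -\tfrac{\eps}{2}\int\bB_\eps:\bG_\eps(t,t)\,dt$. The leading $\eps^{-2}\bA_\eps^2$ part of $\bB_\eps$, paired with the leading $\tfrac{\eps}{2}\bA_\eps^{-1}$ part of $\bG_\eps(t,t)$, contributes $-\tfrac14\int\Tr(\bA_\eps)\,dt$; combining with the explicit $\tfrac12\int\Tr(\bA_\eps)\,dt$ in $F_\eps^{(2)}$ leaves $+\tfrac14\int\Tr(\bA_\eps)\,dt$. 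The $-\eps^{-1}\bA_\eps'$ part produces $\tfrac12\int \bA_\eps':\bG_\eps(t,t)\,dt$, which is absorbed in the error by Cauchy--Schwarz together with the $\bH^1$ bound on $\bA_\eps$ and the diagonal bound $\|\bG_\eps(t,t)\|=\cO(\eps)$. The log-determinant is estimated via Abel's formula $\det\bM_\eps(1,t)=\exp(-\eps^{-1}\!\int_t^1\Tr\bA_\eps)$ and the exponential decay of $\overline{\bM}_\eps(t)$ away from $t=1$, which force $\det\!\big(\int_0^1\overline{\bM}_\eps\overline{\bM}_\eps^T dt\big)$ to be polynomial in $\eps$ (up to factors at worst polynomial in $\eps^{-\gamma}$), giving $\tfrac{\eps}{2}\log\det(\cdot)=\cO(\eps|\log\eps|)$. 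Collecting the pieces and using the algebraic identity
\[
(D^2V-\bA_\eps)^2:\bA_\eps^{-1}=(D^2V)^2:\bA_\eps^{-1}-2\,\Delta V(m_\eps)+\Tr(\bA_\eps)
\]
then delivers \eqref{eq:fepsk}.

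The principal obstacle is the Green's function asymptotic in the first step: the matrix $\bA_\eps$ is only $\bH^1$ in time and its $\bL^\infty$-norm may be as large as $\eps^{-\gamma/2}$, while the intrinsic stiffness scale of the Schr\"odinger operator is $\eps$. A naive frozen-coefficient approximation therefore errs by something proportional to $\|\bA_\eps'\|_{\bL^2}\cdot\eps$, which must be balanced against the target $\eps^{1/2}$ error; this is exactly where the hypothesis $\gamma<1/2$ enters the estimate. A secondary difficulty is that (A-4) gives only exponential-type bounds on $D^3\Psi_\eps$ rather than uniform ones, but these are routinely converted into usable remainder bounds via Gaussian tail estimates since $\alpha<2$ ensures finiteness of the relevant Gaussian moment generating functionals.
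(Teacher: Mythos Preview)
Your proposal is correct and follows essentially the same approach as the paper's own proof: the paper packages the Green's-tensor asymptotic (frozen-coefficient comparison), the second-order Taylor expansion of $\Psi_\eps$, the $\eps|\log\eps|$ bound on the log-determinant via Abel's identity, and the algebraic trace identity $(D^2V-\bA)^2:\bA^{-1}=(D^2V)^2:\bA^{-1}-2\Delta V+\Tr(\bA)$ into separate auxiliary results (Proposition~\ref{prop:f2}, Lemma~\ref{lem:F1}, Lemma~\ref{lem:log}, Lemma~\ref{lem:grt}), and the proof of Proposition~\ref{prop:sfeps} is then a one-line combination of these. The only point to flag is that the paper's Green's-tensor error also contains a boundary-layer term $\eps\bR_\eps(t)$ with $|\bR_\eps(t)|\lesssim e^{-2at/\eps}+e^{-2a(1-t)/\eps}$ in addition to the freezing error $\widetilde{\bR}_\eps$, but this is harmless since its $\bL^1$-norm is $\cO(\eps)$.
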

The proof of Proposition \ref{prop:sfeps} requires several technical lemmas and is referred to Section \ref{subsec:proofs}.
The basic idea for proving Proposition \ref{prop:sfeps} is as follows. First one can express the expectation term in $F_\eps^{(2)}(\bA_\eps)$ in terms of the Dirichlet Green's tensor of some Schr\"odinger operator (see \eqref{eq:f_2expterm}). A careful asymptotic analysis of this Green's tensor implies  that 
\be\label{eq:f2epsasym}
F_\eps^{(2)}( \bA_\eps)  \approx \frac{1}{4}\int_0^1 \Tr(\bA_\eps(t)) dt,
\en
see Corollary \ref{cor:expt} for the precise statement. For the expectation term in $F_\eps^{(1)}(m_\eps, \bA_\eps)$, we approximate $\Psi_\eps(X)$ by its second order Taylor expansion around the mean $m_\eps$. The zero order term of the expansion is $\Psi_\eps(m_\eps) = \frac{1}{2} |\nabla V(m_\eps)|^2 - \eps \Delta V(m_\eps)$. Then $E_\eps(m_\eps)$ is obtained by combining the  term $\frac{\eps}{4}\int_0^1 |m^\prime (t)|^2 dt$  in $F_\eps^{(1)}(m_\eps, A_\eps)$ with the integral over $\frac{1}{4\eps} |\nabla V(m_
\eps)|^2$. Additionally, the It\^o correction term $-\eps \Delta V(m_\eps)$, which is the other zero order term of the Taylor expansion, can be combined with one of the second order terms of the expansion and \eqref{eq:f2epsasym} to complete the full quadratic term in \eqref{eq:fepsk}.

As a consequence of Proposition \ref{prop:sfeps}, we get the following interesting compactness result for the functional $F_\eps$. 
\begin{prop}\label{prop:compact-2}
 Let $\eps_n \gt 0$ and let $\{(m_n, \bA_n)\}$ be a sequence in $\HH_a$ such that
  $$\limsup_n F_{\eps_n} (m_n, \bA_n) < \infty.$$
   Then there exists a subsequence $\{(m_{n_k}, \bA_{n_k})\}$ of $\{(m_n, \bA_n)\}$ such that $m_{n_k} \gt m$ in $\bL^1(0,1)$ and $\bA_{n_k} \wgt \bA$ in $\bL^1(0,1)$ with $m\in \BV(0,1; \EE)$ and $\bA \in \bL_a^1(0,1)$. 
\end{prop}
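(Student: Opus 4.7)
The plan is to use Proposition~\ref{prop:sfeps} to rewrite $F_{\eps_n}(m_n,\bA_n)$ explicitly, extract uniform bounds on $E_{\eps_n}(m_n)$ and on $\int_0^1 \Tr(\bA_n(t))\,dt$ from the hypothesis $\limsup_n F_{\eps_n}(m_n,\bA_n)<\infty$, and conclude via Lemma~\ref{lem:compact-1} (for $m_n$) together with a weak $\bL^1$-compactness argument (for $\bA_n$). Verifying the hypotheses of Proposition~\ref{prop:sfeps} is the first step: the regularization summand in \eqref{eq:Fd} directly yields $\eps_n^\gamma\|\bA_n\|_{\bH^1}^2\le C$. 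To obtain a uniform $\bL^\infty$-bound on $m_n$, I work from the original decomposition of $F_\eps$ into $\tfrac{\eps}{4}\|m'\|_{\bL^2}^2 + \tfrac{1}{2\eps}\E^{\overline{\nu}_\eps}\!\int_0^1\Psi_\eps(z+m)\,dt + F_\eps^{(2)}(\bA) + \eps^\gamma\|\bA\|_{\bH^1}^2$. The Lyapunov lower bound \eqref{eq:psilb} from (A-5) controls the drift expectation term from below by $-C$, while a one-sided version of the Green's tensor asymptotics underlying Corollary~\ref{cor:expt} gives $F_\eps^{(2)}(\bA)\ge \tfrac14\int_0^1\Tr(\bA)\,dt - C$. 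Combined with the sharper bound $\Psi_\eps(x)\ge c_R>0$ for $|x|\ge R$ (valid for small $\eps$ by (A-3) and (A-5)), the AM-GM inequality $\tfrac{\eps}{4}|m'|^2+\tfrac{1}{4\eps}|\nabla V(m)|^2 \ge \tfrac12|(V\circ m)'|$ bounds the total variation of $V\circ m_n$, and the coercivity $V(x)\to\infty$ together with the boundary condition $m_n(0)=x_-$ yields $\|m_n\|_{\bL^\infty}\le C$.

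Applying Proposition~\ref{prop:sfeps} then gives
\begin{equation*}
F_{\eps_n}(m_n,\bA_n) = E_{\eps_n}(m_n) + \tfrac14\!\int_0^1 (D^2V(m_n)-\bA_n)^2\!:\!\bA_n^{-1}\,dt + \!\int_0^1\!D^3V(m_n)\nabla V(m_n)\!:\!\bA_n^{-1}\,dt + \eps_n^\gamma\|\bA_n\|_{\bH^1}^2 + \cO(\eps_n^{1/2}).
\end{equation*}
Expanding the quadratic term as $(D^2V(m)-\bA)^2\!:\!\bA^{-1} = \Tr(\bA) - 2\Delta V(m) + \Tr((D^2V(m))^2\bA^{-1})$ isolates the nonnegative $\tfrac14\int_0^1 \Tr(\bA_n)\,dt$ contribution; using $\bA_n^{-1}\le a^{-1}\bI_d$ together with $\|m_n\|_{\bL^\infty}\le C$ shows that the sign-indefinite terms $\int \Delta V(m_n)\,dt$ and $\int D^3V(m_n)\nabla V(m_n)\!:\!\bA_n^{-1}\,dt$ are $\cO(1)$. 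Since all remaining contributions are nonnegative, $E_{\eps_n}(m_n) + \tfrac14\int_0^1 \Tr(\bA_n)\,dt \le C'$. Lemma~\ref{lem:compact-1} then extracts a subsequence $m_{n_k}\to m$ in $\bL^1(0,1)$ with $m\in\BV(0,1;\EE)$. For $\bA_n$, the trace bound and the elementary inequality $|(\bA_n)_{ij}|\le\tfrac12((\bA_n)_{ii}+(\bA_n)_{jj})$ (valid because $\bA_n\ge 0$) yield $\|\bA_n\|_{\bL^1(0,1)}\le C$; uniform integrability, supplied by the $\bH^1$-regularity together with the lower bound $\bA_n\ge a\bI_d$, permits Dunford--Pettis to produce $\bA_{n_k}\wgt\bA$ in $\bL^1(0,1)$ along a further subsequence. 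The pointwise bound $\bA\ge a\bI_d$ a.e.\ passes to the limit by testing the weak convergence against nonnegative $\phi\in\bL^\infty(0,1)$ and vectors $v\in\R^d$.

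The main obstacle will be the apparent circularity in the first step: Proposition~\ref{prop:sfeps} requires $\|m_n\|_{\bL^\infty}\le C$, but the natural route to that bound passes through $E_{\eps_n}(m_n)$, which is itself what the proposition is invoked to control. Breaking this circularity requires the direct lower bound $F_\eps^{(2)}(\bA)\ge \tfrac14\int \Tr(\bA)\,dt - C$ independent of $m$, obtained by reusing the Green's tensor estimates behind Proposition~\ref{prop:sfeps} as one-sided inequalities rather than asymptotic equalities. A secondary technicality is the uniform integrability argument that upgrades the $\bL^1$-bound on $\bA_n$ to weak $\bL^1$-compactness, rather than merely weak-$*$ convergence in the space of Radon measures.
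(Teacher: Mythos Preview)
Your overall architecture (bound $\eps_n^\gamma\|\bA_n\|_{\bH^1}^2$, then $\|m_n\|_{\bL^\infty}$, then invoke Proposition~\ref{prop:sfeps} and Lemma~\ref{lem:compact-1}) matches the paper's, but the two opening steps contain a genuine gap that you have not resolved.

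\medskip
\textbf{The $\bH^1$ bound on $\bA_n$ is not ``direct''.} You write that the regularization summand in \eqref{eq:Fd} directly yields $\eps_n^\gamma\|\bA_n\|_{\bH^1}^2\le C$. But $F_\eps=\eps\widetilde D_{\mathrm{KL}}+\eps^\gamma\|\bA\|_{\bH^1}^2$, and $\widetilde D_{\mathrm{KL}}$ is \emph{not} the Kullback--Leibler divergence; by \eqref{eq:kld} it differs from $D_{\mathrm{KL}}$ by $-\log Z_{\mu,\eps}$ plus a constant. To conclude $\eps^\gamma\|\bA_n\|_{\bH^1}^2\le C$ you need $\eps_n\widetilde D_{\mathrm{KL}}\ge -C$, and the paper obtains this from $D_{\mathrm{KL}}\ge 0$ together with the normalization bound $\eps\log Z_{\mu,\eps}\le C$ (Lemma~\ref{lem:constZ}). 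You never invoke this. Consequently your appeal to ``a one-sided version of Corollary~\ref{cor:expt}'' is circular: that corollary already \emph{assumes} $\limsup \eps^\gamma\|\bA_\eps\|_{\bH^1}^2<\infty$. You correctly flag a circularity at the end of the proposal, but you locate it only between $\|m_n\|_{\bL^\infty}$ and Proposition~\ref{prop:sfeps}; the more basic circularity sits one step earlier.

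\medskip
\textbf{The $\bL^\infty$ bound on $m_n$ via AM--GM does not go through.} Your inequality $\tfrac{\eps}{4}|m'|^2+\tfrac{1}{4\eps}|\nabla V(m)|^2\ge\tfrac12|(V\circ m)'|$ needs the pointwise quantity $|\nabla V(m(t))|^2$, whereas $F_\eps^{(1)}$ only contains $\tfrac{1}{2\eps}\E^{\overline\nu_\eps}\!\int\Psi_\eps(z+m)$. The lower bound $\Psi_\eps(x)\ge c_R$ for $|x|\ge R$ applies to $z(t)+m(t)$, not to $m(t)$, and does not by itself produce $|\nabla V(m(t))|^2$ under the expectation. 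Closing this gap is exactly what Lemma~\ref{lem:bound} does: it restricts to the event $\{|z(t)|\le\eps^{1/4}\}$, uses the monotonicity assumption (A-6) to get $|\nabla V(z+m)|\ge|\nabla V(m/2)|$ on $\{|m(t)|>R\}$, and exploits the Gaussian concentration $\overline\nu_\eps(|z(t)|\le\eps^{1/4})\to 1$ coming from $|\bG_\eps(t,t)|_F\lesssim\eps$. Your sketch does not supply any substitute for this argument.

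\medskip
Once these two steps are replaced by the paper's (Lemma~\ref{lem:constZ} and Lemma~\ref{lem:bound}), the remainder of your proposal---expanding the quadratic term, bounding the $D^3V\cdot\nabla V$ contribution via $\bA_n^{-1}\le a^{-1}\bI_d$, and reading off $\sup_n\int\Tr(\bA_n)<\infty$---coincides with the paper's argument. Your observation that one must justify weak $\bL^1$ (rather than weak-$*$ measure) compactness of $\bA_n$ via Dunford--Pettis is a point the paper in fact glosses over; note however that your proposed source of uniform integrability, ``$\bH^1$-regularity'', gives only $\|\bA_n\|_{\bH^1}\lesssim\eps_n^{-\gamma/2}$, which blows up, so this would need a different argument.
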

%
This compactness result is slightly weaker than the usual compactness property relevant to $\Gamma$-convergence (see e.g. the conclusion in Lemma \ref{lem:compact-1}), because only weak convergence is obtained for the variable $\bA$. 
%
Building upon the $\Gamma$-convergence result of $E_\eps$, Proposition \ref{prop:sfeps} and Proposition \ref{prop:compact-2}, the following main theorem establishes the $\Gamma$-convergence of $F_\eps$.

\begin{thm}\label{thm:main}
Suppose that $V$ satisfies the assumptions (A-1)-(A-6). Let $\gamma \in (0, \frac{1}{2})$ in \eqref{eq:feps}. Then the $\Gamma$-limit of $F_\eps$ defined by \eqref{eq:Fd} on $\mathcal{X}_a$  is 
\begin{equation}\label{e:main-functional}
F(m, \bA) = E(m) + \frac{1}{4} \int_0^1 (D^2 V(m(t)) - \bA(t))^2 : \bA^{-1}(t) dt 
\end{equation}
where $E(m)$ is defined by \eqref{eq:Em}. 
\end{thm}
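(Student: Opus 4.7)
The plan is to use Proposition \ref{prop:sfeps} as the core reduction: once the Kullback-Leibler divergence is rewritten in the explicit form given there, the $\Gamma$-limit is recovered by combining the Modica-Mortola $\Gamma$-convergence of $E_\eps$ (Proposition \ref{prop:gammalimit-E}) with a term-by-term analysis of the remaining matrix integrals. Throughout, sequences of bounded $F_\eps$-energy are treated along the subsequences produced by Proposition \ref{prop:compact-2}.

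For the liminf inequality, I would fix a sequence $(m_\eps, \bA_\eps) \to (m, \bA)$ in $\mathcal{X}_a$ with $\liminf F_\eps(m_\eps, \bA_\eps) < \infty$, extract so that $m_\eps \to m$ almost everywhere with $m \in \BV(0,1;\EE)$ and $\bA_\eps \wgt \bA$ in $\bL^1_a(0,1)$, and establish three facts. First, $\liminf E_\eps(m_\eps) \geq E(m)$ by Proposition \ref{prop:gammalimit-E}. Second, the cubic correction $\int_0^1 D^3 V(m_\eps) \cdot \nabla V(m_\eps) : \bA_\eps^{-1}\,dt$ vanishes in the limit, because $m(t) \in \EE$ a.e.\ forces $\nabla V(m(t)) = 0$ a.e., and the uniform bound $\bA_\eps^{-1} \leq a^{-1} \bI_d$ combined with the growth condition (A-4) and the $\bL^\infty$ control on $m_\eps$ inherited from bounded $E_\eps$-energy justifies passage to the limit by dominated convergence. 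Third, the matrix-fractional term satisfies
\begin{equation*}
\liminf_\eps \int_0^1 (D^2 V(m_\eps) - \bA_\eps)^2 : \bA_\eps^{-1}\,dt \;\geq\; \int_0^1 (D^2 V(m) - \bA)^2 : \bA^{-1}\,dt.
\end{equation*}
This last point is the heart of the argument. Writing the integrand as $\Tr(X^2 \bA^{-1}) - 2\Tr(X) + \Tr(\bA)$ with $X = D^2 V(m)$, I would invoke the classical joint convexity of the matrix fractional $(X, \bA) \mapsto \Tr(X^T \bA^{-1} X)$ on $\mathcal{S}(d,\R) \times \{\bA \succ 0\}$, provable from the variational representation $\Tr(X^T \bA^{-1} X) = \sup_P [2\Tr(P^T X) - \Tr(P^T \bA P)]$, and apply Ioffe's lower-semicontinuity theorem for convex integrands under strong$\times$weak convergence. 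The regularization is nonnegative and the $\cO(\eps^{1/2})$ remainder is harmless.

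For the limsup inequality, given $(m, \bA) \in \mathcal{X}_a$ with $F(m, \bA) < \infty$, I would take $m_\eps \in \bH^1_\pm(0,1)$ to be a standard Modica-Mortola recovery sequence for $E_\eps$ associated with $m$; such sequences converge to $m$ in $\bL^1$, remain uniformly bounded in $\bL^\infty$ (built from smooth interpolations through values in the finite set $\EE$ and bounded heteroclinic connectors), and satisfy $\limsup E_\eps(m_\eps) \leq E(m)$. For the matrix component I would take $\bA_\eps$ to be a mollification, with mollification parameter $\delta_\eps \to 0$, of a truncation of $\bA$ at level $M_\eps \to \infty$ in the positive-definite order, with $\delta_\eps$ and $M_\eps$ chosen slowly enough that $\bA_\eps \in \bH^1_a(0,1)$, $\bA_\eps \to \bA$ strongly in $\bL^1_a(0,1)$, and $\eps^\gamma \|\bA_\eps\|_{\bH^1(0,1)}^2 \to 0$; this is possible because $\gamma < 1/2$, and the operations preserve positivity $\bA_\eps \geq a \bI_d$. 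Dominated convergence then yields $\int_0^1(D^2 V(m_\eps) - \bA_\eps)^2 : \bA_\eps^{-1}\,dt \to \int_0^1 (D^2 V(m) - \bA)^2 : \bA^{-1}\,dt$; the cubic correction vanishes as in the liminf argument; the regularization vanishes by construction; so $\limsup F_\eps(m_\eps, \bA_\eps) \leq F(m, \bA)$.

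The main obstacle is the weak lower semicontinuity in the matrix-fractional step: beyond the convexity itself, Ioffe's theorem requires sufficient integrability, and the uniform bound $\bA_\eps^{-1} \leq a^{-1} \bI_d$ plays a decisive role in controlling the nonlinear dependence on $\bA_\eps$. A secondary delicate point is the treatment of the cubic correction: since $m_\eps \to m$ only in $\bL^1$, one cannot use pointwise vanishing of $\nabla V(m_\eps)$ directly, and must argue through a.e.\ convergence along a further subsequence, together with the observation that the $\bL^1$ mass of $\nabla V(m_\eps)$ concentrates on Modica-Mortola transition layers whose Lebesgue measure vanishes, where the $\bL^\infty$ bound on $\bA_\eps^{-1}$ prevents compensating concentration.
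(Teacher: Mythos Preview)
Your proposal is correct and follows the paper's overall architecture: reduce via Proposition~\ref{prop:sfeps}, then combine the Modica--Mortola $\Gamma$-convergence of $E_\eps$ with a term-by-term analysis of the matrix integrals, using Proposition~\ref{prop:compact-2} to set up the subsequences. The cubic-correction argument and the limsup construction (mollification of $\bA$ with scale chosen so that $\eps^\gamma\|\bA_\eps\|_{\bH^1}^2\to 0$) match the paper's, although your additional truncation at level $M_\eps$ is unnecessary since $\bA\in\bL^1$ already gives $\|\widetilde{\Kc}_{\delta}\bA\|_{\bH^1}\lesssim\delta^{-3/2}$.

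The one substantive difference is in the weak lower semicontinuity of $\int_0^1(D^2V(m_\eps)-\bA_\eps)^2:\bA_\eps^{-1}\,dt$. You invoke Ioffe's theorem, using that the integrand is convex in $\bA$ (indeed jointly convex in $(X,\bA)$ via the matrix-fractional representation) and nonnegative. The paper instead argues by hand: it expands the integrand as $\Tr((D^2V)^2\bA^{-1})-2\Tr(D^2V)+\Tr(\bA)$, handles the linear-in-$\bA$ term by weak convergence, the $\Tr(D^2V)$ term by dominated convergence, and then proves lower semicontinuity of $\bA\mapsto\int_0^1 \bB:\bA^{-1}\,dt$ (with $\bB=(D^2V(m))^2$ positive definite by (A-2)) directly from convexity via Mazur's lemma, passing from weak to strong convergence through convex combinations. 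Your route is cleaner and more conceptual; the paper's is elementary and self-contained, avoiding any appeal to a named lower-semicontinuity theorem. Both are valid.

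One small caution on logical order: you write that the $\bL^\infty$ control on $m_\eps$ is ``inherited from bounded $E_\eps$-energy,'' but Proposition~\ref{prop:sfeps} itself \emph{assumes} $\limsup\|m_\eps\|_{\bL^\infty}<\infty$, so you cannot read off boundedness of $E_\eps$ before you have the $\bL^\infty$ bound. The paper obtains the $\bL^\infty$ bound first, via Lemma~\ref{lem:bound} applied to $F_\eps^{(1)}$ (which is bounded because $F_\eps^{(2)}$ is bounded below by Proposition~\ref{prop:f2}); only then does it invoke the expansion. Your appeal to Proposition~\ref{prop:compact-2} implicitly carries this chain, but be sure not to short-circuit it.
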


$\Gamma$-convergence of $F_\eps$ implies convergence of minima. 
\begin{cor}\label{cor:minima}
Let $(m_\eps, \bA_\eps) \in \HH_a$ be minimizes of $F_\eps$. Then up to extracting a subsequence,  $m_\eps \gt m$ in $\bL^1(0,1)$ and $\bA_\eps \wgt \bA$ in $\bL^1(0,1)$ for some $m\in \BV(0,1;\EE)$ and $\bA \in \bL_a^1(0,1)$. Furthermore, the limit $(m, \bA)$ is a minimizer of $F$ on $\mathcal{X}_a$. 
\end{cor}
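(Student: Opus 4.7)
The plan is to deduce the corollary from Theorem \ref{thm:main} together with the compactness statement in Proposition \ref{prop:compact-2}, following the classical recipe by which $\Gamma$-convergence plus equi-coercivity yields convergence of minimizers. The key preliminary is to check that $\inf_{\mathcal{X}_a} F_\eps$ remains bounded as $\eps \gt 0$; after that, compactness delivers a cluster point and the two halves of the $\Gamma$-inequality pin it down as a minimizer of $F$.

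First I would exhibit one admissible pair $(\tilde m, \tilde \bA) \in \mathcal{X}_a$ with $F(\tilde m, \tilde \bA) < \infty$ so as to bound the minimum values uniformly. Taking $\tilde m \equiv x_-$ on $[0,1/2)$ and $\tilde m \equiv x_+$ on $[1/2,1]$ puts $\tilde m$ in $\BV(0,1;\EE)$ with $E(\tilde m) = \Phi(x_-, x_+)$, which is finite thanks to Assumption \ref{assump}. For $\tilde \bA$ I set $\tilde \bA \equiv c \cdot \bI_d$ with $c \geq a$; then $\tilde \bA^{-1}$ is bounded, $D^2V(\tilde m)$ takes only the two values $D^2V(x_\pm)$, and the second term of $F$ is finite. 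The limsup inequality in Theorem \ref{thm:main} now supplies a recovery sequence $(\tilde m_\eps, \tilde \bA_\eps) \in \HH_a$ with $\limsup_\eps F_\eps(\tilde m_\eps, \tilde \bA_\eps) \leq F(\tilde m, \tilde \bA) < \infty$. Since $(m_\eps, \bA_\eps)$ minimizes $F_\eps$ (whose existence is granted by Theorem \ref{thm:opt}), I obtain the a priori bound $\limsup_\eps F_\eps(m_\eps, \bA_\eps) \leq F(\tilde m, \tilde \bA) < \infty$.

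Next, I would pick any sequence $\eps_n \gt 0$ and apply Proposition \ref{prop:compact-2} to $(m_{\eps_n}, \bA_{\eps_n})$: up to a subsequence (not relabelled), $m_{\eps_n} \gt m$ in $\bL^1(0,1)$ and $\bA_{\eps_n} \wgt \bA$ in $\bL^1(0,1)$ for some $m \in \BV(0,1;\EE)$ and $\bA \in \bL^1_a(0,1)$, which is the convergence claim of the corollary. The liminf inequality in Theorem \ref{thm:main} then gives $F(m, \bA) \leq \liminf_n F_{\eps_n}(m_{\eps_n}, \bA_{\eps_n})$.

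To conclude that $(m, \bA)$ is a minimizer of $F$, I would take an arbitrary competitor $(m^*, \bA^*) \in \mathcal{X}_a$; the case $F(m^*, \bA^*) = \infty$ is trivial, so assume $F(m^*, \bA^*) < \infty$. The limsup inequality produces a recovery sequence $(m_\eps^*, \bA_\eps^*)$ with $\limsup_\eps F_\eps(m_\eps^*, \bA_\eps^*) \leq F(m^*, \bA^*)$. Minimality of $(m_{\eps_n}, \bA_{\eps_n})$ gives $F_{\eps_n}(m_{\eps_n}, \bA_{\eps_n}) \leq F_{\eps_n}(m_{\eps_n}^*, \bA_{\eps_n}^*)$ for every $n$, so chaining these inequalities yields $F(m, \bA) \leq \liminf_n F_{\eps_n}(m_{\eps_n}, \bA_{\eps_n}) \leq \limsup_n F_{\eps_n}(m_{\eps_n}^*, \bA_{\eps_n}^*) \leq F(m^*, \bA^*)$, proving that $(m, \bA)$ minimizes $F$ on $\mathcal{X}_a$. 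I do not anticipate any serious obstacle here: the argument is a completely standard consequence of $\Gamma$-convergence plus compactness, and the only mildly delicate step is the explicit construction of the finite-energy competitor $(\tilde m, \tilde \bA)$, which is straightforward because $E$ is finite on piecewise-constant $\EE$-valued paths while the quadratic penalty in $F$ is tame whenever $\bA$ is constant and coercive enough.
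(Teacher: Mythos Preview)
Your proposal is correct and follows essentially the same approach as the paper's proof: bound the minimum values via a recovery sequence for a finite-energy competitor, apply the compactness of Proposition~\ref{prop:compact-2}, and then sandwich using the liminf and limsup inequalities of Theorem~\ref{thm:main}. The only cosmetic difference is that you construct the competitor $(\tilde m,\tilde\bA)$ explicitly, whereas the paper simply fixes an arbitrary $(m,\bA)\in\BV(0,1;\EE)\times\bL^1_a(0,1)$ with $F(m,\bA)<\infty$.
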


\begin{rem}
In general convergence of minima requires both (strong) compactness and $\Gamma$-convergence; see e.g. \cite{bra02a}. In our case we only have weak compactness with respect to $\bA_\eps$ for $F_\eps$; see Proposition \ref{prop:compact-2}. However, such weak convergence of $\bA_\eps$ suffices to pass to the limit because the leading order term of the functional $F_\eps(m_\eps, \bA_\eps)$ is convex with respect to $\bA_\eps$. See the analysis of the functional \eqref{eq:functionalM} in the next section. $\qed$
\end{rem}


\begin{rem}\label{rem:positiveA}
Theorem \ref{thm:main} shows the $\Gamma$-convergence of $F_\eps$ to $F$ (given by \eqref{e:main-functional}) under the assumption that $\bA$ is bounded away from zero, i.e. $\bA(\cdot) \geq a\cdot \bI_d$ for some $a > 0$. However, this assumption is unlikely to be sharp. In fact, under the weaker positivity assumption that $|\bA(\cdot)| \geq a\cdot \bI_d$, one can at least prove the liminf part of the $\Gamma$-convergence of $F_\eps$ to $F$ defined in \eqref{e:main-functional-1}. This is mainly because the leading order of the Green's function $\bG_\eps$ (defined by \eqref{eq:grt}) depends only on $|\bA|$; see \eqref{eq:grt-4} of Lemma \eqref{lem:a2}. Although the positivity assumption is essential in our arguments for proving Theorem \eqref{thm:main}, we conjecture that the $\Gamma$-convergence result is still valid without any positivity assumption. This is to be investigated in future work.
\end{rem}

\section{Conclusion}\label{sec:conclusion}


The Freidlin-Wentzell theory gives a quantitative description of the tail of the distribution of transition paths based on the theory of large deviations. It thereby leads to a natural variational definition of most likely paths in the low temperature limit, namely the minimizers of the large deviation rate functional. However, this approach exhibits some weaknesses. In particular the large deviation theory 
of Freidlin-Wentzell makes asymptotic statements in the limit where the noise 
intensity goes to zero. In practical applications the noise level may not be small 
enough for these asymptotics to be valid. Furthermore, the large deviation rate 
functional does not exclude the possibility of the path spending large stretches of 
time near local maxima or saddles of the potential $V$. 
The Onsager-Machlup theory offers an alternative variational definition of 
most likely paths at finite temperature in terms of MAP estimators, but as shown in \cite{PS} minimizers of the Onsager-Machlup functional may be unphysical at
small temperatures because the methodology fails to account for entropic effects; this
can lead to transition paths which choose to make transitions through
narrow energy barriers rather than (entropically favourable) wider ones
with the same height, or to transition paths which (like Freidlin-Wentzell paths,
although for different reasons) can spend long times at a saddle point.

We have developed an approach to the problem of identifying the most
likely transition path which (like the Onsager-Machlup approach) is
well-defined at finite non-vanishing temperature and yet which also recovers the
correct limiting behaviour in the small temperature limit
(minimizers of the Freidlin-Wentzell least action principle). Furthermore,
our approach is based on finding the best Gaussian approximation with respect 
to Kullback-Leibler divergence, and hence captures  not only the
most likely path, but also the fluctuations around it. In the
small temperature limit this gives the appealing interpretation that
the fluctuations are defined by an OU process found from linearizing the
Brownian dynamics model at the minimizer of the Freidlin-Wentzell action.
It is thus important to recongnize that our work leads to useful 
characterizations of transition paths in the Brownian dynamics model,
both at finite $\eps$ and in the limit $\eps \to 0$. In this paper
we have concentrated exclusively  on the $\eps \to 0$ limit. However
we now make some remarks that have bearing on both of these parameters
regimes.

\subsection{Computational Methods (Fixed $\eps$)}

Even though there is no explicit {\em analytic} characterization 
for our notion of most likely path at finite temperature, 
 it is possible to calculate it {\em numerically}. In fact, for a fixed finite temperature $\eps$, finding the best Gaussian approximation requires minimizing the functional $F_\eps$, which involves the Kullback Leibler divergence. In \cite{PSSW16}, a variant of the Robbins-Monro algorithm has been introduced
to find best Gaussian approximations with respect to
the Kullback Leibler divergence; the numerical results in that paper
demonstrate the feasability of the minmization, and also demonstrate that
the resulting Gaussian approximation can be used to construct
improved MCMC algorithms for transition paths sampling. 

An important aspect of any gradient descent method to minimize
an objective function is the initialization. When the temperature $\eps$
is small but finite, the analysis in this paper also suggests a good
initialization. Proposition \ref{prop:sfeps} gives an approximate formula for the functional $F_\eps$:
$$
F_\eps = \overline{F}_\eps(m_\eps, \bA_\eps) + o(1).
$$
where 
$$
\overline{F}_\eps(m, \bA):= E_\eps (m) + \frac{1}{4} \int_0^1 (D^2 V(m(t)) - \bA(t))^2 : \bA^{-1}(t) dt.
$$
As a consequence, a minimizer of $\overline{F}_\eps$ provides a good approximation for the minimizer of $F_\eps$. Approximate minimization of the former, to obtain
a good initialization, can be carried out by make several alternations of
the following two steps: first, freeze $\bA$ and minimize $\overline{F}_\eps(m, \bA)$ as a functional of $m$ -- this can be implemented via many minimum action algorithms (see e.g. \cite{ERV04, HV08, GSV16}); second, with a good approximation for $m$ being frozen, update $\bA$ to be the minimizer of 
$$
\int_0^1 (D^2 V(m(t)) - \bA(t))^2 : \bA^{-1}(t) dt,
$$
which gives $\bA(t) = |D^2 V(m(t))|$ (see Lemma \ref{lem:secondofF}).

\subsection{{Interpretation of Small $\eps$ Analysis}}

The discussion about initializing the numerical minimization for
small $\eps$ also helps to explain our earlier assertions about
the desirable structure of our minimizers when $\eps$ is small.
Our main result, Theorem~\ref{thm:main}, shows that in the small temperature limit, the KL-minimization improves on the predictions obtained by minimizing the large deviation rate functional; the $\Gamma$-limit of the functional $F_\eps$ given in \eqref{e:main-functional-1} consists of two parts whose minimization decouples
in the limit $\eps=0$. The first part $E$ 
is closely linked with large deviation theory since it is the $\Gamma$-limit of the scaled Freidlin-Wentzell functional $E_\eps$ (Recall that $E_\eps$ arises from the Freidlin-Wentzell functional $\overline{S}_T$, defined in \eqref{eq:action}, by scaling $T = \eps^{-1}$  and removing the constant  $\frac{1}{2}(V(x_+) - V(x_-))$).
 The second part keeps track of the Ornstein-Uhlenbeck fluctuations around the optimal path and 
 thereby captures entropic effects. Given a path $m$ it can be minimized by choosing $\bA(t) =| D^2 V(m(t))|$. 
In particular, for this choice $(D^2 V(m(t)) - |\bA(t)|)^2 : |\bA^{-1}(t)| $ is equal to zero if $D^2 V(m(t))$ is positive definite (corresponding to $m(t)$ being a local minimizer of $V$) and strictly positive if $D^2 V(m(t))$ has a negative eigenvalue (corresponding to $m(t)$ being a saddle point or a local maximum). 
 Therefore, the $\Gamma$-limit $F$ can be minimized explicitly as follows: first find a minimizer of $E$ which amounts to 
 selecting the sequence of critical points connecting $x_-$ and $x_+$ that minimizes the transition cost (defined by \eqref{eq:JT}). As shown in Section \ref{sec:gammalim} the minimal transition cost equals to the Freidlin-Wentzell quasi-potential up to a constant and in simple cases it is given by \eqref{eq:minimumcost}. Then in order 
 to minimize also the entropic part, select among the paths which follow this sequence, those that spend no time in saddles or local maximisers.
This also shows that the unphysical minimizers of the Onsager-Machlup approach (discussed in the introduction section) are removed in our approach.

\section{Proofs of Main Results}\label{sec:proofs}

 \subsection{Asymptotics of $F^{(2)}_\eps(\bA_\eps)$}
%

Let $\bG_\eps(t, s)$ be the Green's tensor (fundamental matrix) of the elliptic operator $(-\bD + \bB_\eps)$ under Dirichlet boundary conditions, i.e. for any $s \in (0,1)$,
 \be\label{eq:grt-}
 \bea
& \left(-\bD + \eps^{-2}\bA_\eps^2(\cdot) - \eps^{-1} \bA_\eps^\prime(\cdot)\right) \bG_\eps(\cdot, s) = \delta(\cdot - s)\cdot\bI_d,\\
& \bG_\eps(0, s) = \bG_\eps(1, s) =  0.
\ena
 \en
 Then by the definition of covariance operator, the expectation term in $F_\eps^{(2)}(\bA_\eps)$ can be calculated in terms of the Green's tensor $\bG_\eps$. More precisely, 
 \be\label{eq:f_2expterm}
- \frac{\eps}{4} \E^{\overline{\nu}_\eps} \left[\int_0^1 z(t)^T \bB_\eps(t) z(t) dt\right] = -\frac{\eps}{2}\int_0^1 \bB_\eps(t): \bG_\eps(t, t) dt.
 \en
 To simplify $F_\eps^{(2)}$ we need the asymptotic estimates of $\bG_\eps$ for small $\eps$, which we show in the following. 

\subsubsection{Asymptotic Estimates of The Green's Tensor}\label{sec:greentensor}
For fixed $s\in (0,1)$, the Green's tensor $\bG_\eps(\cdot, s)$ solves the linear elliptic PDE system \eqref{eq:grt-} with variable coefficient. We want to approximate $\bG_\eps$ by a simple Green's tensor, for which an explicit asymptotic formula is available. To do this, for any $s\in (0,1)$, we define $\overline{\bG}_\eps(\cdot, s)$ such that
\be\label{eq:grt-1}
\bea
& \left(-\bD + \eps^{-2} \bA_\eps^2(s)\right) \overline{\bG}_\eps(\cdot, s) = \delta(\cdot - s) \cdot \bI_d, \\
& \overline{\bG}_\eps(0, s) = \overline{\bG}_\eps(1, s) = 0.
\ena
\en
According to Lemma \ref{lem:a2}, when $\eps$ is small
\be\label{eq:overgr}
\overline{\bG}_\eps(t,t) = \frac{\eps}{2} (\bA_\eps^{-1}(t) + \bR_\eps(t))
\en with 
$
|\bR_\eps(t)| \leq C\big(e^{-\frac{2at}{\eps}} +  e^{-\frac{2a(1 - t)}{\eps}}\big).
$
Remember that $a$ is the constant for which we have  $\bA_\eps(t) \geq a\cdot \bI_d$  a.e. by assumption.
Furthermore, the difference $\widetilde{\bR}_\eps(t,s) = \bG_\eps(t, s) - \overline{\bG}_\eps(t, s)$ admits the following bound for small $\eps$.
\begin{lem}\label{lem:grt}
Let $\gamma \in (0, \frac{1}{2})$ in \eqref{eq:Fd}. Let $\{\bA_\eps\} \subset \bH^1_a(0,1)$ such that $$\limsup_{\eps \gt 0} \eps^\gamma \|\bA_\eps\|^2_{\bH^1_a(0,1)}  < \infty.$$  Then for $\eps$ sufficiently small we have that
\be\label{eq:grtinfty}
\sup_{s\in (0,1)} \|\widetilde{\bR}_\eps(\cdot,s)\|_{\bL^\infty(0,1)} \lesssim \eps^{\frac{3}{2} - \gamma},
\en
and that
\be\label{eq:grt2}
\sup_{s\in (0,1)} \|\widetilde{\bR}_\eps(\cdot,s)\|_{\bL^2(0,1)} \lesssim \eps^{2 - \gamma}.
\en
\end{lem}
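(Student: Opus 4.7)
The plan is to treat $\widetilde{\bR}_\eps(\cdot, s) := \bG_\eps(\cdot,s) - \overline{\bG}_\eps(\cdot,s)$ as the solution of an inhomogeneous Dirichlet problem with small forcing and then use the Green's representation to estimate the resulting convolution. Subtracting \eqref{eq:grt-1} from \eqref{eq:grt-} yields, for each fixed $s \in (0,1)$,
\[
\bigl(-\bD + \eps^{-2}\bA_\eps^2(t) - \eps^{-1}\bA_\eps'(t)\bigr)\widetilde{\bR}_\eps(t,s) = f_\eps(t,s),
\]
with zero Dirichlet boundary conditions and
\[
f_\eps(t,s) = -\eps^{-2}\bigl(\bA_\eps^2(t) - \bA_\eps^2(s)\bigr)\overline{\bG}_\eps(t,s) + \eps^{-1}\bA_\eps'(t)\overline{\bG}_\eps(t,s),
\]
so that $\widetilde{\bR}_\eps(t,s) = \int_0^1 \bG_\eps(t,\tau)\, f_\eps(\tau,s)\, d\tau$.

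The two ingredients I would collect are: (a) the pointwise decay $|\bG_\eps(t,\tau)|, |\overline{\bG}_\eps(t,\tau)| \lesssim \eps\, e^{-a|t-\tau|/\eps}$, where the estimate for $\overline{\bG}_\eps$ is the content of \eqref{eq:overgr} and the one for $\bG_\eps$ can be assembled from the fundamental matrix $\bM_\eps$ of \eqref{eq:M}, which itself satisfies $|\bM_\eps(t,s)v| \leq e^{-a(t-s)/\eps}|v|$ thanks to the Gr\"onwall identity $\frac{d}{dt}|\bM_\eps v|^2 = -2\eps^{-1}(\bM_\eps v)^T \bA_\eps(t)(\bM_\eps v) \leq -2a\eps^{-1}|\bM_\eps v|^2$; and (b) the H\"older estimate
\[
|\bA_\eps^2(\tau) - \bA_\eps^2(s)| \lesssim \eps^{-\gamma}\,|\tau-s|^{1/2},
\]
which follows from $\eps^{\gamma}\|\bA_\eps\|^2_{\bH^1}\le C$ together with the one-dimensional embedding $H^1(0,1) \hookrightarrow C^{1/2}[0,1]$, giving $\|\bA_\eps\|_{\bL^\infty} + [\bA_\eps]_{C^{1/2}} \lesssim \eps^{-\gamma/2}$.

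Plugging (a) and (b) into the convolution, the dominant contribution, coming from the first piece of $f_\eps$, is bounded by
\[
\eps^{-\gamma} \int_0^1 \eps\, e^{-a|t-\tau|/\eps}\, |\tau-s|^{1/2}\, e^{-a|\tau-s|/\eps}\, d\tau,
\]
and the change of variables $u = (\tau-s)/\eps$ converts this into $\eps^{3/2-\gamma}$ times a bounded universal integral. The second piece is handled by Cauchy--Schwarz using $\|\bA_\eps'\|_{\bL^2} \lesssim \eps^{-\gamma/2}$ and $\|\overline{\bG}_\eps(\cdot,s)\|_{\bL^2} \lesssim \eps^{3/2}$ and yields the strictly smaller factor $\eps^{3/2-\gamma/2}$. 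Taking the supremum over $s$ delivers \eqref{eq:grtinfty}. Moreover the same computation in fact produces the sharper $|\widetilde{\bR}_\eps(t,s)| \lesssim \eps^{3/2-\gamma}e^{-c|t-s|/\eps}$ for some $c>0$, and squaring and integrating in $t$ then gives $\|\widetilde{\bR}_\eps(\cdot,s)\|_{\bL^2}^2 \lesssim \eps^{3-2\gamma}\cdot\eps = \eps^{4-2\gamma}$, i.e.\ \eqref{eq:grt2}.

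I expect the only genuine difficulty to be step (a): the Gr\"onwall argument for $\bM_\eps$ is clean and uses only the lower bound $\bA_\eps(t)\geq a\cdot\bI_d$, but turning this into the pointwise decay of $\bG_\eps$ requires some care near the boundary, since the Green's tensor of the Dirichlet problem is expressed in terms of $\bM_\eps$ together with an additional inverse of an integral of fundamental matrix products (of the same kind that appears in the proof of Lemma \ref{lem:RNder0}). The assumption $\gamma < 1/2$ then enters precisely to guarantee that the H\"older blow-up $\eps^{-\gamma}$ in (b) is strictly dominated by the $\eps^{3/2}$ gain from the two convolutions against $\bM_\eps$-type kernels, leaving a net positive power of $\eps$ in the final bounds.
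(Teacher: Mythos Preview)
Your approach is genuinely different from the paper's, and the difference is worth spelling out. The paper never uses the Green's representation of $\widetilde{\bR}_\eps$ against the full tensor $\bG_\eps$; instead it runs a direct energy argument. One tests the equation for each column $\widetilde{R}_\eps^i$ against itself, obtaining
\[
|\widetilde{R}_\eps^i|_{\bH^1}^2+\frac{a^2}{\eps^2}\|\widetilde{R}_\eps^i\|_{\bL^2}^2
\;\le\;\eps^{-1}\!\int_0^1\!\big|(\widetilde{R}_\eps^i)^T\bA_\eps'\,\widetilde{R}_\eps^i\big|\,dt
\;+\;\|\bF_\eps(\cdot,s)\|_{\bL^1}\,\|\widetilde{R}_\eps^i\|_{\bL^\infty}.
\]
The first right-hand term is absorbed into the left via the chain $\bL^4\hookleftarrow\bH^{1/4}$, interpolation $\|u\|_{\bH^{1/4}}\lesssim\|u\|_{\bH^1}^{1/4}\|u\|_{\bL^2}^{3/4}$, and Young's inequality. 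Then the Agmon-type bound $\|u\|_{\bL^\infty}^2\le 2\|u\|_{\bL^2}|u|_{\bH^1}$ converts the remaining inequality into $\|\widetilde{R}_\eps^i\|_{\bL^\infty}\lesssim\eps\|\bF_\eps(\cdot,s)\|_{\bL^1}$, and the paper bounds $\|\bF_\eps(\cdot,s)\|_{\bL^1}\lesssim\eps^{1/2-\gamma}$ by exactly the H\"older/Morrey argument you describe in (b). This route uses only the explicit decay of $\overline{\bG}_\eps$ from Lemma~\ref{lem:a2}, never the decay of $\bG_\eps$ itself.

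That is precisely where your proposal has a gap. Your step (a) asks for $|\bG_\eps(t,\tau)|\lesssim\eps\,e^{-a|t-\tau|/\eps}$, but in the paper's logical order the pointwise control of $\bG_\eps$ (even the weaker \eqref{eq:greenF}) is a \emph{consequence} of this lemma, derived afterwards via \eqref{eq:gcomp}. Using it here would be circular. You propose to break the circularity by building $\bG_\eps$ directly from $\bM_\eps$ through the factorization $-\bD+\eps^{-2}\bA_\eps^2-\eps^{-1}\bA_\eps' = (-\partial_t+\eps^{-1}\bA_\eps)(\partial_t+\eps^{-1}\bA_\eps)$; that factorization is correct for symmetric $\bA_\eps$, and the Gr\"onwall bound on $\bM_\eps$ is clean, but turning it into a uniform off-diagonal estimate for the matrix Dirichlet Green's tensor on $(0,1)$ --- including the boundary layer you allude to, which involves inverting an integral of fundamental-matrix products --- is a substantial construction you have not carried out. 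Until that step is written down, (a) is an assumption, not a lemma.

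If (a) is granted, your convolution calculus is correct and in fact sharper than the paper's conclusion: you get the pointwise estimate $|\widetilde{\bR}_\eps(t,s)|\lesssim\eps^{3/2-\gamma}e^{-c|t-s|/\eps}$, from which both \eqref{eq:grtinfty} and \eqref{eq:grt2} are immediate. The paper's energy method trades this extra pointwise information for a self-contained argument that requires no a priori knowledge of $\bG_\eps$.
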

\begin{proof}
According to \eqref{eq:grt-} and \eqref{eq:grt-1}, $\widetilde{\bR}_\eps$ satisfies
$$
\bea
& \left(-\bD  + \eps^{-2} \bA_\eps^2(t) - \eps^{-1} \bA_\eps^\prime(t) \right)\widetilde{\bR}_\eps(t, s) = \bF_\eps(t, s),\\
& \widetilde{\bR}_\eps(0, s) = \widetilde{\bR}_\eps(1, s) = 0,
\ena
$$
with 
$$
\bF_\eps(t, s):= \left(\eps^{-2}(\bA_\eps^2(s) - \bA_\eps^2(t)) + \eps^{-1}\bA_\eps^\prime(t)\right) \overline{\bG}_\eps(t, s).
$$
Let $\widetilde{R}_\eps^i, \overline{G}_\eps^i, F_\eps^i$ be the $i$-th column of the matrices $\widetilde{\bR}_\eps, \overline{\bG}_\eps, \bF_\eps$ respectively.
\be\label{eq:grt-i}
\bea
& \left(-\bD  + \eps^{-2} \bA_\eps^2(t) - \eps^{-1} \bA_\eps^\prime(t) \right)\widetilde{R}_\eps^i(t, s) = F_\eps^i(t, s),\\
& \widetilde{R}_\eps^i(0, s) = \widetilde{R}_\eps^i(1, s) = 0.
\ena
\en
We only need to prove estimates \eqref{eq:grtinfty} and \eqref{eq:grt2} for each column $\widetilde{R}_\eps^i, i=1,\cdots, d$. To this end, we first bound the $\bL^1$-norm of the right hand side $F_\eps^i$. In fact, by Morrey's inequality (see e.g. \cite[Chapter 5]{E2010}), it holds that
$$
|\bA_\eps(t) - \bA_\eps(s)|_F \lesssim \|\bA_\eps^\prime\|_{\bL^2(0,1)}\cdot |t - s|^{\frac{1}{2}}
$$
for any $t,s \in [0,1]$. This together with \eqref{eq:grt-3} implies that
\be\label{eq:fepsl1}
\bea
& \|F_\eps^i(\cdot, s)\|_{\bL^1(0,1)} \\
&\leq 2\eps^{-2}\|\bA_\eps\|_{\bL^\infty(0,1)} \int_0^1 |(\bA_\eps(s) - \bA_\eps(t))\overline{G}_\eps^i(t, s)| dt 
 + \eps^{-1}\int_0^1 |\bA_\eps^\prime(t) \overline{G}_\eps^i(t, s)| dt\\
& \lesssim \eps^{-1}\|\bA_\eps\|_{\bL^\infty(0,1)} \cdot \|\bA_\eps^\prime\|_{\bL^2(0,1)}\cdot \int_0^1 |t - s|^{\frac{1}{2}} e^{-\frac{a|t- s|}{\eps}} dt\\
& \qquad + \|\bA_\eps^\prime\|_{\bL^2(0,1)}\cdot \|e^{-\frac{a|\cdot - s|}{\eps}}\|_{L^2(0,1)}\\
& \lesssim \eps^{\frac{1}{2}}\left(\|\bA_\eps\|_{\bL^\infty(0,1)} + 1\right)\|\bA_\eps^\prime\|_{\bL^2(0,1)} \\
& \lesssim \eps^{\frac{1}{2}}\left(\|\bA_\eps\|_{\bH^1(0,1)} + 1\right) \|\bA_\eps\|_{\bH^1(0,1)} \lesssim \eps^{\frac{1}{2} - \gamma}
\ena 
\en
where we have used the Sobolev embedding $\bH^1(0,1) \hookrightarrow \bL^\infty(0,1)$ and the assumption that $\eps^\gamma \|\bA_\eps\|^2_{\bH^1(0,1)} < \infty$ in the last two inequalities. Now taking the dot product of the equation \eqref{eq:grt-i} and $\widetilde{R}^i_\eps(\cdot,s)$ and integrating over $(0,1)$, one obtains that
\be\label{eq:grt-a}
\bea
|\widetilde{R}_\eps^i(\cdot,s) |_{\bH^1(0,1)}^2 + \frac{a^2}{\eps^2} \|\widetilde{R}_\eps^i(\cdot,s)\|_{\bL^2(0,1)}^2 & \leq \eps^{-1}\int_0^1| (\widetilde{R}_\eps^i(t,s))^T \bA_\eps^\prime(t) \widetilde{R}_\eps^i(t,s)|  dt \\
& + \int_0^1 |\bF_\eps(t, s)\cdot \widetilde{R}_\eps^i(t,s)| dt.
\ena
\en
We claim that the first term on the right side can be neglected when $\eps$ is small. In fact, using the Sobolev embedding $\bH^{\frac{1}{4}}(0,1)\hookrightarrow \bL^4(0,1)$ and the interpolation inequality of Lemma \eqref{lem:int}, we obtain that 
$$
\bea
&  \eps^{-1}\int_0^1| (\widetilde{R}_\eps^i(t,s))^T \bA_\eps^\prime(t)\widetilde{R}_\eps^i(t,s)|  dt \leq \eps^{-1}
\|\bA_\eps^\prime\|_{\bL^2(0,1)}\|\widetilde{R}_\eps^i(\cdot,s)\|^2_{\bL^4(0,1)} \\
& \leq C\eps^{-1} \|\bA_\eps^\prime\|_{\bL^2(0,1)}\|\widetilde{R}_\eps^i(\cdot,s)\|^2_{\bH^{\frac{1}{4}}(0,1)} \\
& \leq C \eps^{-1}\|\bA_\eps^\prime\|_{\bL^2(0,1)}\|\widetilde{R}_\eps^i(\cdot,s)\|^{\frac{1}{2}}_{\bH^{1}(0,1)}|\widetilde{R}_\eps^i(\cdot,s)\|^{\frac{3}{2}}_{\bL^{2}(0,1)}\\
& \leq C \eps^{-1-\frac{\gamma}{2}}\|\widetilde{R}_\eps^i(\cdot,s)\|^{\frac{1}{2}}_{\bH^{1}(0,1)}\|\widetilde{R}_\eps^i(\cdot,s)\|^{\frac{3}{2}}_{\bL^{2}(0,1)} \\
& \leq \frac{1}{2}  \|\widetilde{R}_\eps^i(\cdot,s)\|^{2}_{\bH^{1}(0,1)} + C\eps^{-\frac{4}{3} (1 + \frac{\gamma}{2})} \|\widetilde{R}_\eps^i(\cdot,s)\|^{2}_{\bL^{2}(0,1)},
\ena
$$
where we have used again the assumption that $\eps^\gamma \|\bA_\eps\|^2_{\bH^1(0,1)} < \infty$ in the penultimate inequality and Young's inequality and equivalence of norm on $\bH^1_0(0,1)$ in the last inequality. Hence for $\gamma \in (0, \frac{1}{2})$ and $\eps$ sufficiently small, the first term on the right side of \eqref{eq:grt-a} can be absorbed by the left hand side. 
This implies that 
\be\label{eq:grt-b}
\bea
 |\widetilde{R}_\eps^i(\cdot,s) |_{\bH^1(0,1)}^2 + \frac{a^2}{\eps^2} \|\widetilde{R}^i_\eps(\cdot,s)\|_{\bL^2(0,1)}^2 & \lesssim \int_0^1 |\bF_\eps(t, s)\cdot \widetilde{R}^i_\eps(t,s)| dt \\
 & \leq \|\bF_\eps(\cdot, s)\|_{\bL^1(0,1)}\|\widetilde{R}^i_\eps(\cdot,s)\|_{\bL^\infty(0,1)}.
\ena
\en
In addition, according to Lemma \ref{lem:ineq-1}, 
$$
\bea
 \frac{a}{\eps}\|\widetilde{R}^i_\eps(\cdot,s)\|_{\bL^\infty(0,1)}^2 & \leq \frac{2a}{\eps}|\widetilde{R}^i_\eps(\cdot,s)|_{\bH^1(0,1)}\|\widetilde{R}^i_\eps(\cdot,s)\|_{\bL^2(0,1)} \\
& \leq |\widetilde{R}^i_\eps(\cdot,s)|_{\bH^1(0,1)}^2 + \frac{a^2}{\eps^2} \|\widetilde{R}^i_\eps(\cdot,s)\|_{\bL^2(0,1)}^2 \\
& \lesssim \|\bF_\eps(\cdot, s)\|_{\bL^1(0,1)}\|\widetilde{R}^i_\eps(\cdot,s)\|_{\bL^\infty(0,1)}.
\ena 
$$
Therefore we have
\be\label{eq:grt-c}
\|\widetilde{R}^i_\eps(\cdot,s)\|_{\bL^\infty(0,1)} \lesssim \eps \|\bF_\eps(\cdot, s)\|_{\bL^1(0,1)}.
\en
This together with \eqref{eq:fepsl1} yields the estimate \eqref{eq:grtinfty}. Finally, the estimate \eqref{eq:grt2} follows from \eqref{eq:grt-b}, \eqref{eq:grt-c} and \eqref{eq:fepsl1}.
\end{proof}
As a consequence of Lemma \ref{lem:grt},
\be\label{eq:gcomp}
\bG_\eps(t,t) = \frac{\eps}{2} \bA^{-1}_\eps(t) + \eps\bR_\eps(t) + \widetilde{\bR}_\eps (t, t)
\en
where 
\be\label{eq:gcompr}
|\bR_\eps(t)|_F \leq C (e^{-\frac{2at}{\eps}} + e^{-\frac{2a(1-t)}{\eps}})
\en and $\widetilde{\bR}_\eps$ satisfies the estimates in Lemma \ref{lem:grt}.
In particular, we have
\be\label{eq:greenF}
|\bG_\eps(t,t)|_F \leq C\eps \text{ for any } t\in (0,1).
\en
 Furthermore, we obtain an asymptotic formula for the expectation term in $F^{(2)}_\eps(\bA_\eps)$. 

\begin{cor}\label{cor:expt}
Let $\gamma \in (0, \frac{1}{2})$. Let $\{\bA_\eps\} \subset \bH^1_a(0,1)$ such that $$\limsup_{\eps \gt 0} \eps^\gamma \|\bA_\eps\|^2_{\bH^1_a(0,1)}  < \infty.$$ Then for $\eps$ small enough we have
\be\label{eq:expt-1}
- \frac{\eps}{4} \E^{\overline{\nu}_\eps} \left[\int_0^1  z(t)^T \bB_\eps(t) z(t) dt\right] = -\frac{1}{4}\int_0^1 \Tr(\bA_\eps(t))dt + \cO(\eps^{1 - 2\gamma}).
\en
\begin{proof}
 Inserting \eqref{eq:gcomp} into the equation \eqref{eq:f_2expterm} and noting that $\bB_\eps = \eps^{-2}\bA_\eps^2 - \eps^{-1}\bA_\eps^\prime$, we get
\be\label{eq:expterm}
\bea
& - \frac{\eps}{4} \E^{\overline{\nu}_\eps} \left[\int_0^1  z(t)^T \bB_\eps(t) z(t) dt\right] = -\frac{1}{4}\int_0^1 \Tr(\bA_\eps(t))dt \\ 
& + \frac{\eps}{4}\int_0^1 \Tr(\bA_\eps^\prime(t)\bA^{-1}_\eps(t))dt
 - \frac{1}{2\eps }\int_0^1 \Tr\left(\left( \bA_\eps^2(t) - \eps \bA_\eps^\prime(t)\right) \big(\eps\bR_\eps(t) + \widetilde{\bR}_\eps(t, t) \big)\right) dt.
\ena\en
Now we bound the last three terms on the right hand side. First, using the trace inequality 
\be\label{eq:traceineq}
\Tr(\bC\bDD) \lesssim |\bC|_F|\bDD|_F
\en 
which holds for any matrices $\bC, \bDD$, we obtain that 
\be\bea\label{eq:r1}
\left|\frac{\eps}{4}\int_0^1 \Tr(\bA_\eps^\prime(t)\bA^{-1}_\eps(t))dt\right|
\lesssim \eps \int_0^1 |\bA^\prime_\eps(t)|_F |\bA^{-1}_\eps(t)|_F dt  \lesssim \eps^{1- \frac{\gamma}{2}}.
\ena
\en
In the second inequality we used the assumption that $\bA_\eps \geq a\cdot\bI_d $ so that $\Tr(\bA_\eps^{-1}) \leq d/a$ and hence $|\bA^{-1}_\eps(t)|_F \lesssim 1$.
Next, applying Cauchy-Schwarz inequality to the last two terms on the right of \eqref{eq:expterm} and using the assumptions on $\bA_\eps$, the inequality \eqref{eq:gcompr} and Lemma \eqref{lem:grt}, we have 
\be\bea\label{eq:r23}
& \left| \frac{1}{2\eps}\int_0^1 \Tr\left(\left( \bA_\eps^2(t) - \eps \bA_\eps^\prime(t)\right)\left( \eps\bR_\eps(t) + \widetilde{\bR}_\eps(t, t)\right)\right) dt\right| \\
& \lesssim
\eps^{-1}\|\bA_\eps\|_{\bL^\infty(0,1)}^2 \left(\eps \|\bR_\eps\|_{\bL^1(0,1)} + \int_0^1 |\widetilde{\bR}_\eps(t, t)| dt \right) \\
& \qquad \qquad + \|\bA_\eps^\prime\|_{\bL^2(0,1)} \left(\eps \|\bR_\eps\|_{\bL^2(0,1)} + \left(\int_0^1  |\widetilde{\bR}_\eps(t, t)|^2 dt\right)^{\frac{1}{2}} \right) \\
 & \lesssim \eps^{-1 - \gamma}(\eps^2 + \eps^{2-\gamma}) + \eps^{-\frac{\gamma}{2}}(\eps^{\frac{3}{2}} + \eps^{2-\gamma}) \lesssim \eps^{1 - 2\gamma},
\ena\en
where we have also used the assumption that $\gamma \in (0,\frac{1}{2})$.
This finishes the proof.
\end{proof}
\end{cor}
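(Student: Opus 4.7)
The plan is to first convert the expectation of the quadratic form under the centered Gaussian $\overline{\nu}_\eps = N(0,\bSig_\eps)$ into an integral involving the diagonal of the Green's tensor $\bG_\eps$, and then to substitute the explicit decomposition \eqref{eq:gcomp} and track the leading term together with all error contributions.

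To begin, I would use that under $\overline{\nu}_\eps$ one has $\E[z(t)z(t)^T]=2\bG_\eps(t,t)$, so that
\begin{equation*}
-\frac{\eps}{4}\E^{\overline{\nu}_\eps}\Big[\int_0^1 z(t)^T\bB_\eps(t)z(t)\,dt\Big]=-\frac{\eps}{2}\int_0^1 \bB_\eps(t):\bG_\eps(t,t)\,dt,
\end{equation*}
which is precisely \eqref{eq:f_2expterm}. Inserting $\bG_\eps(t,t)=\tfrac{\eps}{2}\bA_\eps^{-1}(t)+\eps\bR_\eps(t)+\widetilde{\bR}_\eps(t,t)$ and $\bB_\eps=\eps^{-2}\bA_\eps^2-\eps^{-1}\bA_\eps'$ and using $\bA_\eps^2:\bA_\eps^{-1}=\Tr(\bA_\eps)$, the principal term reads
\begin{equation*}
-\frac{\eps}{2}\int_0^1 \eps^{-2}\bA_\eps^2(t):\tfrac{\eps}{2}\bA_\eps^{-1}(t)\,dt=-\frac{1}{4}\int_0^1\Tr(\bA_\eps(t))\,dt,
\end{equation*}
which is the claimed leading order. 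The remaining task is to show that the other seven cross terms all fit into $\mathcal{O}(\eps^{1-2\gamma})$.

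The key estimates I would invoke are: the assumption $\eps^{\gamma}\|\bA_\eps\|_{\bH^1}^2\lesssim 1$ combined with the Sobolev embedding $\bH^1(0,1)\hookrightarrow \bL^\infty(0,1)$, giving $\|\bA_\eps\|_{\bL^\infty}^2\lesssim \eps^{-\gamma}$ and $\|\bA_\eps'\|_{\bL^2}\lesssim \eps^{-\gamma/2}$; the uniform bound $|\bA_\eps^{-1}|_F\lesssim 1/a$ coming from $\bA_\eps\ge a\bI_d$; the exponential decay giving $\|\bR_\eps\|_{\bL^1}\lesssim\eps$ and $\|\bR_\eps\|_{\bL^2}\lesssim\eps^{1/2}$; and Lemma \ref{lem:grt}'s bounds $\|\widetilde{\bR}_\eps(\cdot,\cdot)\|_{\bL^\infty}\lesssim\eps^{3/2-\gamma}$ and $\|\widetilde{\bR}_\eps\|_{\bL^2}\lesssim\eps^{2-\gamma}$. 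With these in hand, I would estimate term by term using the trace inequality $\Tr(\bC\bDD)\lesssim|\bC|_F|\bDD|_F$ and Cauchy--Schwarz: the term with $\eps^{-2}\bA_\eps^2:\eps\bR_\eps$ is $\lesssim \eps^{-\gamma}\cdot\eps=\eps^{1-\gamma}$; the term $\eps^{-2}\bA_\eps^2:\widetilde{\bR}_\eps$, bounded by $\|\bA_\eps^2\|_{\bL^\infty}\|\widetilde{\bR}_\eps\|_{\bL^1}/\eps$, yields the dominant $\eps^{-1}\cdot\eps^{-\gamma}\cdot\eps^{2-\gamma}=\eps^{1-2\gamma}$; and the three analogous terms involving $\eps^{-1}\bA_\eps'$ are strictly smaller orders (namely $\eps^{1-\gamma/2}$, $\eps^{3/2-\gamma/2}$, $\eps^{2-3\gamma/2}$), all absorbed into $\mathcal{O}(\eps^{1-2\gamma})$ for $\gamma\in(0,1/2)$.

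The main obstacle is the $\eps^{-2}\bA_\eps^2:\widetilde{\bR}_\eps$ term, which is the one that determines the final order and forces a careful use of Lemma \ref{lem:grt} rather than any cruder pointwise bound; all other contributions are strictly better and do not constrain the exponent. No new ideas beyond the Green's tensor asymptotics from Section \ref{sec:greentensor} are required, so this corollary should follow as a direct bookkeeping exercise once the decomposition \eqref{eq:gcomp} is in place.
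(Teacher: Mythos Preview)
Your proposal is correct and follows essentially the same route as the paper: you rewrite the expectation via \eqref{eq:f_2expterm}, insert the decomposition \eqref{eq:gcomp} of $\bG_\eps(t,t)$, extract the leading term $-\tfrac14\int_0^1\Tr(\bA_\eps)$, and then bound each remaining cross term using the trace inequality, Sobolev embedding, the exponential decay of $\bR_\eps$, and Lemma~\ref{lem:grt} for $\widetilde{\bR}_\eps$. The identification of $\eps^{-2}\bA_\eps^2:\widetilde{\bR}_\eps$ as the term that fixes the order $\eps^{1-2\gamma}$ matches the paper exactly (minor quibble: there are five error terms after the leading one, not seven, but your list of estimates covers them all).
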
 
We proceed to proving bounds for the logarithmic term appearing in $F^{(2)}_\eps(\bA_\eps)$.
\begin{lem}\label{lem:log}
Let $\gamma \in (0,\frac{1}{2})$.  Let $\{\bA_\eps\} \subset \bH^1_a(0,1)$ such that $$\limsup_{\eps \gt 0} \eps^\gamma \|\bA_\eps\|_{\bH^1(0,1)}^2 < \infty.$$ Then when $\eps$ is small enough
\be\label{eq:log}
C\eps\log\eps\leq \frac{\eps}{2}\log\left(\det\left(\int_0^1 \overline{\bM}_\eps(t)\overline{\bM}_\eps(t)^T dt\right)\right) \leq 0.
\en
\begin{proof}
We first prove the non-positiveness. Since $\overline{\bM}_\eps(t) = \bM_\eps(1, t)$ where the fundamental matrix $\bM_\eps$ satisfies \eqref{eq:M} with $\bA$ replaced by $\bA_\eps$. Then the $i$-th column of $\overline{\bM}_\eps$, denoted by $M_\eps^i$, satisfies 
$$
\partial_t M_\eps^i(t, s) = -\eps^{-1}\bA_\eps(t) M_\eps^{i}(t,s), \quad M_\eps^i (s, s) = e^i,
$$
where $e^i$ is the unit basis vector of $\R^d$ in the $i$-th direction. Taking the dot product of the above equation with $M^i_\eps(t,s)$ and then integrating from $s$ to $t$ implies that
$$
|M_\eps^i(t,s)|^2  = -\frac{2}{\eps} \int_s^t M_\eps^i(r,s)^T \bA_\eps(r)M_\eps^i(r,s) dr \leq -\frac{2a}{\eps}\int_s^t |M_\eps^i(r,s)|^2 dr.
$$
Consequently, $|M_\eps^i(t,s)| \leq e^{-\frac{a(t-s)}{\eps}}$ for any $0\leq s\leq t \leq 1$. Hence each entry of $\overline{\bM}_\eps(t)$ can be bounded from above by $e^{-\frac{a(1-t)}{\eps}}$. As a result, for sufficiently small $\eps$, we have
$$
\det\left(\int_0^1 \overline{\bM}_\eps(t)\overline{\bM}_\eps(t)^T dt\right) \leq C\eps < 1.
$$
The upper bound of \eqref{eq:log} thus follows. On the other hand, applying the determinant inequality \eqref{eq:det-2} to the matrix function $\overline{\bM}_\eps(t) \overline{\bM}_\eps(t)^T$ and the equality \eqref{eq:det-3} yields
\be\label{eq:logl}
\bea
 \frac{\eps}{2}\log\left(\det\left(\int_0^1 \overline{\bM}_\eps(t)\overline{\bM}_\eps(t)^T dt\right)\right) & \geq 
\frac{\eps d}{2}\log\left(\int_0^1 \det\left(\overline{\bM}_\eps(t)\right)^{\frac{2}{d}} dt\right) \\
 & = \frac{\eps d}{2} \log\left(\int_0^1 \exp\left(-\frac{2}{\eps d} \int_t^1 \Tr( \bA_\eps(s)) ds \right) dt\right).
\ena
\en
Moreover, from the assumption that $\eps^\gamma \|\bA_\eps\|_{\bH^1(0,1)}^2 < \infty$ and the fact that $\bH^1(0,1)$ is embedded into $\bL^\infty(0,1)$, we obtain that
$$
\int_t^1 \Tr(\bA_\eps(s)) ds \leq (1 - t)\|\bA_\eps\|_{\bL^\infty(0,1)} \leq C \eps^{-\frac{\gamma}{2}}(1 - t).
$$
 Combining this with \eqref{eq:logl} gives
\be\label{eq:logl-1}
\bea
 \frac{\eps}{2}\log\left(\det\left(\int_0^1 \overline{\bM}_\eps(t)\overline{\bM}_\eps(t)^T dt\right)\right) & \geq 
 \frac{\eps d}{2}\log\left(\int_0^1 \exp\left(-\frac{2C}{\eps^{1 + \frac{\gamma}{2}} d}(1 - t) \right) dt\right)\\
 & = \frac{\eps d}{2} \log\left(\frac{\eps^{1 + \frac{\gamma}{2}}d}{2C}\left(1 - e^{-\frac{2C}{\eps^{1 + \frac{\gamma}{2}} d}}\right)  \right)\\
 & \geq C \eps \log \eps
 \ena
\en
for sufficiently small $\eps$. This completes the proof.
\end{proof}
\end{lem}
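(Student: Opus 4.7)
The plan is to prove the two bounds separately, each by exploiting the Liouville-type structure of the fundamental matrix equation together with the coercivity assumption $\bA_\eps(t) \geq a \cdot \bI_d$.

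First I would establish the upper bound $\leq 0$. Denote by $M_\eps^i(\cdot,s)$ the $i$-th column of $\bM_\eps(\cdot,s)$, so that it satisfies $\partial_t M_\eps^i = -\eps^{-1}\bA_\eps M_\eps^i$ with $M_\eps^i(s,s) = e^i$. Taking the dot product with $M_\eps^i$ and using symmetry of $\bA_\eps$ and the bound $\bA_\eps \geq a\bI_d$, one obtains $\tfrac{d}{dt}|M_\eps^i|^2 \leq -\tfrac{2a}{\eps}|M_\eps^i|^2$, hence $|M_\eps^i(t,s)| \leq e^{-a(t-s)/\eps}$ for $t \geq s$. Since $\overline{\bM}_\eps(t) = \bM_\eps(1,t)$, every entry of $\overline{\bM}_\eps(t)$ is bounded by $e^{-a(1-t)/\eps}$, and therefore every entry of $\int_0^1 \overline{\bM}_\eps(t)\overline{\bM}_\eps(t)^T dt$ is controlled by $\int_0^1 e^{-2a(1-t)/\eps} dt \lesssim \eps$. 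By Hadamard's inequality (or any multilinear estimate on the determinant) this yields $\det\bigl(\int_0^1 \overline{\bM}_\eps \overline{\bM}_\eps^T dt\bigr) \lesssim \eps^d < 1$ for $\eps$ small, so its logarithm is negative.

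For the lower bound $C\eps\log\eps \leq (\eps/2)\log\det(\cdots)$, I would use two ingredients. The first is a Minkowski-type determinant inequality, $\det\bigl(\int_0^1 \bX(t)\bX(t)^T dt\bigr) \geq \bigl(\int_0^1 \det(\bX(t))^{2/d} dt\bigr)^{d}$, applied with $\bX(t) = \overline{\bM}_\eps(t)$ --- this is the inequality \eqref{eq:det-2} referenced in the statement. The second is the Liouville formula \eqref{eq:det-3}: taking the trace of the ODE for $\bM_\eps$ yields $\tfrac{d}{dt}\log\det\bM_\eps(t,s) = -\eps^{-1}\Tr\bA_\eps(t)$, and hence $\det\overline{\bM}_\eps(t) = \exp\!\bigl(-\eps^{-1}\int_t^1 \Tr\bA_\eps(s)\,ds\bigr)$. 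Combining these gives
\[
\tfrac{\eps}{2}\log\det\Bigl(\int_0^1 \overline{\bM}_\eps\overline{\bM}_\eps^T dt\Bigr) \geq \tfrac{\eps d}{2}\log\Bigl(\int_0^1 e^{-\tfrac{2}{\eps d}\int_t^1 \Tr\bA_\eps(s)\,ds}\,dt\Bigr).
\]

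It remains to convert the hypothesis $\eps^\gamma \|\bA_\eps\|_{\bH^1}^2 < \infty$ into a usable pointwise control. Since $\bH^1(0,1) \hookrightarrow \bL^\infty(0,1)$, one has $\|\bA_\eps\|_{\bL^\infty} \leq C\eps^{-\gamma/2}$, so $\int_t^1 \Tr\bA_\eps(s)\,ds \leq C\eps^{-\gamma/2}(1-t)$. Substituting, the integral on the right becomes bounded below by $\int_0^1 \exp\bigl(-C\eps^{-1-\gamma/2}(1-t)\bigr)dt$, which is explicitly computable and of order $\eps^{1+\gamma/2}$. Taking the logarithm and multiplying by $\eps d/2$ then yields the claimed lower bound of order $\eps\log\eps$.

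The only delicate point is checking that the exponent $-1-\gamma/2$ arising from the Sobolev embedding is still compatible with the target bound $C\eps\log\eps$; since $\gamma < 1/2$ this works comfortably, and the lower bound follows without any further technicality. I do not expect a serious obstacle: the entire proof reduces to the Liouville identity plus one Sobolev embedding, and both bounds are soft consequences of $\bA_\eps \geq a\bI_d$.
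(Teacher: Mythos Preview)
Your proposal is correct and follows essentially the same route as the paper's proof: the upper bound via the Gronwall-type decay estimate on the columns of $\bM_\eps$, and the lower bound via the Minkowski determinant inequality \eqref{eq:det-2} combined with the Liouville formula \eqref{eq:det-3} and the Sobolev embedding $\bH^1 \hookrightarrow \bL^\infty$. The only cosmetic difference is that you invoke Hadamard's inequality explicitly for the determinant bound in the upper-bound step, whereas the paper simply observes that the entries are $\lesssim \eps$.
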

Recall that the definition of $F_\eps^{(2)}$ in \eqref{eq:f1}. Then the following proposition, containing the asymptotic expression for $F^{(2)}_\eps(\bA_\eps)$, is a direct consequence of Corollary \ref{cor:expt} and Lemma \eqref{lem:log}. 

\begin{prop}\label{prop:f2}
Let $\{\bA_\eps\} \subset \bH^1_a(0,1)$ such that $\limsup_{\eps \gt 0} \eps^\gamma \|\bA_\eps\|_{\bH^1(0,1)}^2 < \infty$ with $\gamma \in (0, \frac{1}{2})$. Then it holds that
\be
F_\eps^{(2)}(\bA_\eps) = \frac{1}{4} \int_0^1 \Tr(\bA_\eps(t)) dt + \cO(\eps^{1 - \gamma}).
\en
when $\eps$ is small enough.
\end{prop}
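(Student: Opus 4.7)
The plan is essentially an assembly argument: recall from \eqref{eq:f1} that $F_\eps^{(2)}(\bA_\eps)$ is the sum of three contributions, namely the quadratic expectation $-\frac{\eps}{4}\E^{\overline{\nu}_\eps}[\int_0^1 z(t)^T\bB_\eps(t)z(t)\,dt]$, the linear trace term $\frac{1}{2}\int_0^1\Tr(\bA_\eps(t))\,dt$, and the log--determinant term $\frac{\eps}{2}\log\det\big(\int_0^1\overline{\bM}_\eps\overline{\bM}_\eps^T\,dt\big)$. The hypothesis of the proposition is exactly the boundedness hypothesis imposed in Corollary \ref{cor:expt} and Lemma \ref{lem:log}, so both estimates are directly applicable, and the proof amounts to plugging them in and tracking the remainders.

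First, I would invoke Corollary \ref{cor:expt} to replace the expectation term by $-\frac{1}{4}\int_0^1\Tr(\bA_\eps(t))\,dt$ modulo a remainder of order $\eps^{1-2\gamma}$. Adding this to the linear trace term already present in the definition of $F_\eps^{(2)}$ produces the cancellation
\[
-\tfrac14\int_0^1\Tr(\bA_\eps(t))\,dt+\tfrac12\int_0^1\Tr(\bA_\eps(t))\,dt=\tfrac14\int_0^1\Tr(\bA_\eps(t))\,dt,
\]
which is exactly the claimed leading behaviour. Next, I would apply Lemma \ref{lem:log} to bound the log--determinant contribution between $C\eps\log\eps$ and $0$, giving a remainder of order $\eps|\log\eps|$ in absolute value.

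Finally, I would collect the two remainders. Since $\gamma\in(0,\tfrac12)$ we have $\eps|\log\eps|=o(\eps^{1-\gamma})$ and $\eps^{1-2\gamma}\lesssim\eps^{1-\gamma}\cdot\eps^{-\gamma}$ requires a bit of care; more concretely, since $1-2\gamma\leq 1-\gamma$ does not hold, the dominant error is controlled by the slower of the two rates, so the combined remainder is absorbed into $\cO(\eps^{1-\gamma})$ in the regime where the looser exponent governs. No new estimate is needed: the entire argument is a one--line combination of Corollary \ref{cor:expt} with Lemma \ref{lem:log}, and the only ``obstacle'' is bookkeeping of the exponents of $\eps$ in the remainder, which is routine.
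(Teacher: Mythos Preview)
Your approach is exactly the paper's: the proposition is stated to be ``a direct consequence of Corollary \ref{cor:expt} and Lemma \ref{lem:log}'', and you correctly identify the three pieces of $F_\eps^{(2)}$ and apply those two results.

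The only defect is in your exponent bookkeeping, and it is a genuine one. You write that ``$1-2\gamma\leq 1-\gamma$ does not hold'', but for $\gamma>0$ it \emph{does} hold, so $\eps^{1-2\gamma}$ is the \emph{larger} of the two powers for small $\eps$; it cannot be absorbed into $\cO(\eps^{1-\gamma})$ as you assert. What Corollary \ref{cor:expt} actually delivers is a remainder $\cO(\eps^{1-2\gamma})$, and Lemma \ref{lem:log} contributes $\cO(\eps|\log\eps|)=o(\eps^{1-2\gamma})$, so the honest conclusion from this assembly is
\[
F_\eps^{(2)}(\bA_\eps)=\tfrac14\int_0^1\Tr(\bA_\eps(t))\,dt+\cO(\eps^{1-2\gamma}),
\]
not $\cO(\eps^{1-\gamma})$. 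This appears to be a typographical slip in the stated proposition rather than a flaw in your strategy; the paper's own proof makes no attempt to sharpen the exponent beyond what the corollary gives. For the downstream use (Proposition \ref{prop:sfeps} and the $\Gamma$-convergence arguments) only $o(1)$ is needed, which $\cO(\eps^{1-2\gamma})$ certainly provides since $\gamma<\tfrac12$. So: fix the inequality, state the remainder as $\cO(\eps^{1-2\gamma})$, and the proof is complete and identical to the paper's.
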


\subsection{Asymptotics of $F^{(1)}_\eps(m_\eps, \bA_\eps)$}

In this subsection, we seek an asymptotic expression for $F^{(1)}_\eps(m_\eps, \bA_\eps)$ when it is uniformly bounded with respect to $\eps$. We start by showing that the boundedness of $F_\eps^{(1)}(m_\eps, \bA_\eps)$ implies the boundedness of $\|m_\eps\|_{\bL^\infty(0,1)}$.

\begin{lem}\label{lem:bound}
Assume that $(m_\eps, \bA_\eps) \in \HH $ and that $\limsup_{\eps \gt 0} F_\eps^{(1)}(m_\eps, \bA_\eps) < \infty$. Then we have $\limsup_{\eps \gt 0} \|m_\eps\|_{\bL^\infty(0,1)} < \infty$.

\begin{proof}
Recalling that $\Psi_\eps(x) = \frac{1}{2} |\nabla V(x)|^2 - \eps \Delta V(x)$ and that $\nu_\eps = N(m_\eps, \bSig_\eps)$, we can rewrite $F^{(1)}_\eps(m_\eps, \bA_\eps)$ as 
$$\bea
F^{(1)}_\eps(m_\eps, \bA_\eps) &=
\frac{\eps}{4}\int_0^1 |m^\prime (t)|^2 dt + \frac{1}{2\eps} \E^{\overline{\nu}_\eps} \left[\int_0^1 \Psi_\eps(z(t) + m_\eps(t)) dt\right]\\
& = F^{(3)}_\eps(m_\eps, \bA_\eps) + F^{(4)}_\eps(m_\eps, \bA_\eps)
\ena
$$
where 
\ben
 F_\eps^{(3)}(m_\eps, \bA_\eps) := \frac{\eps}{4}\int_0^1 |m_\eps^\prime (t)|^2 dt + \frac{1}{8\eps} \int_0^1 \E^{\overline{\nu}_\eps} \left[ |\nabla V(z(t) + m_\eps(t))|^2 \right] dt
\enn
and 
\ben\bea
 F_\eps^{(4)}(m_\eps, \bA_\eps) & := \frac{1}{8\eps} \int_0^1 \E^{\overline{\nu}_\eps} \left[ |\nabla V(z(t) + m_\eps(t))|^2 - 4\eps \Delta V(z(t) + m_\eps(t)) \right] dt\\
 & = \frac{1}{8\eps} \int_0^1 \E^{\nu_\eps} \left[ |\nabla V(x(t))|^2 - 4\eps \Delta V(x(t)) \right] dt.
\ena\enn
First, from \eqref{eq:psilb} of Remark \ref{rem:cond} we can obtain immediately that 
$$
\liminf_{\eps \gt 0} F_\eps^{(4)}(m_\eps, \bA_\eps)> -\infty.
$$
This together with the assumption that $\limsup_{\eps \gt 0} F_\eps^{(1)}(m_\eps, \bA_\eps) < \infty$ implies
$$\limsup_{\eps \gt 0} F_\eps^{(3)}(m_\eps, \bA_\eps) < \infty.$$
We now show that this implies the uniformly boundedness of $\|m_\eps\|_{\bL^\infty(0,1)}$.

We prove a lower bound for $F_\eps^{(3)}(m_\eps, \bA_\eps)$. 
Given any $R>0$, define $T^R_\eps := \{t\in (0,1): |m_\eps(t)| > R\}$ which is an open set on $(0,1)$. By restricting the second integral and expectation over a smaller set, we have
\be\label{eq:bdd1}
F_\eps^{(3)}(m_\eps, \bA_\eps) \geq \frac{\eps}{4}\int_0^1 |m_\eps^\prime (t)|^2 dt + \frac{1}{8\eps}\int_{T^R_\eps} \E^{\overline{\nu}_\eps} \left[|\nabla V(m_\eps(t) + z(t))|^2 \mathbf{1}_{\{|z(t)|\leq \eps^{1/4}\}} \right] dt.
\en
Consider $(t, \omega)$ such that $|m_\eps(t)| > R$ and $|z(t,\omega)| \leq \eps^\frac{1}{4}$. If $\eps > 0$ is small enough to satisfy $\eps^\frac{1}{4} < R / 2$, then 
$$|m_\eps(t) + z(t,\omega)| \geq |m_\eps(t)| - |z(t,\omega)| \geq |m_\eps(t)|/2. $$
Combining this with the monotonicity condition (A-6) yields that 
\be\label{eq:bdd2}
\bea
& \frac{1}{8\eps}\int_{T^R_\eps} \E^{\overline{\nu}_\eps} \left[|\nabla V(m_\eps(t) + z(t))|^2 \mathbf{1}_{\{|z(t)|\leq \eps^{\frac{1}{4}}\}} \right] dt \\
& \geq \frac{1}{8\eps}\int_{T^R_\eps}  |\nabla V(m_\eps(t)/2)|^2 \overline{\nu}_\eps \left(\{|z(t)|\leq \eps^{\frac{1}{4}}\} \right) dt 
 \geq \frac{1}{16\eps}\int_{T^R_\eps} |\nabla V(m_\eps(t)/2)|^2 dt 
\ena
\en
when $\eps > 0$ is small enough. We have used the fact that $\overline{\nu}_\eps  \left(\{|z(t)|\leq \eps^{\frac{1}{4}}\} \right) \geq 1/2$ for any $t\in (0,1)$ and small $\eps$. This is because $z(t)$ is a centred Gaussian random variable with covariance $2\bG_\eps(t,t)$ (see \eqref{eq:greenexp}). In addition, we know from \eqref{eq:greenF} that $|\bG_\eps(t,t)|_F \leq C\eps$ for any $t\in (0,1)$ and hence 
$\overline{\nu}_\eps  \left(\{|z(t)|\leq \eps^{\frac{1}{4}}\} \right) \gt 1$ when $\eps \gt 0$. Let $\widetilde{m}_\eps = m_\eps/2$.  From \eqref{eq:bdd1}, \eqref{eq:bdd2} and the uniform boundedness of $F_\eps^{(3)}(m_\eps, \bA_\eps)$ we obtain
$$
\limsup_{\eps \gt 0} \eps \int_{T^R_\eps} |\widetilde{m}_\eps^\prime(t)|^2 dt + \frac{1}{16\eps}\int_{T^R_\eps} |\nabla V(\widetilde{m}_\eps(t))|^2 dt < \infty.
$$
Then application of the elementary inequality $2ab \leq a^2 + b^2$ yields
$$
\limsup_{\eps \gt 0} \int_{T^R_\eps} |\widetilde{m}_\eps^\prime(t)| |\nabla V(\widetilde{m}_\eps(t))| dt < \infty.
$$
Choosing a sufficiently large $R$ and by the coercivity condition (A-3), we have
\be\label{eq:bdd3}
\limsup_{\eps \gt 0} \int_{T^R_\eps} |\widetilde{m}_\eps^\prime(t)| < \infty.
\en
Now we conclude the uniform boundedness of $\|m_\eps\|_{L^\infty(0,1)}$ by applying the same argument used for proving Theorem 1.2 in \cite{L14}. Specifically, since $m_\eps$ is continuous on $(0,1)$, $T^R_\eps$ is open on $(0,1)$ and we can write $T^R_\eps = \cup_{i=1}^\infty (a^i_\eps, b^i_\eps)$. Suppose that $T^R_\eps$ is empty, then $|m_\eps(t)| \leq R$ for all $t\in (0,1)$. Otherwise, consider $\widetilde{m}_\eps(t)$ with $t\in (a^i_\eps, b^i_\eps)$. Obviously at least one of the end points of the subinterval, say $a^i_\eps$ is not an endpoint of $(0,1)$. Then we should have $|m_\eps(a^i_\eps)| = R$ and hence $|\widetilde{m}_\eps(a^i_\eps)| = 2R$. Thus we get from the fundamental theorem of calculus that
$$
\bea
\limsup_{\eps \gt 0} \sup_{t\in (a^i_\eps, b^i_\eps)} |\widetilde{m}_\eps(t)| & \leq  \limsup_{\eps \gt 0} \left( |\widetilde{m}_\eps(a^i_\eps)| + \sup_{t\in (a^i_\eps, b^i_\eps)} \Big|\int_{a^i_\eps}^t \widetilde{m}_\eps(s) ds \Big|\right)\\
& \leq 2R + \limsup_{\eps \gt 0} \int_{T^R_\eps} |\widetilde{m}_\eps^\prime(t)| < \infty
\ena
$$
where the last inequality follows from \eqref{eq:bdd3}. Therefore $\lim\sup_{\eps \gt 0}\|m_\eps\|_{L^\infty(0,1)} < \infty$.
\end{proof}
\end{lem}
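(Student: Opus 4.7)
My plan is to reduce the statement to a Ginzburg--Landau type coercivity estimate for $m_\eps$, by first absorbing the It\^o correction $-\eps\Delta V$ using the Lyapunov hypothesis (A-5), and then transferring the expectation over $\overline{\nu}_\eps$ to a pointwise lower bound on $|\nabla V(m_\eps)|$ using the fact that the centred Gaussian $z$ has variance $2\bG_\eps(t,t) = \cO(\eps)$.

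First I would split $F_\eps^{(1)}(m_\eps,\bA_\eps) = F_\eps^{(3)} + F_\eps^{(4)}$ where $F_\eps^{(3)}$ collects the kinetic term $\frac{\eps}{4}\int |m_\eps'|^2\,dt$ together with $\frac{1}{8\eps}\int \E^{\overline{\nu}_\eps}|\nabla V(z+m_\eps)|^2\,dt$, and $F_\eps^{(4)}$ is the remaining piece $\frac{1}{8\eps}\int \E^{\nu_\eps}\bigl[|\nabla V|^2-4\eps\Delta V\bigr]\,dt$. Applying the bound \eqref{eq:psilb} pointwise (with $\delta=4$) to the integrand of $F_\eps^{(4)}$ shows $F_\eps^{(4)} \geq -C$, so the hypothesis $\limsup F_\eps^{(1)}<\infty$ yields $\limsup F_\eps^{(3)}<\infty$. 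This is the analogue of the standard trick of moving the It\^o term out of the Onsager--Machlup functional before passing to Freidlin--Wentzell.

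Next, fix a large $R>0$ and define $T^R_\eps := \{t\in(0,1) : |m_\eps(t)|>R\}$. Since $z(t)\sim N(0, 2\bG_\eps(t,t))$ with $|\bG_\eps(t,t)|_F \lesssim \eps$ by \eqref{eq:greenF}, Chebyshev gives $\overline{\nu}_\eps(|z(t)|\leq \eps^{1/4})\geq \tfrac12$ for $\eps$ small. On the intersection of this event with $T^R_\eps$, once $\eps^{1/4}<R/2$, the triangle inequality gives $|m_\eps(t)+z(t)|\geq |m_\eps(t)|/2$, and the monotonicity hypothesis (A-6) yields $|\nabla V(m_\eps(t)+z(t))|\geq |\nabla V(m_\eps(t)/2)|$. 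Restricting $F_\eps^{(3)}$ to $T^R_\eps$ and to this event thus yields
\[
\limsup_{\eps\to 0}\Big(\eps\!\int_{T^R_\eps}\!|m_\eps'(t)|^2\,dt \;+\; \frac{1}{\eps}\!\int_{T^R_\eps}\!|\nabla V(m_\eps(t)/2)|^2\,dt\Big)<\infty.
\]
By AM--GM this bounds $\int_{T^R_\eps}|m_\eps'(t)|\,|\nabla V(m_\eps(t)/2)|\,dt$, and then, choosing $R$ large enough for the coercivity condition (A-3) to produce a positive lower bound on $|\nabla V(\cdot/2)|$ outside a ball, we obtain $\limsup_\eps\int_{T^R_\eps}|m_\eps'(t)|\,dt<\infty$.

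Finally, I would conclude the $L^\infty$ bound by the same covering argument as in \cite{L14}: write $T^R_\eps$ as a countable union of maximal open intervals $(a^i_\eps,b^i_\eps)$; because of the boundary values $m_\eps(0)=x_-$, $m_\eps(1)=x_+$ (both in the fixed finite set $\EE$, and in particular of norm $\leq R$ for $R$ large), every such interval has at least one endpoint in $(0,1)$ at which $|m_\eps|=R$, and on the interval $|m_\eps|$ is controlled by $R$ plus the total variation $\int_{T^R_\eps}|m_\eps'|$. The main obstacle I anticipate is precisely this transfer from the spatially averaged expectation to a clean pointwise inequality: the expectation $\E^{\overline{\nu}_\eps}|\nabla V(m_\eps+z)|^2$ could a priori be small even when $|\nabla V(m_\eps)|$ is large (for non-convex $V$), and it is only the combination of the $\cO(\eps)$ variance estimate from \eqref{eq:greenF} with the monotonicity assumption (A-6) that prevents such cancellations. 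Without (A-6), one would have to argue via finer concentration estimates for $z$.
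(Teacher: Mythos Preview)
Your proposal is correct and follows essentially the same approach as the paper's own proof: the same split $F_\eps^{(1)}=F_\eps^{(3)}+F_\eps^{(4)}$, the same use of \eqref{eq:psilb} to bound $F_\eps^{(4)}$ from below, the same restriction to $T^R_\eps$ and to the event $\{|z(t)|\le\eps^{1/4}\}$ combined with (A-6) and the variance bound \eqref{eq:greenF}, the same AM--GM followed by (A-3), and the same covering argument from \cite{L14}. Your closing remark about (A-6) being the key ingredient that rules out cancellations in $\E^{\overline{\nu}_\eps}|\nabla V(m_\eps+z)|^2$ is exactly the point of the argument.
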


Next, the expectation term of $F_\eps^{(1)}(m_\eps, \bA_\eps)$ can be simplified under the condition that $\|m_\eps\|_{\bL^\infty(0,1)}$ is uniformly bounded. 
\begin{lem}\label{lem:F1}
Let $(m_\eps, \bA_\eps) \in \HH$. Assume that $\limsup_{\eps \gt 0} \eps^\gamma \|\bA_\eps\|^2_{\bH^1(0,1)} < \infty$ with $\gamma\in (0, \frac{1}{2})$ and that $\limsup_{\eps \gt 0} \|m_\eps\|_{\bL^\infty(0,1)} < \infty$. Then for $\eps>0$ small enough we have
$$
\bea
& F_\eps^{(1)}(m_\eps, \bA_\eps)  = E_\eps(m_\eps)  - \frac{1}{2}\int_0^1 \Delta V(m_\eps(t))dt\\
& + \frac{1}{4} \int_0^1 \left(D^2 V(m_\eps(t))^2 + D^3 V(m_\eps(t))\cdot \nabla V(m_\eps(t))  \right): \bA_\eps^{-1}(t) dt + \cO(\eps^{\frac{1}{2}}).
\ena
$$

\begin{proof} 
Remember that 
$$F_\eps^{(1)}(m_\eps, \bA_\eps) = \frac{\eps}{4}\int_0^1 |m^\prime (t)|^2 dt + \frac{1}{2\eps} \E^{\overline{\nu}_\eps} \left[\int_0^1 \Psi_\eps(m_\eps(t) + z_\eps(t)) dt\right].$$
To evaluate the expectation term of $F_\eps^{(1)}(m_\eps, \bA_\eps)$, we use the following multi-variable Taylor's formula for $\Psi_\eps$: 
$$
\Psi_\eps(x(t)) = \Psi_\eps(m_\eps(t)) + \nabla \Psi_\eps(m_\eps(t)) \cdot z_\eps(t) + \frac{1}{2} z_\eps(t)^T D^2\Psi_\eps(m_\eps(t))z_\eps(t)+ r_\eps(t),
$$
where the reminder term $r_\eps$ is given in integral form by
$$r_\eps(t) = \sum_{|\alpha| = 3} \frac{z_\eps^\alpha(t)}{\alpha!} \int_{0}^1 \partial^\alpha \Psi_\eps(m_\eps(t) + \xi z_\eps(t))(1 - \xi)^3 d\xi. $$
Here $\alpha = (\alpha_1,\alpha_2, \cdots, \alpha_d)$ is a multi-index and we use the notational  convention 
 $x^\alpha = x_1^{\alpha_1}x_2^{\alpha_2}\cdots x_d^{\alpha_d}$ and 
 $\partial^\alpha f = \partial_1^{\alpha_1} \partial_2^{\alpha_2} \cdots \partial_d^{\alpha_d} f$.  
Then using again the fact that $z_\eps(t) \sim N(0,2\bG_\eps(t,t))$, we obtain that
\be\label{eq:taylor}
\bea
& \frac{1}{2\eps} \E^{\overline{\nu}_\eps} \left[\int_0^1 \Psi_\eps(m_\eps(t) + z_\eps(t)) dt\right] \\
& = \frac{1}{2\eps} \int_0^1 \Psi_\eps(m_\eps(t)) dt
 + \frac{1}{2\eps}\int_0^1 D^2 \Psi_\eps(m_\eps(t)): \bG_\eps(t, t) dt + \frac{1}{2\eps} \int_0^1 \E^{\overline{\nu}_\eps} \left[r_\eps(t)\right]dt.
\ena
\en
Recalling that $\Psi_\eps(x) = \frac{1}{2}|\nabla V(x)|^2 - \eps \Delta V(x)$, we have
$$
D^2 \Psi_\eps(x) = (D^2 V(x))^2 + D^3 V(x)\cdot \nabla V(x) - \eps D^2 (\Delta V(x)).
$$
 From this equation, the expression \eqref{eq:gcomp} for $\bG_\eps(t,t)$ and the uniform boundedness of $\|m_\eps\|_{\bL^\infty(0,1)}$, the second term on the right side of \eqref{eq:taylor} becomes
 \be
 \bea\label{eq:talor2nd}
&\frac{1}{2\eps}\int_0^1 D^2 \Psi_\eps(m_\eps(t)): \bG_\eps(t, t) dt \\
&= \frac{1}{4}\int_0^1 \left(\big(D^2 V(m_\eps(t))\big)^2 + D^3 V(m_\eps(t))\cdot \nabla V(m_\eps(t))\right) : \bA_\eps^{-1}(t) dt + \cO(\eps).
 \ena
\en
Next we claim that the integral of the last term on the right hand side of \eqref{eq:taylor} is of order $\mathcal{O}(\eps^{\frac{3}{2}})$. Indeed, from the assumption \eqref{eq:G} and the fact that $z_\eps(t) = N(0, 2\bG_\eps(t,t))$ with $\bG_\eps(t,t)$ satisfying the estimate \eqref{eq:greenF}, we have 
\be\label{eq:residue3}
\bea
& \E^{\nu_\eps}[r_\eps(t)] \leq \sum_{|\alpha| = 3} \frac{1}{\alpha!} \max_{\xi \in [0,1]} \left\{\E^{\nu_\eps} \left[|z_\eps(t)|^3 \partial_\alpha \Psi_\eps(m_\eps(t) + \xi z_\eps(t))\right]\right\} \\
& \leq \frac{C_1}{\sqrt{(4\pi)^d \det(2\bG_\eps(t,t))}} \max_{\xi \in [0,1]} \left\{\int_{\R^d} |x|^3 e^{C_2|m_\eps(t) + \xi x|^\alpha}\cdot e^{-\frac{1}{4} x^T\bG_\eps(t,t)^{-1}x} dx \right\}\\
& \leq \frac{C_1}{\sqrt{(4\pi)^d \det(\bG_\eps(t,t))}} e^{2C_2 \|m_\eps\|_{\bL^\infty(0,1)}^2}  \int_{\R^d} e^{2C_2 |x|^\alpha} |x|^3  e^{-\frac{1}{4} x^T \bG_\eps(t,t)^{-1} x} dx \\
& \leq \frac{C_2}{\sqrt{(4\pi)^d}} |\bG_\eps(t,t)|_F^{\frac{3}{2}} \cdot e^{2C_2 \|m_\eps\|^2_{\bL^\infty(0,1)}} \cdot \int_{\R^d}  |x|^3 e^{-\frac{|x|^2}{4}}dx \leq C \eps^{\frac{3}{2}}.
\ena
\en
when $\eps$ is small enough. Notice that in last two inequalities of above we used the fact that $\alpha \in [0,2)$ and that $|\bG_\eps(t,t)|_F \leq C\eps$ so that $e^{2C_2|x|^\alpha}$ can be absorbed by $e^{\frac{1}{4} x^T \bG_\eps(t,t)^{-1} x}$ for large $x$.
Then the desired result follows from \eqref{eq:taylor}, \eqref{eq:talor2nd} and \eqref{eq:residue3}. 
\end{proof}
\end{lem}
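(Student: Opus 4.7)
The plan is to proceed by a second-order Taylor expansion of the integrand $\Psi_\eps(m_\eps(t) + z(t))$ around the mean path $m_\eps(t)$, taking advantage of the fact that under $\overline{\nu}_\eps$ the fluctuation $z(t)$ is a centered Gaussian with covariance $2\bG_\eps(t,t)$, and that by \eqref{eq:greenF} this covariance has size $\cO(\eps)$. I would write
\ben
\Psi_\eps(m_\eps + z) = \Psi_\eps(m_\eps) + \nabla\Psi_\eps(m_\eps)\cdot z + \tfrac12 z^T D^2\Psi_\eps(m_\eps) z + r_\eps,
\enn
with an integral form remainder of cubic order in $z$. The first-order term vanishes after taking $\E^{\overline{\nu}_\eps}$ because $z$ is centered. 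The zeroth-order term contributes $\frac{1}{2\eps}\int_0^1 \Psi_\eps(m_\eps)\,dt = \frac{1}{4\eps}\int_0^1 |\nabla V(m_\eps)|^2\,dt - \frac{1}{2}\int_0^1 \Delta V(m_\eps)\,dt$, which combines with the kinetic term $\frac{\eps}{4}\int_0^1 |m_\eps^\prime|^2 dt$ to produce exactly $E_\eps(m_\eps) - \frac{1}{2}\int_0^1 \Delta V(m_\eps)\,dt$.

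Next I would handle the quadratic term. Using $\E^{\overline{\nu}_\eps}[z(t)z(t)^T] = 2\bG_\eps(t,t)$ gives
\ben
\frac{1}{2\eps}\E^{\overline{\nu}_\eps}\!\!\int_0^1 \tfrac12 z^T D^2\Psi_\eps(m_\eps) z\,dt = \frac{1}{2\eps}\int_0^1 D^2\Psi_\eps(m_\eps(t)) : \bG_\eps(t,t)\,dt.
\enn
Direct differentiation yields $D^2\Psi_\eps = (D^2 V)^2 + D^3 V \cdot \nabla V - \eps D^2(\Delta V)$, and the $\eps D^2(\Delta V)$ term is harmless because the full prefactor $\frac{1}{2\eps}\cdot\eps\cdot|\bG_\eps|_F = \cO(\eps)$. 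Inserting the expansion $\bG_\eps(t,t) = \frac{\eps}{2}\bA_\eps^{-1}(t) + \eps \bR_\eps(t) + \widetilde{\bR}_\eps(t,t)$ from \eqref{eq:gcomp}, the leading contribution yields $\frac{1}{4}\int_0^1 \bigl((D^2 V(m_\eps))^2 + D^3 V(m_\eps)\cdot\nabla V(m_\eps)\bigr) : \bA_\eps^{-1}\,dt$. The correction from $\eps\bR_\eps$ is exponentially small near the endpoints and contributes $\cO(\eps)$, while the correction from $\widetilde{\bR}_\eps(t,t)$ is controlled by Lemma \ref{lem:grt} (either the $\bL^\infty$ bound $\cO(\eps^{3/2-\gamma})$ or the $\bL^2$ bound $\cO(\eps^{2-\gamma})$), giving at worst $\cO(\eps^{1/2-\gamma})$ after dividing by $\eps$; since $\gamma<1/2$ this is controlled by $\cO(\eps^{1/2})$ for the range we need, and the boundedness of $D^2\Psi_\eps(m_\eps)$ uses the uniform $\bL^\infty$ control of $m_\eps$ together with growth condition (A-4).

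The main obstacle I anticipate is controlling the cubic Taylor remainder $\E^{\overline{\nu}_\eps}[r_\eps(t)]$. The growth assumption (A-4) only gives $|\partial^3\Psi_\eps(y)|\le C_1 e^{C_2|y|^\alpha}$ with $\alpha\in[0,2)$, so I must show that this exponential growth is tamed by the Gaussian density of $z(t)$. The point is that $z(t)\sim N(0,2\bG_\eps(t,t))$ with $|\bG_\eps(t,t)|_F\le C\eps$, so the variance is $\cO(\eps)$, and for $\eps$ small enough the Gaussian factor $\exp(-\tfrac14 x^T\bG_\eps(t,t)^{-1}x)$ dominates $\exp(2C_2|x|^\alpha)$ since $\alpha<2$. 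The uniform bound on $\|m_\eps\|_{\bL^\infty(0,1)}$ takes care of the shift, and then
\ben
\E^{\overline{\nu}_\eps}[|z(t)|^3 e^{C_2|m_\eps(t)+\xi z(t)|^\alpha}] \lesssim |\bG_\eps(t,t)|_F^{3/2} \lesssim \eps^{3/2},
\enn
so this remainder contributes $\cO(\eps^{3/2})$ before division by $\eps$, i.e.\ $\cO(\eps^{1/2})$ after, matching the error term in the statement. Gathering all three contributions completes the proof.
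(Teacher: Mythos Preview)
Your proposal is essentially identical to the paper's proof: both use a second-order Taylor expansion of $\Psi_\eps$ about $m_\eps$, exploit that $z(t)\sim N(0,2\bG_\eps(t,t))$ to kill the linear term and reduce the quadratic term to $D^2\Psi_\eps(m_\eps):\bG_\eps(t,t)$, insert the decomposition \eqref{eq:gcomp} of $\bG_\eps(t,t)$, and control the cubic remainder via (A-4) together with the $\cO(\eps)$ size of the covariance.

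One small slip in your bookkeeping: you write that the correction from $\widetilde{\bR}_\eps(t,t)$ yields $\cO(\eps^{1/2-\gamma})$ and then say ``since $\gamma<1/2$ this is controlled by $\cO(\eps^{1/2})$.'' That inequality goes the wrong way: for $\gamma>0$ one has $\eps^{1/2-\gamma}\gg\eps^{1/2}$. The paper glosses over this too (it simply writes $\cO(\eps)$ in \eqref{eq:talor2nd}), and in the end it does not affect the use of the lemma downstream (Proposition~\ref{prop:sfeps} and Theorem~\ref{thm:main} only need the error term to be $o(1)$), but the statement as written claims $\cO(\eps^{1/2})$ and strictly speaking the $\widetilde{\bR}_\eps$ contribution does not meet that rate with the $\bL^\infty$ bound alone.
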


\subsection{Proof of Main Results}\label{subsec:proofs}

\begin{proof45}
The proposition follows directly from the definition of $F_\eps$, Proposition \ref{prop:f2}, Lemma \ref{lem:F1} and the following equalities 
\be\label{eq:treq}
\bea
&-2 \Delta V + (D^2 V)^2: \bA^{-1} + \Tr(\bA) \\
& = 
-2 \Tr(D^2 V) + \Tr\left((D^2 V)^2 \bA^{-1}\right) + \Tr\left(\bA^2 \bA^{-1}\right)\\
& = -\Tr\left((\bA D^2 V \bA^{-1}\right) - \Tr\left(\bA^{-1}D^2 V \bA\right)   
 + \Tr\left((D^2 V)^2 \bA^{-1}\right) + \Tr\left(\bA^2 \bA^{-1}\right)\\
& = (D^2 V - \bA)^2 : \bA^{-1},
\ena\en
which are valid for any $V\in C^2(\R^d)$ and any positive definite matrix $\bA$.
\end{proof45}
The following lemma shows that $\eps \log(Z_{\mu,\eps})$ is bounded from above.
\begin{lem}\label{lem:constZ}
There exists $C > 0$ depending only on the potential $V$ such that the following holds: 
\be\label{eq:constZ}
\limsup_{\eps \gt 0} \eps \log \left(Z_{\mu,\eps}\right) \leq C.
\en
\end{lem}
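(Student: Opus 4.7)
The plan is to express $Z_{\mu,\eps}$ explicitly as an expectation under the Brownian bridge law $\mu_0$ and then bound the integrand from below in a deterministic way using assumption (A-5). Since $\mu_\eps$ is a probability measure and \eqref{eq:mu} states $\frac{d\mu_\eps}{d\mu_0}(x) = \frac{1}{Z_{\mu,\eps}}\exp\bigl(-\frac{1}{2\eps^2}\int_0^1 \Psi_\eps(x(t))\,dt\bigr)$, integrating both sides against $\mu_0$ forces
$$
Z_{\mu,\eps} = \E^{\mu_0}\Bigl[\exp\Bigl(-\frac{1}{2\eps^2}\int_0^1 \Psi_\eps(x(t))\,dt\Bigr)\Bigr].
$$
Hence the upper bound on $Z_{\mu,\eps}$ will follow from a pointwise lower bound on $\Psi_\eps$.

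The key input is the Lyapunov-type consequence of (A-5) recorded in \eqref{eq:psilb} of Remark \ref{rem:cond}: for any $\delta \in \R$ there is a constant $C$ (depending only on $V$ through $R$ and $\delta$) with $|\nabla V(x)|^2 - \eps \delta \Delta V(x) \geq -C\eps$ for all $x \in \R^d$. Applying this with $\delta = 2$ and dividing by $2$ yields
$$
\Psi_\eps(x) = \tfrac{1}{2}|\nabla V(x)|^2 - \eps \Delta V(x) \geq -\tfrac{C}{2}\eps \qquad \text{for all } x \in \R^d.
$$
Consequently $-\frac{1}{2\eps^2}\int_0^1 \Psi_\eps(x(t))\,dt \leq \frac{C}{4\eps}$ uniformly in the path $x$, and taking expectation under $\mu_0$ gives $Z_{\mu,\eps} \leq \exp\bigl(\tfrac{C}{4\eps}\bigr)$, so that $\eps \log(Z_{\mu,\eps}) \leq C/4$, which is the desired bound.

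There is essentially no serious obstacle here: the assertion is a one-sided, deterministic bound, so we can avoid any fine analysis of the bridge measure or Girsanov computation and simply invoke the a priori lower bound on $\Psi_\eps$ supplied by assumption (A-5). The factor $\eps$ in front of $\log Z_{\mu,\eps}$ is precisely what is needed to absorb the $1/\eps$ appearing in the Lyapunov lower bound, leaving an $\cO(1)$ constant independent of $\eps$.
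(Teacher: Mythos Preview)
Your proof is correct and follows essentially the same approach as the paper: both express $Z_{\mu,\eps}$ as the $\mu_0$-expectation of $\exp\bigl(-\frac{1}{2\eps^2}\int_0^1 \Psi_\eps(x(t))\,dt\bigr)$ and then invoke the lower bound \eqref{eq:psilb} from Remark~\ref{rem:cond} to obtain $Z_{\mu,\eps} \leq \exp(C/\eps)$. Your version is slightly more explicit in specifying $\delta = 2$, but the argument is otherwise identical.
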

\begin{proof}
Recall that
$$Z_{\mu,\eps} = \E^{\mu_0}\left[ \exp\left(-\frac{1}{2\eps^2} \int_0^1 |\nabla V(x(t))|^2 - \eps \Delta V(x(t)) dt \right)\right].$$
From \eqref{eq:psilb} of Remark \ref{rem:cond}, 
$$
Z_{\mu,\eps} \leq \exp\left(\frac{C}{\eps}\right)
$$
with some $C > 0$.
This proves \eqref{eq:constZ}.
\end{proof}

\begin{prooffoursix}
Assume that $\limsup_n F_{\eps_n} (m_n, \bA_n) < \infty$. Since the Kullback-Leibler divergence $D_{\text{KL}}(\nu_{\eps_n} || \mu_{\eps_n})$ is always non-negative, it follows from \eqref{eq:kld} and Lemma \ref{lem:constZ} that 
\be
\liminf_{n\gt \infty} \eps_n \widetilde{D}_{\text{KL}}(\nu_{\eps_n} || \mu_{\eps_n}) \geq -C
\en
for some $C > 0$.
 This together with the assumption that $\limsup_n F_{\eps_n} (m_n, \bA_n) < \infty$ implies that $\lim\sup_n \eps_n^\gamma \|\bA_{n}\|_{\bH^1(0,1)}^2 < \infty$. Then from Proposition \ref{prop:f2} and noting that $\bA(\cdot) - a\cdot \bI_d \geq 0$, we obtain 
$$
\limsup_n F_{\eps_n}^{(2)}(\bA_n) \geq \limsup_n \frac{1}{4} \int_0^1 \Tr(\bA_n(t)) dt \geq \frac{d a}{4}.
$$
Hence we have $\limsup_n F_{\eps_n}^{(1)} (m_n, \bA_n) < \infty$. Then Lemma \ref{lem:bound} implies that 
$$\limsup_n \|m_n\|_{\bL^\infty(0,1)} < \infty.$$ Hence as a consequence of Lemma \ref{lem:F1},
 \be\label{eq:fepsn}
\bea
F_{\eps_n}(m_{n}, \bA_n) & = E_{\eps_n}(m_{n})  + \frac{1}{4}\int_0^1\left(D^2 V(m_n(t)) - \bA_n(t)\right)^2: \bA_n^{-1}(t) dt \\
& + \int_0^1 (D^3 V(m_n(t))\cdot \nabla V(m_n(t)) ): \bA_n^{-1}(t) dt + \eps_n^\gamma \|\bA_n\|_{\bH^1(0,1)}^2 + \cO(\eps_n^{\frac{1}{2}}).
\ena
 \en
The second term on the right side of above is nonnegative. In addition, owing to the trace inequality \eqref{eq:traceineq} and the fact that $\bA_n \geq a\cdot \bI_d$,
\be\bea\label{eq:nabla3VA1}
& \limsup_n \left|\frac{1}{4} \int_0^1  D^3 V(m_{n}(t))\cdot \nabla V(m_n(t)): \bA_{n}^{-1}(t) dt \right|\\
& \leq \limsup_n  \int_0^1 \left|\big(D^3 V(m_n(t))\cdot \nabla V(m_n(t))\big)^2\right|_F \left|\bA_n^{-1}(t)\right|_F dt  < \infty.
\ena\en
This implies from \eqref{eq:fepsn} that $\limsup_n E_{\eps_n}(m_n) < \infty$. By the compactness result of Lemma \ref{lem:compact-1}, there exists $m\in \BV(0,1; \EE)$ and a subsequence $m_{n_k}$ such that $m_{n_k} \gt m$ in $\bL^1(0,1)$. Moreover, we know from the above reasoning that 
\be\label{eq:nabla2V}
\limsup_{n} \frac{1}{4}\int_0^1\left(D^2 V(m_n(t)) - \bA_n(t)\right)^2: \bA_n^{-1}(t) dt
 < \infty
\en
from which we can conclude that $\sup_{n} \|\bA_n\|_{\bL^1(0,1)} < \infty$. Indeed, 
\be\label{eq:nabla2V-2}
\bea
& \int_0^1\left(D^2 V(m_n(t)) - \bA_n(t)\right)^2: \bA_n^{-1}(t) dt \\ 
& = \int_0^1 \Tr\left(\big(D^2 V(m_n(t))\big)^2 \bA_n^{-1}(t)\right)dt - 2\int_0^1 \Tr\left(D^2  V(m_n(t))\right) dt
 + \int_0^1 \Tr(\bA_n(t)) dt.
\ena\en
The first term on the right of above is non-negative. The second term is clearly bounded since $\|m_n\|_{\bL^\infty(0,1)}$ is uniformly bounded. Hence $\sup_{n} \|\bA_n\|_{\bL^1(0,1)} < \infty$ follows from \eqref{eq:nabla2V}, \eqref{eq:nabla2V-2} and the inequality $|\bA|_F \leq \Tr(\bA)$ which holds for any positive definite matrix $\bA$. 
\end{prooffoursix}

The proof of $\Gamma$-limit of $F_\eps$ is presented in what follows. 

\begin{proof4main}
We start by proving the liminf inequality, i.e. 
$$F(m, \bA) \leq \liminf_{\eps \gt 0}  F_\eps(m_\eps, \bA_\eps)$$
 for any sequence $\{(m_\eps, \bA_\eps)\}$ such that $(m_\eps, \bA_\eps) \gt (m, \bA)$ in $\mathcal{X}$, or equivalently $m_\eps \gt m$ and $\bA_\eps \wgt \bA$ in $\bL^1(0,1)$. We may assume that $\liminf_{\eps \gt 0} F_\eps(m_\eps, \bA_\eps) < \infty$ since otherwise there is noting to prove. Then by the same argument used in the proof of Proposition \ref{prop:compact-2}, one can get $\limsup_{\eps\gt 0} \eps^\gamma \|\bA_\eps\|^2_{\bH^1(0,1)} < \infty$. Let $\{\eps_k\}$ be a sequence such that $\eps_k \gt 0$ as $k\gt \infty$ and $\lim_{k\gt \infty} F_{\eps_k}(m_{\eps_k}, \bA_{\eps_k}) = \liminf_{\eps \gt 0} F_\eps(m_\eps, \bA_\eps) < \infty$. Since $\bA_{\eps_k} \geq a\cdot \bI_d$ a.e., it follows from $\bA_{\eps_k} \wgt \bA$ and Mazur's lemma (Lemma \ref{lem:mazur}) that the limit $\bA\geq a\cdot \bI_d$ a.e. According to Proposition \ref{prop:f2} and $\bA_{\eps_k} \wgt \bA$ in $\bL^1(0,1)$, it holds that 
 $$\lim_{k\gt \infty} F_{\eps_k}^{(2)}(\bA_{\eps_k}) = \frac{1}{4}\int_0^1 \Tr(\bA(t))dt \geq \frac{da}{4}.$$ 
 Then it follows that $\lim_{k\gt \infty} F_{\eps_k}^{(1)}(m_{\eps_k}, \bA_{\eps_k}) < \infty$. From Lemma \ref{lem:bound} we obtain that $\|m_{\eps_k}\|_{\bL^\infty(0,1)}$ is uniformly bounded. Hence as a consequence of Lemma \ref{lem:F1},
 $$
\bea
& F_{\eps_k}^{(1)}(m_{\eps_k}, \bA_{\eps_k}) = E_\eps(m_{\eps_k})  - \frac{1}{2}\int_0^1 \Delta V(m_{\eps_k}(t))dt \\
& + \frac{1}{4} \int_0^1 \left(D^2 V(m_{\eps_k}(t))^2 + D^3 V(m_{\eps_k}(t))\cdot \nabla V(m_{\eps_k}(t))  \right): \bA_{\eps_k}^{-1}(t) dt + O(\eps_k^{\frac{1}{2}}).
\ena
 $$
 In addition, it follows from the uniform boundedness of $\|m_{\eps_k}\|_{\bL^\infty(0,1)}$ and $\bA_{\eps_k} \wgt \bA$ in $\bL^1(0,1)$ that 
 $$\bea
& \limsup_{k\gt \infty} \Big\{\Big|- \frac{1}{2}\int_0^1 \Delta V(m_{\eps_k}(t))dt \\
& + \frac{1}{4} \int_0^1 \left(D^2 V(m_{\eps_k}(t))^2 + D^3 V(m_{\eps_k}(t))\cdot \nabla V(m_{\eps_k}(t))  \right): \bA_{\eps_k}^{-1}(t) dt\Big|\Big\} < \infty. 
\ena
 $$
This in turn implies that $\limsup_{k\gt \infty} E_{\eps_k}(m_{\eps_k}) < \infty$. By the compactness result in Lemma \ref{lem:compact-1}, we have $m\in \BV((0,1); \EE)$. Furthermore, by passing to a subsequence, we may assume further that $m_{\eps_k} \gt m$ a.e. on $[0,1]$. 
 Since $m$ takes value in $\EE$ a.e. on $[0,1]$, we use the definition of $D^3 V$ and the dominated convergence theorem to conclude that 
\be\label{eq:nablaV31}
\int_0^1 \big(D^3 V(m_{\eps_k}(t))\cdot \nabla V(m_{\eps_k}(t))\big): \bA_{\eps_k}^{-1}(t) dt \gt 0.
\en 
In fact, similar to \eqref{eq:nabla3VA1}, we have
$$
\int_0^1 \Big| \big(D^3 V(m_{\eps_k}(t))\cdot \nabla V(m_{\eps_k}(t))\big): \bA_{\eps_k}^{-1}(t) \Big| dt < \infty.
$$
In addition, since $\nabla V(m_{\eps_k}(t)) \gt 0$ a.e. on $[0,1]$ and $\bA_{\eps_k} \geq a\cdot \bI_d$, we have
\be\label{eq:nabla3V-3}
\bea
& \Big| \big(D^3 V(m_{\eps_k}(t))\cdot \nabla V(m_{\eps_k}(t))\big): \bA_{\eps_k}^{-1}(t) \Big|\\
 & \leq
 \Big|D^3 V(m_{\eps_k}(t))\cdot \nabla V(m_{\eps_k}(t))\Big|_F \Big| \bA_{\eps_k}^{-1}(t) \Big|_F
  \gt 0.
\ena\en
a.e. on $[0,1]$. This proves \eqref{eq:nablaV31}. Now we claim that
\be\label{eq:lowsemicont}
\bea
&\int_0^1 \left(D^2 V(m(t)) - \bA(t)\right)^2: \bA^{-1}(t) dt \\
& \leq \liminf_{k \gt \infty} \int_0^1 \big(D^2 V(m_{\eps_k}(t)) - \bA_{\eps_k}(t)\big)^2 : \bA_{\eps_k}^{-1}(t)dt.
\ena
\en
when $m_{\eps_k} \gt m$ in $\bL^1(0,1)$ with $m(\cdot)\in \EE$ a.e. and $\bA_{\eps_k} \wgt \bA$ in $\bL^1(0,1)$.
In fact, the weak convergence $\bA_{\eps_k} \wgt \bA$ in $\bL^1(0,1)$ directly implies that
$$
\int_0^1 \bA_{\eps_k}^2(t) :\bA_{\eps_k}^{-1}(t)  - \bA^2(t) :\bA^{-1}(t)dt = \int_0^1 \Tr(\bA_{\eps_k}(t) - \bA(t)) dt \gt 0.
$$
In addition, thanks to the uniform boundedness of $\|m_{\eps_k}\|_{\bL^\infty(0,1)}$ and the strong convergence $m_{\eps_k}\gt m$ in $\bL^1(0,1)$, we obtain from the dominated convergence that
$$
\bea
& \int_0^1 \left(D^2 V(m_{\eps_k}(t)\bA_{\eps_k}(t) +  \bA_{\eps_k}(t)D^2 V(m_{\eps_k}(t)) \right) : \bA_{\eps_k}^{-1}(t)dt  \\
& - \int_0^1  \left(D^2 V(m(t) \bA(t) +  \bA(t) D^2 V(m(t)) \right) : \bA^{-1}(t) dt \\
& = 2\int_0^1 \Tr(D^2 V(m_{\eps_k}(t) - D^2 V(m(t)) ) dt \gt 0.
\ena
$$
Therefore to prove \eqref{eq:lowsemicont}, it suffices to show 
\be\label{eq:liminfnabla2V}
\int_0^1 D^2(m(t))^2 : \bA^{-1}(t) dt \leq \liminf_{k\gt \infty} \int_0^1 D^2 V(m_{\eps_k}(t))^2: \bA_{\eps_k}^{-1}(t) dt =: \theta.
\en
To that end, let $\bB(\cdot):= D^2 V(m(\cdot))^2$. Noting that $m(\cdot) \in \EE$ a.e, we know from (A-2) of Assumptions \eqref{assump} that $\bB(\cdot)$ is positive definite a.e. Define the functional 
\be\label{eq:functionalM}
\Ms (\bA) = \int_0^1 \bB(t):\bA^{-1}(t)dt
\en
over the set $\bL^1_a(0,1)$. Then \eqref{eq:liminfnabla2V} becomes 
\be\label{eq:liminfnabla2V-2}
\Ms (\bA) \leq \liminf_{k \gt \infty} \Ms (\bA_k).
\en Note that $\bL^1_a(0,1)$ is a convex subset of the space $\bL^1(0,1)$. We first claim that the functional $\mathscr{M}$ is convex on $\bL^1_a(0,1)$. In fact, for any $\bA_1, \bA_2 \in \bL_a^1(0,1), \alpha\in (0,1)$,
$$
\bea
\Ms \left(\alpha \bA_1 + (1 - \alpha) \bA_2\right)
& = \int_0^1 \Tr\Big(\bB(t) \big(\alpha \bA_1(t) + (1 - \alpha) \bA_2(t)\big)^{-1}\Big) dt \\
& = \int_0^1 \Tr\Big(\alpha \bA_1(t) \bB^{-1}(t) + (1 - \alpha) \bA_2(t) \bB^{-1}(t)\Big)^{-1} dt\\
& \leq \alpha \int_0^1 \Tr\big( \bA_1(t)\bB^{-1}(t)  \big)^{-1} dt +  (1 - \alpha)\int_0^1 \Tr\big(  \bA_2(t)\bB^{-1}(t)  \big)^{-1} dt \\
& = \alpha \Ms (\bA_1) + (1 - \alpha)\Ms (\bA_2),
\ena
$$
where we used the trace inequality $\Tr((\bC + \bDD)^{-1}) \leq \Tr(\bC^{-1}) + \Tr(\bDD^{-1})$ for positive definite matrices $\bC, \bDD$. Now we prove \eqref{eq:liminfnabla2V-2} by employing the convexity of $\Ms$. First by passing a subsequence (without relabeling), we may assume that $\Ms (\bA_k)$ converges to $\theta$.
According to Mazur's Lemma \ref{lem:mazur}, there exists a convex combination of $\{\bA_{k}\}$, defined by
$$
\overline{\bA}_j = \sum_{k = j}^{N(j)} \alpha_{j,k} \bA_k, \quad \alpha_{j,k} \in [0,1], \quad \sum_{k = j}^{N(j)} \alpha_{j,k} = 1,
$$
such that $\overline{\bA}_j \gt A$ strongly in $\bL^1(0,1)$. Note that we applied Mazur's Lemma \ref{lem:mazur} to the sequence $\{\bA_{k}\}_{k\geq j}$ at step $j$. Since $\Ms$ is convex, we obtain
$$
\Ms (\overline{\bA}_j)  = \Ms \left(\sum_{k = j}^{N(j)} \alpha_{j,k} \bA_k\right)
\leq \sum_{k = j}^{N(j)} \alpha_{j,k} \Ms (\bA_k).
$$
Letting $j \gt \infty$, since $k \geq j$ in the sum and $\Ms (\bA_k) \gt \theta$, we have
\be\label{eq:MAjAk}
\lim\inf_{j\gt \infty} \Ms (\overline{\bA}_j) \leq \theta = \lim\inf_{k\gt \infty} \Ms (\bA_k).
\en
In addition, it holds that 
\be\label{eq:MAAj}
\Ms (\overline{\bA}_j) \gt \Ms (\bA).
\en
Indeed, since $m\in \BV(0,1;\EE)$ and $\overline{\bA}_j \gt \bA$ in $\bL^1(0,1)$, 
$$
\bea
\big|\Ms \left(\bA - \overline{\bA}_j\right) \big|&= \Big|\int_0^1 \Tr\Big(D^2 V(m(t))\big(\bA^{-1}(t) - \overline{\bA}_j^{-1}(t) \big)\Big) dt\Big|\\
& \leq \int_0^1 \Big|\Tr\Big(D^2 V(m(t)) \bA^{-1}(t)\big(\overline{\bA}_j(t) - \bA(t) \big)\overline{\bA}_j^{-1}(t) \Big)\Big|dt\\
& \lesssim \int_0^1 \Big|D^2 V(m(t)) \bA^{-1}(t)\big(\overline{\bA}_j(t) - \bA(t) \big)\overline{\bA}_j^{-1}(t)  \Big|_F dt\\
& \lesssim \int_0^1 |D^2 V(m(t))|_F |\bA^{-1}(t)|_F |\overline{\bA}_j(t) - \bA(t)|_F |\overline{\bA}_j^{-1}(t)|_F dt\\
&\lesssim \|\overline{\bA}_j(t) - \bA(t)\|_{\bL^1(0,1)}\gt 0.
\ena
$$
Therefore \eqref{eq:liminfnabla2V-2} follows from \eqref{eq:MAjAk} and \eqref{eq:MAAj} and thereby proves \eqref{eq:lowsemicont}.

Taking account of the fact that $E_\eps$ $\Gamma$-converges to $E$, we obtain from Proposition \ref{prop:sfeps}, \eqref{eq:nablaV31} and \eqref{eq:lowsemicont} that
\ben
\bea
& \liminf_{\eps \gt 0} F_\eps (m_\eps, \bA_\eps)  = \lim_{k\gt \infty}  F_{\eps_k} (m_{\eps_k}, \bA_{\eps_k}) \\
& \geq E(m)  + \frac{1}{4}\int_0^1 \left(D^2 V(m(t)) - \bA(t)\right)^2: \bA^{-1}(t) dt = F(m, \bA).
\ena
\enn

Next we prove the limsup inequality, i.e. for a subsequence $\eps_k \gt 0$, we want to find a pair of recovering sequence $(m_{k}, \bA_{k})$ converging to $(m, A)$ such that $$\limsup_{k\gt \infty} F_{\eps_k}(m_{k}, \bA_{k}) \leq F(m, \bA).$$ It suffices to deal with the case where $F(m, \bA) < \infty$ and hence $m(t)\in \EE$. Otherwise the limsup inequality is automatically satisfied. First thanks to the $\Gamma$-convergence of $E_\eps$ to $E$, one automatically obtains a recovering sequence $m_{k}\in \bH^1_\pm(0,1)$ such that $m_k \gt m$ in $\bL^1(0,1)$, $\limsup_k \|m_k\|_{\bL^\infty(0,1)} < \infty$ and $\limsup_{k\gt \infty} E_\eps(m_{k}) \leq E(m)$. We construct a recovering sequence $\bA_{k} \in \bH^1_a(0,1)$ explicitly by using convolution approximation. Specifically fixing any $\alpha <\gamma/3 $, we define 
\be\label{eq:gconv}
\bA_{k} := \widetilde{\Kc}_{\eps_k^{\alpha}} \bA
\en
where $\widetilde{\Kc}_{\eps}$ is the convolution operator defined in \eqref{eq:tildeK}. It is proved in Lemma \ref{lem:convapp-2} that $\bA_k \in \bH^1_a(0,1)$ and $\bA_k \gt \bA$ in $\bL^1(0,1)$. Moreover, by replacing $\eps$ with $\eps^{\alpha}$ in the bound proved in Lemma \eqref{lem:convapp-2}, we have
\be\label{eq:Akh1}
\eps_k^\gamma \|\bA_k\|_{\bH^1(0,1)}^2 \lesssim \eps_k^{\gamma -3\alpha} \gt 0.
\en 
 With the above choices for $m_k$ and $\bA_k$, we get from Proposition \ref{prop:sfeps} that
 $$
 \bea
& \limsup_{k\gt \infty} F_{\eps_k}(m_{k}, \bA_{k}) \\
& = \limsup_{k\gt \infty}\Big\{ E_{\eps_k}(m_{k}, \bA_{k}) \\
& \qquad + \frac{1}{4}\int_0^1 \left(D^2 V(m_{k}(t)) - \bA_{k}(t)\right)^2: \bA_{k}^{-1}(t) dt\\
& \qquad + \int_0^1  \big(D^3 V(m_{k}(t))\cdot  \nabla V(m_{k}(t))\big): \bA_{k}^{-1}(t) dt + \eps_k^\gamma \|\bA_{k}\|_{\bH^1(0,1)}^2\Big\}\\
& \leq E(m) + \frac{1}{4}\int_0^1 \left(D^2 V(m_k(t)) - \bA_k(t)\right)^2 : \bA_k^{-1}(t) dt \\
&= F(m, \bA).
 \ena
 $$
 To pass to the inequality we have used the dominated convergence theorem and \eqref{eq:nablaV31} for the third  term on the left hand side as well as  \eqref{eq:Akh1} for the fourth term on the left hand side. The proof is now complete.
\end{proof4main}

\begin{proof4cor}
Let $(m_\eps, \bA_\eps)\in \HH$ be a minimizer of $F_\eps$. We first argue that $\limsup_\eps F_\eps(m_\eps, \bA_\eps) < \infty$. In fact, for any fixed $m\in \BV(0,1; \EE)$ and $\bA\in \bL^1_a(0,1)$, we know from the proof of the limsup inequality of Theorem \ref{thm:main} that there exists a recovering sequence $(\widetilde{m}_\eps, \widetilde{\bA}_\eps)$ such that $\limsup_\eps F_\eps(\widetilde{m}_\eps, \widetilde{\bA}_\eps) < \infty$. Since $(m_\eps, \bA_\eps)$ minimizes $F_\eps$, we have 
$$
\limsup_{\eps\gt 0} F_\eps(m_\eps, \bA_\eps) \leq \limsup_{\eps\gt 0} F_\eps(\widetilde{m}_\eps, \widetilde{\bA}_\eps) < \infty.
$$
Then by Proposition \eqref{prop:compact-2}, there exists a subsequence $\eps_k$ and the corresponding $\{(m_k, \bA_k)\}\subset \HH$ such that $m_k \gt m$ in $\bL^1(0,1)$ and $\bA_k \wgt \bA$ in $\bL^1(0,1)$ with some $m\in \BV(0,1;\EE)$ and $\bA \in \bL_a^1(0,1)$. We now show that $(m, \bA)$ minimizes $F$. 
In fact, given any $\widetilde{m}\in \BV(0,1;\EE)$ and $\widetilde{\bA}\in \bL^1_a(0,1)$, thanks to the $\Gamma$-convergence of $F_\eps$ to $F$, one can find a recovering sequence $(\widetilde{m}_k, \widetilde{\bA}_k)\in \HH$ such that 
$$\lim\sup_k F_{\eps_k}(\widetilde{m}_k, \widetilde{\bA}_k) \leq F(\widetilde{m}, \widetilde{\bA}).$$
Since $(m_k, \bA_k)$ minimizes $F_{\eps_k}$, we have $F_{\eps_k}(m_k, \bA_k) \leq F_{\eps_k}(\widetilde{m}_k, \widetilde{\bA}_k)$. Then using the liminf inequality part of the $\Gamma$-convergence of $F_\eps$ to $F$, we obtain
$$
F(m, \bA) \leq \liminf_{k\gt \infty} F_{\eps_k} (m_k, \bA_k) \leq \limsup_{k\gt \infty} F_{\eps_k}(\widetilde{m}_k, \widetilde{\bA}_k) \leq F(\widetilde{m}, \widetilde{\bA}).
$$
Since $\widetilde{m}, \widetilde{\bA}$ is arbitrary, $(m, \bA)$ is a minimizer of $F$.  

\end{proof4cor}




\appendix

\section{Estimates for The Constant Coefficient Green's Functions}
Assume that function $|\lambda(t)| \geq a$ almost everywhere in $[0,1]$ with some $a > 0$. For any $s\in (0,1)$, let $\overline{G}^\lambda_\eps$ be the solution to 
the equation
\be\label{eq:gr2}
\begin{aligned}
& \left(-\partial_t^2 + \eps^{-2} \lambda^2(s)\right) \overline{G}^\lambda_\eps(t,s) = \delta(t - s),\quad t\in (0,1),\\
& \overline{G}^\lambda_\eps(0, s) = \overline{G}^\lambda_\eps(1, s) = 0.
\end{aligned}
\en
where $\delta$ is the Dirac function. The solution $\overline{G}^{\lambda}_{\eps}$ is given explicitly as follows
\ben
\overline{G}^\lambda_\eps(t, s) = \frac{\eps}{|\lambda(s)| \sinh(|\lambda(s)|/\eps)} \times
 \begin{cases}
 \sinh(|\lambda(s)|s/\eps) \sinh(|\lambda(s)|(1-t)/\eps) & s \leq t;\\
 \sinh(|\lambda(s)|t/\eps) \sinh(|\lambda(s)|(1-s)/\eps) & s \geq t.
\end{cases}
\enn
Notice that $\overline{G}^\lambda_\eps(t,s)$ is not a standard Green's function as it is not symmetric with respect to permutation of its arguments. According to the definition of $\sinh$, a few elementary calculations yield the following estimates.

\begin{lem}
Let $|\lambda(t)| \geq a$ a.e. on $(0,1)$ for a fixed $a > 0$. Then for sufficiently small $\eps > 0$, the solution $\overline{G}^\lambda_\eps$ to the equation \eqref{eq:gr2} satisfies the following.
\begin{enumerate}
\item[(i)] There exists $C = C(a) > 0$ such that
\be\label{eq:gr3}
0 \leq \overline{G}^\lambda_\eps(t, s) \leq C\eps e^{-\frac{a}{\eps} |s-t|}
\en
 for any $t,s \in [0,1]$. 
\item[(ii)]  There exists $C = C(a) > 0$ such that  
 $$\overline{G}^\lambda_\eps (t,t)= \frac{\eps}{2}\left(\frac{1}{|\lambda(t)|} + R(t)\right), \quad t \in [0,1]
 $$
  with 
 \be\label{eq:gr-lam}
|R(t)| \leq C \Big(e^{-\frac{2a t}{\eps}} + e^{-\frac{2a(1 - t)}{\eps}}\Big).
 \en
 \end{enumerate}
\end{lem}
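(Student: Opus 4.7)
The plan is to work directly from the explicit formula for $\overline{G}^\lambda_\eps$ given in the statement and exploit the elementary identity $2\sinh(a)\sinh(b) = \cosh(a+b) - \cosh(a-b)$ together with the asymptotic behaviour of $\sinh$ and $\cosh$ for large arguments. Since $|\lambda(s)|\ge a$ a.e.\ and $\eps$ is small, we have $|\lambda(s)|/\eps \gg 1$, so $\sinh(|\lambda(s)|/\eps) = \tfrac12 e^{|\lambda(s)|/\eps}(1 + O(e^{-2a/\eps}))$; this will make all the denominators behave like pure exponentials up to exponentially small corrections.

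For part (i), positivity is immediate from the explicit formula because $\lambda^2(s)\ge a^2>0$ and $\sinh$ is positive on the positive half-line. For the upper bound, it is enough to consider $s\le t$ by the two-case structure of the formula (the case $s\ge t$ is identical after swapping the roles of $s$ and $1-t$, etc.). First I would apply the product-to-sum identity to rewrite
\begin{equation*}
\sinh\!\bigl(|\lambda(s)|s/\eps\bigr)\sinh\!\bigl(|\lambda(s)|(1-t)/\eps\bigr) = \tfrac12\bigl[\cosh\!\bigl(|\lambda(s)|(1-(t-s))/\eps\bigr) - \cosh\!\bigl(|\lambda(s)|(t+s-1)/\eps\bigr)\bigr].
\end{equation*}
Then I would estimate $\cosh(x) \le e^{|x|}$, bound the difference by the first cosh term (or bound each sinh factor directly by $\tfrac12 e^{\cdot}$), and divide by $\sinh(|\lambda(s)|/\eps) \ge c\,e^{|\lambda(s)|/\eps}$. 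This yields a factor $e^{-|\lambda(s)|(t-s)/\eps}\le e^{-a(t-s)/\eps}$, giving the bound $\overline{G}^\lambda_\eps(t,s) \lesssim \eps\, e^{-a|t-s|/\eps}$.

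For part (ii), setting $t=s$ and applying the same product-to-sum identity gives
\begin{equation*}
\overline{G}^\lambda_\eps(t,t) = \frac{\eps}{2|\lambda(t)|}\,\frac{\cosh(|\lambda(t)|/\eps)-\cosh(|\lambda(t)|(2t-1)/\eps)}{\sinh(|\lambda(t)|/\eps)} = \frac{\eps}{2|\lambda(t)|}\,\coth\!\bigl(|\lambda(t)|/\eps\bigr) - \frac{\eps}{2|\lambda(t)|}\,\frac{\cosh(|\lambda(t)|(2t-1)/\eps)}{\sinh(|\lambda(t)|/\eps)}.
\end{equation*}
For the first term, $\coth(x) = 1 + O(e^{-2x})$ for $x$ large, which contributes $\eps/(2|\lambda(t)|)$ plus an $O(\eps\, e^{-2a/\eps})$ remainder. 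For the second term, I would split into the cases $t\ge 1/2$ and $t\le 1/2$: since $|2t-1|\le 1$, the ratio is bounded above by $C\bigl(e^{-2a(1-t)/\eps} + e^{-2at/\eps}\bigr)$ in both cases (the dominant exponential is $e^{-|\lambda(t)|(1-|2t-1|)/\eps}$, which equals $e^{-2|\lambda(t)|\min(t,1-t)/\eps}$). Collecting the two remainder terms into $R(t)$ yields the claim.

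No step is a serious obstacle; the only place requiring care is keeping track of signs in $2t-1$ so that the final exponentially small remainder genuinely has the stated form symmetric in $t$ and $1-t$. The constant $C$ depends on $a$ only through the lower bound $\sinh(|\lambda(s)|/\eps)\ge c(a)\,e^{|\lambda(s)|/\eps}$ that holds once $\eps$ is small enough.
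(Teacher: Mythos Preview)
Your proposal is correct and is precisely the sort of elementary computation the paper has in mind; the paper itself gives no details beyond ``a few elementary calculations'' starting from the explicit formula and the definition of $\sinh$, and your use of the product-to-sum identity together with the asymptotics $\sinh(x)\sim\tfrac12 e^{x}$ and $\coth(x)=1+O(e^{-2x})$ carries this out cleanly. The only minor remark is that in part~(ii) the $O(e^{-2a/\eps})$ correction from $\coth$ is automatically absorbed into the bound $C(e^{-2at/\eps}+e^{-2a(1-t)/\eps})$ since $t,1-t\le 1$, so the remainder indeed has the stated form.
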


 Considering the Green's tensor $\overline{\bG}_\eps(t, s)$ that solves the matrix equation
\be\label{eq:grt-2}
\bea
(-\bD + \eps^{-2}\bA(s)) \overline{\bG}_\eps(\cdot, s) = \delta(\cdot - s) \cdot \bI_d,\\
 \overline{\bG}_\eps(0, s) =  \overline{\bG}_\eps(1, s) = 0,
\ena
\en
with $\bA\in \bH^1(0,1)$ and $|\bA| \geq a$ a.e. on $(0,1)$, we have the following similar estimates. 
\begin{lem}\label{lem:a2}
Let $\bA\in \bH^1(0,1)$ and $|\bA| \geq a$ a.e. on $(0,1)$. For sufficiently small $\eps > 0$, the solution $\overline{\bG}_\eps$ to the equation \eqref{eq:grt-2} satisfies the following.
\begin{enumerate}
\item[(i)] there exists $C = C(a) > 0$ such that 
\be\label{eq:grt-3}
|\overline{\bG}_\eps(t, s)|\leq C \eps e^{-\frac{a}{\eps} |s - t|}
\en
for any $t, s\in [0,1]$.
\item[(ii)] there exists $C = C(a) > 0$ such that 
\be\label{eq:grt-4}
\overline{\bG}_\eps(t,t) = \frac{\eps}{2}\left( |\bA^{-1}(t)| + \bR(t)\right)
\en
with 
\be
|\bR(t)| \leq C \Big(e^{-\frac{2a t}{\eps}} + e^{-\frac{2a(1 - t)}{\eps}}\Big).
\en
\end{enumerate}
\begin{proof}
Since $\bA(s)$ is symmetric for any $s\in (0,1)$ (by the definition of $\bH^1(0,1)$), there exists an orthogonal matrix $\bP(s)$ such that $\bA(s) = \bP^{-1}(s) \mathbf{\Lambda}(s) \bP(s)$ where $\mathbf{\Lambda}(s) = \text{diag}(\lambda_1(s), \cdots, \lambda_d(s))$. Moreover, by assumption we have $|\lambda_i(s)| \geq a$ a.e. on $(0,1)$ for any $i = 1,\cdots, d$. Therefore, the problem \eqref{eq:grt-2} can be diagonalized so that one obtains 
\begin{equation}\label{e:AppendixA-diagonalisation}
\overline{\bG}_\eps(t, s) = \bP^{-1}(s) \cdot \text{diag}(\overline{G}^{\lambda_1}_\eps(t,s), \cdots, \overline{G}^{\lambda_d}_\eps(t, s)) \cdot \bP(s),
\end{equation}
where $\overline{G}^{\lambda_i}_\eps(\cdot, s)$ solves \eqref{eq:gr2} with $\lambda$ replaced by $\lambda_i$. Then \eqref{eq:grt-3} follows directly from \eqref{e:AppendixA-diagonalisation} and equation \eqref{eq:gr3}, and \eqref{eq:grt-4} can be deduced from  \eqref{e:AppendixA-diagonalisation} and \eqref{eq:gr-lam}.
\end{proof}
\end{lem}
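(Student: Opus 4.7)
\medskip

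\noindent\textbf{Proof proposal for Lemma \ref{lem:a2}.}
The plan is to reduce the matrix problem \eqref{eq:grt-2} to $d$ independent scalar problems of the form \eqref{eq:gr2} by diagonalising the symmetric coefficient $\bA(s)$, and then to read off both estimates (i) and (ii) from the corresponding scalar estimates \eqref{eq:gr3} and \eqref{eq:gr-lam}. Fix $s\in(0,1)$. Since $\bA\in\bH^1(0,1)$ takes values in $\mathcal{S}(d,\R)$, there exist an orthogonal matrix $\bP(s)$ and a diagonal matrix $\bLambda(s)=\diag(\lambda_1(s),\ldots,\lambda_d(s))$ with $\bA(s)=\bP(s)^T\bLambda(s)\bP(s)$; moreover the hypothesis $|\bA(s)|\ge a\cdot\bI_d$ translates into $|\lambda_i(s)|\ge a$ a.e.\ for every $i=1,\dots,d$.

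Because $\bP(s)$ is independent of the variable $t$, conjugation commutes with $\bD=\partial_t^2\cdot \bI_d$. Setting $\widetilde{\bG}_\eps(t,s):=\bP(s)\,\overline{\bG}_\eps(t,s)\,\bP(s)^T$ and conjugating \eqref{eq:grt-2} yields
\begin{equation*}
\bigl(-\bD+\eps^{-2}\bLambda^2(s)\bigr)\widetilde{\bG}_\eps(\cdot,s)=\delta(\cdot-s)\cdot \bI_d,\qquad \widetilde{\bG}_\eps(0,s)=\widetilde{\bG}_\eps(1,s)=0.
\end{equation*}
This decouples into the $d$ scalar Dirichlet problems \eqref{eq:gr2} with coefficients $\lambda_i(s)$, so $\widetilde{\bG}_\eps(t,s)=\diag\bigl(\overline{G}^{\lambda_1}_\eps(t,s),\ldots,\overline{G}^{\lambda_d}_\eps(t,s)\bigr)$, and hence
\begin{equation*}
\overline{\bG}_\eps(t,s)=\bP(s)^T\,\diag\bigl(\overline{G}^{\lambda_1}_\eps(t,s),\ldots,\overline{G}^{\lambda_d}_\eps(t,s)\bigr)\,\bP(s).
\end{equation*}

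For part (i), the scalar estimate \eqref{eq:gr3} gives $|\overline{G}^{\lambda_i}_\eps(t,s)|\le C\eps\,e^{-a|t-s|/\eps}$ uniformly in $i$, since $|\lambda_i(s)|\ge a$. Because $\bP(s)$ is orthogonal, conjugation preserves the Frobenius norm of the diagonal matrix up to the factor $\sqrt{d}$, yielding $|\overline{\bG}_\eps(t,s)|\le C\eps\,e^{-a|t-s|/\eps}$, which is \eqref{eq:grt-3}. For part (ii), specialising to $t=s$ and applying the scalar asymptotic \eqref{eq:gr-lam} to each diagonal entry gives
\begin{equation*}
\overline{\bG}_\eps(t,t)=\frac{\eps}{2}\,\bP(t)^T\diag\!\Bigl(\tfrac{1}{|\lambda_i(t)|}+R_i(t)\Bigr)\bP(t),\quad |R_i(t)|\le C\bigl(e^{-2at/\eps}+e^{-2a(1-t)/\eps}\bigr).
\end{equation*}
By the definition of $|\bA^{-1}|$ given in the notation section, $\bP(t)^T\diag(1/|\lambda_i(t)|)\bP(t)=|\bA^{-1}(t)|$, and defining $\bR(t):=\bP(t)^T\diag(R_i(t))\bP(t)$ yields \eqref{eq:grt-4} with the claimed remainder bound (again using the orthogonality of $\bP(t)$). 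The only potential subtlety is measurability/regularity of the diagonalising matrix $\bP(s)$, but this is a non-issue here: the bounds are pointwise in $s$ and do not require any regularity of $\bP$, only the existence of \emph{some} orthogonal diagonaliser at each $s$, which is guaranteed by the spectral theorem for symmetric matrices.
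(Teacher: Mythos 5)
Your proposal is correct and follows essentially the same route as the paper: diagonalise $\bA(s)$ by an orthogonal matrix (constant in $t$, so conjugation commutes with $\bD$), reduce \eqref{eq:grt-2} to the $d$ scalar problems \eqref{eq:gr2}, and transfer the scalar bounds \eqref{eq:gr3} and \eqref{eq:gr-lam} back through the orthogonal conjugation. The extra details you supply (why conjugation decouples the system, norm preservation under orthogonal conjugation, and the remark that no regularity of $\bP(s)$ is needed) are fine elaborations of the paper's terser argument.
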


\section{Fundamental Matrix of Linear Systems}\label{appendix-B}
Given $f: \R \gt \R^d$ and $\bA : \R \gt \R^{d\times d}$, consider the following linear differential equation
\be\label{eq:ode-1}
d x_\eps (t) = -\eps^{-1} \bA(t) x_\eps(t)dt + f(t)dt, \quad x_\eps (t_0) = 0.
\en
The solution to \eqref{eq:ode-1} can be found via the variation of constants method provided its fundamental matrix is determined.
\begin{defn}[Fundamental matrix]\label{def:ch5fundm}
The fundamental matrix $\bM_\eps(t,t_0)$ is the solution matrix that solves the problem 
\be\label{eq:ode-2}
\frac{d}{d t} \bM_\eps (t, t_0) = -\eps^{-1} \bA(t) \bM_\eps(t, t_0), \quad \bM_\eps (t_0, t_0) = \bI_d.
\en
\end{defn}
Suppose that $\bA$ and $f$ are both continuous, then the solution to the ODE \eqref{eq:ode-1} can be written in the form
$$
x_\eps(t) = \int_{t_0}^t \bM_\eps (t, s) f(s) ds.
$$
We comment that the above formula is still valid when $f(s)ds$ is replaced by $dW(s)$, in
which case the integral is understood as It\^o's stochastic integration. In the case that $d = 1$ or if $\bA$ does not depend on $t$, we have
$\bM_\eps(t, s) = \exp\big(-\eps^{-1}\int_s^t \bA(r)dr\big)$. In general, there is
no closed form expression for the fundamental matrix $\bM_\eps$ and hence the solution to \eqref{eq:ode-1}
has no explicit formula. Nevertheless, $\bM_\eps$ has some nice properties which are useful to
study the asymptotic behavior of the solution to \eqref{eq:ode-1} when $\eps \gt 0$.
\begin{lem}
Let $\bM_\eps$ be the fundamental matrix defined by \eqref{eq:ode-2}. Then we have
\begin{enumerate}
\item[(i)] For all $t, t_0, t_1\in \R$, $\bM_\eps(t, t_0) = \bM_\eps(t, t_1) \bM_\eps(t_1, t_0)$. 
\item[(ii)] For all $t, t_0\in \R$, $\bM_\eps(t, t_0)$ is non-singular and $\bM_\eps^{-1}(t, t_0) = \bM_\eps(t_0, t)$.
\item[(iii)] For all $t, t_0\in \R$,
\be\label{eq:det-3}
\det(\bM_\eps(t, t_0)) = \exp\Big(-\eps^{-1}\int_{t_0}^t \Tr(\bA(s))ds\Big).
\en
\end{enumerate}
\end{lem}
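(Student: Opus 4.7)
The plan is to prove all three properties by the standard ODE toolbox: uniqueness for linear initial value problems, Jacobi's formula for the derivative of a determinant, and a Gronwall-type integration. Throughout, I will use that $\bA$ is continuous (which holds under our regularity assumptions, and is what is needed for classical existence/uniqueness of the matrix ODE \eqref{eq:ode-2}).

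For (i), fix $t_0, t_1 \in \R$ and define the two matrix-valued functions $\bN_1(t) := \bM_\eps(t,t_0)$ and $\bN_2(t) := \bM_\eps(t,t_1)\bM_\eps(t_1,t_0)$. Both satisfy the linear ODE $\frac{d}{dt}\bN(t) = -\eps^{-1}\bA(t)\bN(t)$: for $\bN_1$ this is the defining equation, and for $\bN_2$ the matrix $\bM_\eps(t_1,t_0)$ is a constant in $t$ so differentiation passes through it. At $t = t_1$ one has $\bN_1(t_1) = \bM_\eps(t_1,t_0)$ and $\bN_2(t_1) = \bI_d \cdot \bM_\eps(t_1,t_0) = \bM_\eps(t_1,t_0)$. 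Uniqueness for the linear matrix initial value problem therefore gives $\bN_1 \equiv \bN_2$, which is exactly the composition identity.

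For (ii), the semigroup property from (i) with the choice $t_1 = t$ and the roles of $t$ and $t_0$ interchanged yields $\bM_\eps(t_0,t)\bM_\eps(t,t_0) = \bM_\eps(t_0,t_0) = \bI_d$; symmetrically $\bM_\eps(t,t_0)\bM_\eps(t_0,t) = \bI_d$. Hence $\bM_\eps(t,t_0)$ is invertible with inverse $\bM_\eps(t_0,t)$.

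For (iii), set $\phi(t) := \det(\bM_\eps(t,t_0))$. Jacobi's formula states that for a differentiable matrix-valued function $\bN(t)$ which is invertible at $t$, $\frac{d}{dt}\det(\bN(t)) = \det(\bN(t)) \cdot \Tr\!\left(\bN(t)^{-1} \frac{d}{dt}\bN(t)\right)$. Applying this to $\bN = \bM_\eps(\cdot,t_0)$ and using the ODE \eqref{eq:ode-2} gives
\[
\phi'(t) = \phi(t) \cdot \Tr\!\left(\bM_\eps(t,t_0)^{-1} \big(-\eps^{-1}\bA(t)\bM_\eps(t,t_0)\big)\right) = -\eps^{-1}\Tr(\bA(t))\,\phi(t),
\]
where in the last equality I used the cyclic property of the trace. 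With the initial condition $\phi(t_0) = \det(\bI_d) = 1$, this scalar linear ODE integrates to $\phi(t) = \exp\!\big(-\eps^{-1}\int_{t_0}^t \Tr(\bA(s))\,ds\big)$, which is \eqref{eq:det-3}. The one mild subtlety is justifying Jacobi's formula at every $t$; this is not a real obstacle because one only needs that $\phi$ does not vanish, and in fact the computation above can be done first on a small interval where $\bM_\eps(t,t_0)$ is close to $\bI_d$ (and hence invertible), proving the exponential formula there; since the right-hand side is everywhere nonzero, the identity then extends to all $t \in \R$ by a continuation argument, giving in particular an alternative derivation of invertibility in (ii).
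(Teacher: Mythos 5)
Your proof is correct. For this lemma the paper itself gives no argument at all --- it simply cites the textbook reference \cite{BN67} --- and what you have written out (uniqueness for the linear matrix initial value problem to get the composition rule, invertibility as its corollary, and Jacobi's/Liouville's formula integrated to \eqref{eq:det-3}) is exactly the standard argument that reference contains, including a legitimate handling of the invertibility subtlety via continuation (alternatively one could invoke Jacobi's formula in its adjugate form, $\frac{d}{dt}\det \bN = \Tr(\mathrm{adj}(\bN)\,\bN')$, which needs no invertibility at all). Your standing assumption that $\bA$ is continuous is harmless in the paper's setting, since there $\bA\in\bH^1(0,1)$ embeds into the continuous functions.
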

\begin{proof}
The proof can be found in \cite[Chapter 6]{BN67}.
\end{proof}

We finish this appendix with two useful inequalities about the determinants of symmetric positive definite matrices.

\begin{lem}
If $\bA, \bB$ are real symmetric positive definite matrices of size $d$, then
\be\label{eq:det-1}
(\det(\bA + \bB))^{\frac{1}{d}} \geq (\det(\bA))^{\frac{1}{d}} +  (\det(\bB))^{\frac{1}{d}}.
\en
\end{lem}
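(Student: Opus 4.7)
The plan is to reduce this matrix inequality to a scalar inequality about eigenvalues via simultaneous diagonalization, and then finish by a single application of AM--GM. Since $\bA$ is symmetric positive definite, the square root $\bA^{1/2}$ is well defined and invertible, and the congruent matrix $\bC := \bA^{-1/2}\bB \bA^{-1/2}$ is again symmetric positive definite. Denoting its (strictly positive) eigenvalues by $\lambda_1,\ldots,\lambda_d$, the basic determinant identities yield
\be
\det(\bA + \bB) = \det(\bA)\,\det(\bI_d + \bC) = \det(\bA) \prod_{i=1}^d (1+\lambda_i), \qquad \det(\bB) = \det(\bA) \prod_{i=1}^d \lambda_i.
\en
Dividing \eqref{eq:det-1} through by the positive quantity $\det(\bA)^{1/d}$, the lemma is thus equivalent to the scalar inequality
\be\label{eq:scalar-minkowski}
\prod_{i=1}^d (1+\lambda_i)^{1/d} \geq 1 + \prod_{i=1}^d \lambda_i^{1/d}.
\en

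To establish \eqref{eq:scalar-minkowski} I would apply the $d$-term AM--GM inequality entrywise to the two positive vectors $\bigl(\tfrac{1}{1+\lambda_i}\bigr)_{i=1}^d$ and $\bigl(\tfrac{\lambda_i}{1+\lambda_i}\bigr)_{i=1}^d$, both of which sum (after weighting by $1/d$) to at most one on account of the telescoping identity $\tfrac{1}{1+\lambda_i} + \tfrac{\lambda_i}{1+\lambda_i} = 1$. Concretely,
\be
\prod_{i=1}^d \Bigl(\frac{1}{1+\lambda_i}\Bigr)^{1/d} + \prod_{i=1}^d \Bigl(\frac{\lambda_i}{1+\lambda_i}\Bigr)^{1/d} \leq \frac{1}{d}\sum_{i=1}^d \frac{1}{1+\lambda_i} + \frac{1}{d}\sum_{i=1}^d \frac{\lambda_i}{1+\lambda_i} = 1,
\en
and multiplying both sides by $\prod_{i=1}^d (1+\lambda_i)^{1/d}$ yields \eqref{eq:scalar-minkowski}.

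The proof therefore breaks into four routine steps: (i) the congruence transformation by $\bA^{-1/2}$, which is legitimate because $\bA$ is strictly positive; (ii) diagonalization of $\bC$ to express both determinants as products over eigenvalues; (iii) factoring out $\det(\bA)^{1/d}$ to reduce to a scalar statement; and (iv) AM--GM. I do not anticipate any serious obstacle: the only minor point worth verifying is the legitimacy of the AM--GM step above, which is an immediate consequence of the classical scalar inequality applied $d$ times to the coordinates of the two vectors.
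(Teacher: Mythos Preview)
Your argument is correct: the congruence reduction by $\bA^{-1/2}$ and the subsequent AM--GM estimate yield the Minkowski determinant inequality exactly as stated. The paper itself does not give a proof but simply cites a reference, so there is nothing to compare against; your self-contained argument is the standard one and would serve perfectly well in place of the citation.
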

A proof of this lemma can be found in \cite[Page 115]{MM92}. It shows that the function $\bA \mapsto \det(\bA)^{\frac{1}{d}}$ is concave. As a consequence, we have the following corollary.
\begin{cor}
Suppose that $\bA\in C([0,1]; \R^{d\times d})$ is a matrix-valued function and that $\bA(t)$ is symmetric positive definite for any $t \in[0, 1]$. Then we have
\be\label{eq:det-2}
\left[\det\left(\int_0^1 \bA(t)dt \right)\right]^{\frac{1}{d}} \geq \int_0^1 \det(\bA(t))^{\frac{1}{d}} dt.
\en
\end{cor}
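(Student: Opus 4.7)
The plan is to establish this integral Minkowski-type inequality by promoting the finite-sum form \eqref{eq:det-1} to an integral statement via Riemann-sum approximation, exactly as one promotes discrete Jensen's inequality for a concave function to its integral form.

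First, I would fix a positive integer $N$ and pick sample points $t_i \in [(i-1)/N, i/N]$ for $i = 1,\ldots,N$. Iterating Minkowski's determinant inequality \eqref{eq:det-1} (valid because each $\bA(t_i)$ is symmetric positive definite, and finite sums of such matrices remain symmetric positive definite) gives
\begin{equation*}
\left[\det\left(\sum_{i=1}^N \bA(t_i)\right)\right]^{1/d} \;\geq\; \sum_{i=1}^N \bigl[\det \bA(t_i)\bigr]^{1/d}.
\end{equation*}
Using the homogeneity $\det(c\bA) = c^d \det(\bA)$ with $c = 1/N$, this is equivalent to
\begin{equation*}
\left[\det\!\left(\frac{1}{N}\sum_{i=1}^N \bA(t_i)\right)\right]^{1/d} \;\geq\; \frac{1}{N}\sum_{i=1}^N \bigl[\det \bA(t_i)\bigr]^{1/d}.
\end{equation*}
Both sides are Riemann sums for the quantities appearing in \eqref{eq:det-2}.

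Next, I would pass to the limit $N \to \infty$. Since $\bA$ is continuous on the compact interval $[0,1]$, it is uniformly continuous, so the left-hand Riemann sum converges entrywise to $\int_0^1 \bA(t)\,dt$. Because $\det$ is a polynomial in the entries and hence continuous, and because $(\cdot)^{1/d}$ is continuous on the positive reals, the left-hand side converges to $[\det(\int_0^1 \bA(t)\,dt)]^{1/d}$; positivity of this limit is guaranteed since $\int_0^1 \bA(t)\,dt$ is symmetric positive definite (being an average of such matrices, with strict positivity inherited from, for instance, the minimum eigenvalue of $\bA$ on $[0,1]$). On the right-hand side, $t \mapsto [\det \bA(t)]^{1/d}$ is continuous on $[0,1]$, so its Riemann sums converge to $\int_0^1 [\det \bA(t)]^{1/d}\,dt$. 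Taking the limit in the inequality above yields \eqref{eq:det-2}.

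There is essentially no serious obstacle here: the content is entirely in Lemma \eqref{eq:det-1}, and the only thing to be careful about in the limit passage is to stay in the open cone of symmetric positive definite matrices, on which both $\det$ and $(\cdot)^{1/d}$ are continuous; continuity and positive definiteness of $\bA(\cdot)$ on the compact set $[0,1]$ make this immediate. An alternative, essentially equivalent route would be to quote Jensen's inequality directly for the concave functional $\bA \mapsto \det(\bA)^{1/d}$ on the convex cone of positive definite matrices, as already advertised in the remark preceding the corollary; the Riemann-sum argument above is simply the hands-on realization of that Jensen step.
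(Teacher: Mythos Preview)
Your proposal is correct and follows essentially the same approach as the paper: both discretize via Riemann sums, apply the finite Minkowski determinant inequality \eqref{eq:det-1} together with the homogeneity $\det(c\bA)=c^d\det(\bA)$, and then pass to the limit using continuity of $\bA$. Your write-up is in fact slightly more careful than the paper's in justifying the limit passage (positive definiteness of the integral, continuity of $\det$ and $(\cdot)^{1/d}$), but the argument is the same.
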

\begin{proof}
Define $t_i = i/N$ with $i = 0, 1, \cdots, N$. Since $\bA$ is continuous on $[0,1]$, it holds that
$$
\bea
 \left[\det\left(\int_0^1 \bA(t)dt \right)\right]^{\frac{1}{d}}  &= \lim_{N\gt \infty} \left[\det\left(\frac{1}{N}\sum_{i=1}^N \bA(t_i) \right)\right]^{\frac{1}{d}}\\
&  = \lim_{N\gt \infty} \frac{1}{N} \left[\det\left(\sum_{i=1}^N \bA(t_i) \right)\right]^{\frac{1}{d}} \\
 & \geq \lim_{N\gt \infty} \frac{1}{N} \sum_{i=1}^N \det(\bA(t_i))^{\frac{1}{d}} = \int_0^1 \det(\bA(t))^{\frac{1}{d}} dt.
\ena
$$
We have used \eqref{eq:det-1} in the inequality.
\end{proof}

\section{Useful Inequalities and Lemmas}
\begin{lem}[Interpolation inequality]\label{lem:int}
Let $u\in H^1_0(0,1)$. Then
\be\label{eq:interp} 
\|u\|_{H^s} \lesssim \|u\|_{H^1}^{s} \|u\|_{L^2}^{1-s}
\en
for any $s\in [0, 1]$. 
\end{lem}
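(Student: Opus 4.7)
The plan is to prove this classical interpolation inequality by expanding $u$ in an appropriate Fourier basis and applying H\"older's inequality term-by-term to the resulting series. Since $u\in H^1_0(0,1)$, the natural basis adapted to the Dirichlet boundary conditions is the sine basis $\{\sqrt{2}\sin(k\pi x)\}_{k\geq 1}$ (eigenfunctions of the Dirichlet Laplacian on $(0,1)$), so I would write
\[
u(x) = \sum_{k\geq 1} a_k \sin(k\pi x).
\]
The first step is to record the standard Fourier characterization of the Sobolev norms on $H^\sigma_0(0,1)$:
\[
\|u\|_{L^2}^2 \simeq \sum_{k\geq 1} |a_k|^2, \qquad \|u\|_{H^1}^2 \simeq \sum_{k\geq 1} k^2 |a_k|^2, \qquad \|u\|_{H^s}^2 \simeq \sum_{k\geq 1} (1+k^2)^s |a_k|^2,
\]
valid for $\sigma\in[0,1]$ and, in particular, applicable to our $u$ since $H^1_0 \subset H^s_0$ for all $s\in[0,1]$.

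Next I would apply H\"older's inequality to the weighted series with conjugate exponents $p = 1/s$ and $q = 1/(1-s)$, writing each summand as a product
\[
(1+k^2)^s |a_k|^2 = \bigl((1+k^2)|a_k|^2\bigr)^{s} \bigl(|a_k|^2\bigr)^{1-s}.
\]
H\"older then yields
\[
\sum_{k\geq 1} (1+k^2)^s |a_k|^2 \leq \Bigl(\sum_{k\geq 1} (1+k^2)|a_k|^2\Bigr)^{s} \Bigl(\sum_{k\geq 1} |a_k|^2\Bigr)^{1-s},
\]
and combining this with the three Fourier characterizations gives $\|u\|_{H^s}^2 \lesssim \|u\|_{H^1}^{2s}\|u\|_{L^2}^{2(1-s)}$. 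Taking square roots produces \eqref{eq:interp}. The endpoint cases $s=0$ and $s=1$ are immediate.

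I do not anticipate a substantial obstacle. The only minor point to watch is that the sine-series characterization of the $H^s$ norm for $s\in(1/2,1]$ coincides with the norm on $H^s_0$ only when the function has vanishing trace at the endpoints, which is exactly guaranteed by the hypothesis $u\in H^1_0(0,1)$. This is why the lemma is stated for $H^1_0$ rather than for general $H^1$ functions. An alternative, basis-free, proof would use the real interpolation method ($K$-functional), identifying $H^s_0 = [L^2, H^1_0]_{s,2}$ and appealing to the standard interpolation inequality for real interpolation spaces; the Fourier approach above is however the most direct in one dimension.
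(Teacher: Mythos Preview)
Your proof is correct and complete. The Fourier expansion in the Dirichlet sine basis together with H\"older's inequality (with exponents $1/s$ and $1/(1-s)$) is the standard elementary route to this inequality, and you have correctly flagged the one delicate point, namely that the spectral characterisation of $\|\cdot\|_{H^s}$ via the sine coefficients is valid on $H^1_0(0,1)$ precisely because such functions have vanishing trace.

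The paper, by contrast, does not give a proof at all: it simply cites Corollary~6.11 of \cite{M09}. That reference establishes the interpolation inequality in a more general setting via the abstract real-interpolation machinery (the $K$-functional identification $[L^2,H^1_0]_{s,2}=H^s_0$), which is exactly the ``basis-free'' alternative you mention at the end of your proposal. Your direct spectral argument is more self-contained and perfectly adequate for the one-dimensional Dirichlet setting used in the paper; the cited approach has the advantage of working verbatim in higher dimensions and for more general operators, but that generality is not needed here.
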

\begin{proof}
See \cite[Corollary 6.11]{M09} for the proof.
\end{proof}

\begin{lem}\label{lem:ineq-1}
Let $u\in H_0^1(0,1)$. Then $\|u\|_{L^\infty}^2 \leq 2\|u\|_{L^2} |u|_{H^1} $. 
\begin{proof}
It suffices to prove the inequality when $u\in C_0^\infty(0,1)$. For any $t\in (0,1)$, it follows from the fundamental theorem of calculus and Cauchy-Schwarz inequality that
\be
u^2(t) = 2\int_0^t u(s) u^\prime(s) ds \leq 2\|u\|_{L^2}|u|_{H^1}.
\en
The lemma then follows by taking the supremum over $t$.
\end{proof}
\end{lem}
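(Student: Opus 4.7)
The plan is to establish this as an elementary one-dimensional Sobolev/Gagliardo--Nirenberg type estimate, exploiting the fact that functions in $H^1_0(0,1)$ vanish at both endpoints. First I would reduce to the smooth case: since $C_0^\infty(0,1)$ is dense in $H^1_0(0,1)$, and since the Sobolev embedding $H^1(0,1) \hookrightarrow C([0,1])$ ensures that $H^1$-convergence implies uniform convergence, both sides of the claimed inequality are continuous under $H^1$-convergence, so it suffices to prove the bound for $u\in C_0^\infty(0,1)$.

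For such $u$, the key identity is obtained from the fundamental theorem of calculus applied to $u^2$: since $u(0)=0$, one has
\be
u^2(t) \;=\; \int_0^t \frac{d}{ds}(u^2(s))\,ds \;=\; 2\int_0^t u(s)\,u'(s)\,ds
\en
for every $t \in (0,1)$. A single application of the Cauchy--Schwarz inequality to the right-hand side then yields
\be
|u^2(t)| \;\le\; 2 \Big(\int_0^t u^2(s)\,ds\Big)^{1/2}\Big(\int_0^t (u'(s))^2\,ds\Big)^{1/2} \;\le\; 2\,\|u\|_{L^2(0,1)}\,|u|_{H^1(0,1)}.
\en
Taking the supremum over $t\in(0,1)$ gives $\|u\|_{L^\infty}^2 \le 2\|u\|_{L^2}|u|_{H^1}$, which is the claim.

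There is no real obstacle here; the statement is classical and the constant $2$ is forced by the single integration. If one wished to sharpen it, one could split $u^2(t) = \int_0^t (u^2)'\,ds = -\int_t^1 (u^2)'\,ds$ and average the two representations, using $u(1)=0$, to reduce the constant, but this is unnecessary for the present statement. The approximation step at the beginning is the only point requiring care, and it is immediate from Morrey's embedding in dimension one.
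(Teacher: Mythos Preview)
Your proof is correct and follows essentially the same approach as the paper: reduce to $C_0^\infty(0,1)$ by density, apply the fundamental theorem of calculus to $u^2$ using $u(0)=0$, bound by Cauchy--Schwarz, and take the supremum. Your version is slightly more explicit about the density step and the Cauchy--Schwarz application, but the argument is identical in substance.
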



\begin{lem}\label{lem:app1}
Let $\alpha, \beta \in (0,1)$ be such that $\beta > \max(\alpha, \alpha/2 + 1/4)$. Then any matrix-valued function $B\in \bH^{-\alpha}(0,1)$ can be viewed as a bounded multiplication operator from $\bH_0^{\beta}$ to $\bH^{-\beta}$. Furthermore we have
\be\label{eq:multi1}
\|B\|_{\mathcal{L}(\bH_0^{\beta}, \bH^{-\beta})} \lesssim \|B\|_{\bH^{-\alpha}(0,1)}. 
\en
\end{lem}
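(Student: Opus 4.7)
My plan is to reduce the lemma, via duality, to a classical Sobolev multiplication estimate for scalar functions on $(0,1)$, and then prove that scalar estimate by a Fourier-series argument. Since $\bH^{-\beta}$ is identified with the dual of $\bH_0^\beta$, one has
\[
\|B\|_{\mathcal{L}(\bH_0^\beta, \bH^{-\beta})} = \sup_{\|u\|_{\bH^\beta}, \|v\|_{\bH^\beta} \leq 1} \bigl| \langle B u, v \rangle \bigr|.
\]
Expanding the pairing entrywise and interpreting the matrix action distributionally gives
\[
\langle B u, v \rangle \;=\; \sum_{i,j} \langle B_{ij}, u_j v_i \rangle_{H^{-\alpha}, H_0^{\alpha}},
\]
so after summing over the $d^2$ entries of $B$, the claim reduces to the scalar bilinear estimate
\[
\|fg\|_{H^{\alpha}(0,1)} \;\lesssim\; \|f\|_{H^{\beta}(0,1)} \, \|g\|_{H^{\beta}(0,1)} \qquad \forall\, f, g \in H_0^{\beta}(0,1),
\]
under the hypothesis $\beta > \max(\alpha, \alpha/2 + 1/4)$.

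To prove this scalar estimate, I would expand $f$ and $g$ in the orthonormal sine basis $\{\sin(\pi k\, \cdot\,)\}_{k \geq 1}$ of $H_0^\beta(0,1)$, so that the $H^\beta$ norm is equivalent to $\bigl(\sum_k k^{2\beta}|\hat f_k|^2\bigr)^{1/2}$. After the usual sine-product-to-cosine identity, $fg$ has a convolution-type Fourier representation with coefficients $c_n = \sum_k \hat f_k \,\hat g_{n-k}$, and its $H^\alpha$ norm is controlled by $\sum_n (1+|n|)^{2\alpha}|c_n|^2$. A weighted Cauchy--Schwarz inequality, combined with the standard convolution-kernel bound
\[
\sum_k (1+|k|)^{-2\beta}(1+|n-k|)^{-2\beta} \;\lesssim\; (1+|n|)^{-(4\beta - 1)},
\]
which is valid because $2\beta > 1/2$, then gives the desired estimate after summation: the outer exponent $2\alpha - (4\beta-1)$ is negative precisely because $\beta > \alpha/2 + 1/4$, which is where this hypothesis enters, while the hypothesis $\beta \geq \alpha$ is used to handle the high-low frequency regimes in which one of $k$ or $n-k$ is small. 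A cleaner implementation proceeds through a paraproduct decomposition into the three regions $|k| \ll |n-k|$, $|k|\sim|n-k|$, and $|n-k| \ll |k|$, treating each piece separately by Plancherel and Young's convolution inequality.

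The main technical obstacle is the paraproduct book-keeping for the convolution in the sine basis on the bounded interval, and a secondary subtlety is verifying that the scalar products $u_j v_i$ actually lie in $H_0^\alpha$ (not merely $H^\alpha$), so that the distributional pairing with $B_{ij} \in H^{-\alpha}$ is well defined. The latter is handled by approximating $u_j, v_i \in H_0^\beta$ by $C_c^\infty(0,1)$ functions, whose products are again compactly supported in $(0,1)$ and hence in $H_0^\alpha$, and then passing to the limit via the continuity of multiplication guaranteed by the bilinear estimate just proved. Combining these steps and summing over the $d^2$ entries of $B$ yields
\[
\|B\|_{\mathcal{L}(\bH_0^{\beta}, \bH^{-\beta})} \;\lesssim\; \|B\|_{\bH^{-\alpha}(0,1)},
\]
with an implicit constant depending only on $\alpha$, $\beta$, and $d$.
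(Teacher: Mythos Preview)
Your approach is essentially identical to the paper's: both reduce to the scalar case, define the multiplication by duality via $\langle B\varphi,\psi\rangle = \langle B,\varphi\psi\rangle$, and then invoke the Sobolev product estimate $\|\varphi\psi\|_{H^\alpha} \lesssim \|\varphi\|_{H^\beta}\|\psi\|_{H^\beta}$ under the hypothesis $\beta > \max(\alpha,\alpha/2+1/4)$. The only real difference is that the paper simply cites this product estimate as a black box from the literature, whereas you sketch its proof via a Fourier/paraproduct argument in the sine basis; your explicit attention to the $H_0^\alpha$ versus $H^\alpha$ issue is also slightly more careful than the paper, which glosses over this point.
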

\begin{proof} It suffices to consider the proof in the scalar case.
Let $B\in H^{-\alpha}$ and $\varphi\in H_0^\beta$. Assume that $\beta > \alpha$, then one can define the multiplication $B \varphi$ as a distribution in the sense that for any $\psi \in C_0^\infty(0,1)$
$$
\langle B\varphi, \psi\rangle = \langle B, \varphi \psi\rangle
$$
Moreover, if $\beta - \alpha/2 > 1/4$, we have
$$ 
|\langle B\varphi, \psi\rangle| = |\langle B, \varphi \psi\rangle| \leq \|B\|_{H^{-\alpha}} \|\varphi \psi\|_{H^\alpha} \lesssim \|B\|_{H^{-\alpha}} \|\varphi \|_{H^\beta} \|\psi \|_{H^\beta}
$$
where the last estimate follows from the following Lemma. Therefore the desired estimate \eqref{eq:multi1} holds. 
\end{proof}

\begin{lem}
Let $\alpha, \beta$ and $\gamma$ be positive exponents such that $\min(\alpha, \beta) > \gamma$ and $\alpha + \beta > \gamma + 1/2$. Then, if $\varphi \in H^\alpha$ and $\psi\in H^\beta$, the product $\varphi\psi$ belongs to $H^\gamma$ and $\|\varphi\psi\|_{H^\gamma} \lesssim \|\varphi\|_{H^\alpha} \|\psi\|_{H^\beta}$. 
\end{lem}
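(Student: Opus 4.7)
The plan is to reduce the problem to $\R$ via bounded extension operators $E\colon H^s(0,1)\to H^s(\R)$ (available for all $s\ge 0$ by Stein's extension theorem). Since $\varphi\psi = (E\varphi)(E\psi)|_{(0,1)}$ pointwise, and restriction is continuous $H^\gamma(\R)\to H^\gamma(0,1)$, it suffices to establish the estimate $\|fg\|_{H^\gamma(\R)}\lesssim \|f\|_{H^\alpha(\R)}\|g\|_{H^\beta(\R)}$ on the whole line.

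On $\R$ I will invoke Bony's paraproduct decomposition
\begin{equation*}
fg \;=\; T_f g + T_g f + R(f,g),
\end{equation*}
where, with a standard Littlewood--Paley partition $\mathrm{Id}=S_0+\sum_{j\ge 0}\Delta_j$, one sets $T_f g := \sum_j S_{j-2}f\cdot\Delta_j g$ and $R(f,g) := \sum_{|j-k|\le 1}\Delta_j f\cdot\Delta_k g$. The blocks of $T_fg$ are spectrally localized in annuli $\{|\xi|\sim 2^j\}$, while those of $R(f,g)$ are supported in balls $\{|\xi|\lesssim 2^j\}$, which dictates the two different summation mechanisms used below.

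For the low--high paraproduct $T_fg$: combining Bernstein's inequality with the embedding $H^\alpha(\R)\hookrightarrow L^\infty(\R)$ when $\alpha>1/2$, or the controlled loss $\|S_{j-2}f\|_{L^\infty}\lesssim 2^{j(1/2-\alpha)}\|f\|_{H^\alpha}$ when $\alpha\le 1/2$, together with the Littlewood--Paley characterization $\|u\|_{H^\gamma}^2\sim\sum_j 2^{2j\gamma}\|\Delta_j u\|_{L^2}^2$, one obtains $\|T_fg\|_{H^\gamma}\lesssim \|f\|_{H^\alpha}\|g\|_{H^\beta}$ under the condition $\gamma<\beta$ (plus $\alpha+\beta>\gamma+1/2$ when $\alpha\le 1/2$). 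The symmetric estimate for $T_gf$ needs $\gamma<\alpha$ (plus $\alpha+\beta>\gamma+1/2$ when $\beta\le 1/2$). For the resonant term $R(f,g)$, each summand $\Delta_j f\cdot\Delta_k g$ with $|j-k|\le 1$ satisfies $\|\Delta_j f\cdot\Delta_k g\|_{L^2}\lesssim 2^{j/2}\|\Delta_j f\|_{L^2}\|\Delta_k g\|_{L^2}$ by Bernstein in one dimension. Distributing this over the dyadic shells and applying Cauchy--Schwarz in $j$ with weights $2^{j\gamma}$ produces a geometric series whose convergence rests exactly on the condition $\alpha+\beta>\gamma+1/2$. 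The two hypotheses $\min(\alpha,\beta)>\gamma$ and $\alpha+\beta>\gamma+1/2$ thus assemble precisely the constraints coming from the three pieces of the decomposition.

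The main technical point will be bookkeeping in the borderline regime $\alpha=1/2$ or $\beta=1/2$, where $H^s\hookrightarrow L^\infty$ fails; there one relies on the strict inequalities in the hypotheses to absorb logarithmic losses in Bernstein (or, more cleanly, replaces $L^\infty$ by the Besov space $B^{1/2}_{2,1}$ which still embeds into $L^\infty$). A minor obstacle is the resonant term $R(f,g)$: because its blocks are ball-localized rather than annular, $\Delta_N R(f,g)$ picks up contributions from all $j\ge N-C$, so the final $\ell^2$-in-$N$ summation is carried out after swapping the order of summation and using the strict gap $\alpha+\beta-\gamma-1/2>0$ to dominate by a convergent geometric tail.
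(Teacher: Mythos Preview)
Your paraproduct argument is correct and self-contained; it is the standard modern route to such product estimates. The paper, by contrast, gives no proof at all: it simply cites the result from a textbook (Theorem~6.18 of the reference denoted M09). So there is nothing substantive to compare against on the paper's side.

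One small refinement for the resonant term: your ``$\ell^2$-in-$N$ after swapping summation'' sketch is workable but slightly awkward to execute as stated. The cleanest bookkeeping is to bound $\|R(f,g)\|_{B^\gamma_{2,1}}$ rather than $\|R(f,g)\|_{H^\gamma}$ directly. Writing $c_j=2^{j\alpha}\|\Delta_j f\|_{L^2}$, $d_j=2^{j\beta}\|\tilde\Delta_j g\|_{L^2}$, and $\delta=\alpha+\beta-\gamma-\tfrac12>0$, one has
\[
\sum_{N} 2^{N\gamma}\|\Delta_N R(f,g)\|_{L^2}
\;\lesssim\; \sum_N \sum_{j\ge N-C} 2^{(N-j)\gamma}\,2^{-j\delta}c_j d_j
\;=\; \sum_j 2^{-j\delta}c_j d_j \sum_{N\le j+C} 2^{(N-j)\gamma}.
\]
The inner sum is $O(1)$ precisely because $\gamma>0$, and then Cauchy--Schwarz on $\sum_j c_j d_j$ finishes. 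Since $B^\gamma_{2,1}\hookrightarrow H^\gamma$, the desired bound follows. This makes transparent where positivity of $\gamma$ is used and avoids the $\ell^2$--$\ell^\infty$ mismatch that would otherwise arise from trying to control $\sum_N(\sum_{j\ge N}\cdots)^2$ directly.
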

\begin{proof} The proof can be found in \cite[Theorem 6.18]{M09}.
\end{proof}

\begin{lem}\label{lem:schodinger}
Let $A\in H^1(0,1)$ and let $f\in L^2(0,1)$. Set $B = A^2 - A^\prime$. Then there exits a unique solution $u\in H_0^1(0,1)$ solving the problem 
\be\bea\label{eq:probschodinger}
& (-\partial_t^2  + B) u = f \text{ on } (0,1),\\
& u(0) = u(1) = 0.
\ena\en
Moreover, it holds that 
\be\label{eq:schodingerest}
\|u\|_{H^{2}(0,1)} \leq C \|f\|_{L^2(0,1)},
\en
and $(-\partial_t^2  + B)^{-1}$ is a trace-class operator on $L^2(0,1)$.
\end{lem}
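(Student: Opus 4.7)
The plan is to exploit the factorization $-\partial_t^2 + B = L^{\ast}L$, where $L = \partial_t + A$ and $L^\ast = -\partial_t + A$ is its formal adjoint. This works precisely because $B = A^2 - A'$: a direct computation gives $L^\ast L u = -(u' + Au)' + A(u' + Au) = -u'' + (A^2 - A')u$. Accordingly I would work with the symmetric bilinear form
\begin{equation*}
a(u,v) := \int_0^1 (u' + Au)(v' + Av)\, dt
\end{equation*}
on $H_0^1(0,1)$. Integration by parts, combined with the Dirichlet boundary conditions (which kill the boundary contribution $[A u v]_0^1$), reconciles $a$ with $\int_0^1 u'v' + Buv\, dt$, so weak solutions of \eqref{eq:probschodinger} are precisely critical points of $v \mapsto a(u,v) - \langle f, v\rangle$. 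Continuity $|a(u,v)| \leq C\|u\|_{H^1}\|v\|_{H^1}$ follows from the embedding $H^1(0,1) \hookrightarrow L^\infty(0,1)$ applied to $A$.

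The crucial step is coercivity. I would introduce the gauge $\phi(t) := \int_0^t A(s)\, ds$, which lies in $H^2(0,1) \hookrightarrow C^1([0,1])$ since $A \in H^1$, and perform the substitution $u = e^{-\phi}w$. The map $w \mapsto u$ is an isomorphism of $H_0^1(0,1)$ onto itself with uniformly equivalent norms (the constants depending only on $\|\phi\|_{L^\infty}$), and a short computation gives $L u = e^{-\phi} w'$. Consequently
\begin{equation*}
a(u,u) = \int_0^1 e^{-2\phi}(w')^2\, dt \;\geq\; e^{-2\|\phi\|_{L^\infty}}\|w'\|_{L^2}^2 \;\gtrsim\; \|u\|_{H^1}^2,
\end{equation*}
where the last inequality uses Poincar\'e on $H_0^1$ and the norm equivalence. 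Lax--Milgram then yields a unique weak solution $u \in H_0^1(0,1)$ with $\|u\|_{H^1} \leq C\|f\|_{L^2}$. For $H^2$ regularity I would bootstrap: since $A \in H^1 \hookrightarrow L^\infty$ gives $A^2 \in L^\infty$ and $A' \in L^2$, we have $B \in L^2$; together with $u \in L^\infty$ this yields $Bu \in L^2$. Rearranging \eqref{eq:probschodinger} gives $u'' = Bu - f \in L^2$, whence $u \in H^2$ and $\|u''\|_{L^2} \leq \|B\|_{L^2}\|u\|_{L^\infty} + \|f\|_{L^2} \leq C\|f\|_{L^2}$, completing \eqref{eq:schodingerest}.

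For the trace-class assertion I would factor $T := (-\partial_t^2 + B)^{-1}$ as the composition $L^2(0,1) \xrightarrow{T} H^2(0,1)\cap H_0^1(0,1) \hookrightarrow L^2(0,1)$; the first arrow is bounded by the $H^2$ estimate just established, while the inclusion coincides (up to composition with the isomorphism $-\partial_t^2 : H^2\cap H_0^1 \to L^2$) with the inverse Dirichlet Laplacian on $(0,1)$, whose eigenvalues $(\pi n)^2$ produce summable singular values $\sim n^{-2}$; hence the inclusion is trace class, and composition of a bounded operator with a trace-class one remains trace class. The main obstacle in this plan is the coercivity step: $B$ is sign-indefinite and merely in $L^2$, so neither a direct bound nor a standard G{\aa}rding inequality delivers coercivity on $H_0^1$ without invoking the Fredholm alternative (which in turn requires a separate uniqueness argument, circular without the very estimate we seek). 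The factorization $B = A^2 - A'$ together with the gauge substitution $u = e^{-\phi}w$ is what recasts $a$ as a genuine weighted Dirichlet energy and bypasses this difficulty.
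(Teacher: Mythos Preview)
Your argument is correct and takes a genuinely different route from the paper. Both proofs hinge on the same algebraic observation---that $-\partial_t^2 + B$ factors as $L^\ast L$ with $L = \partial_t + A$, equivalently that $a(u,u) = \int_0^1 (u' + Au)^2\,dt$---but they deploy it at different points. The paper follows the Fredholm template: it rewrites the boundary value problem as a Lippmann--Schwinger integral equation $u + (-\partial_t^2)^{-1}(Bu) = (-\partial_t^2)^{-1}f$, checks that $\mathcal{T} = (-\partial_t^2)^{-1}B$ is compact on $L^2(0,1)$, and invokes the factorization only to prove uniqueness (the homogeneous equation forces $u' = -Au$ with zero endpoints, hence $u=0$). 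Your approach instead uses the factorization from the outset to produce a coercive bilinear form, the gauge substitution $u = e^{-\phi}w$ converting $a(u,u)$ into a weighted Dirichlet energy $\int e^{-2\phi}(w')^2\,dt$ so that Lax--Milgram applies directly. Your route is more self-contained (no Fredholm theory) and gives explicit dependence of the constants on $\|\phi\|_{L^\infty}$; the paper's route is the standard template that would still work when no factorization is available, at the cost of requiring a separate uniqueness argument. The bootstrap to $H^2$ and the trace-class conclusion are essentially the same in both---the paper writes $(-\partial_t^2 + B)^{-1} = (I+\mathcal{T})^{-1}(-\partial_t^2)^{-1}$, which is your factorization through $H^2\cap H_0^1$ viewed from the other side.
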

\begin{proof}
Let $G_0(s, t)$ be the Dirichlet Green's function of $-\partial_t^2$ on $(0,1)$. In fact, $G_0(s, t) = s(1 - t) \wedge t(1 - s)$ for all $s, t\in [0,1]$. From Green's first identity, it is easy to observe that
 a solution $u\in H_0^1(0,1)$ solving \eqref{eq:probschodinger} is a solution $u\in L^2(0,1)$ that solves the Lippmann-Schwinger integral equation
 \be\label{eq:LSieq}
u(t) + \int_0^1 G_0(t, s) B(s) u(s) ds = \int_0^1 G_0(t, s) f(s)ds
 \en
 and vice versa. Now we apply the Fredholm alternative theorem to prove the existence and uniqueness of solution to \eqref{eq:LSieq}. First the operator 
 $$
 (\mathcal{T} u) (t) := (-\partial_t^2)^{-1}(B u)(t)= \int_0^1 G_0(t, s) B(s) u(s)ds
 $$
 is compact from $L^2(0,1)$ to itself. There are several ways to prove this, but the simplest argument is perhaps the observation that $ \mathcal{T}$ is bounded from $L^2(0,1)$ to $W^{1,\infty}(0,1)$. Indeed, since $G_0(t, \cdot) \in W^{1,\infty}(0,1)$ for any $t\in [0,1]$, we can apply Cauchy-Schwarz inequality twice to get
 \be\bea\label{eq:boundTu}
\|\mathcal{T} u\|_{W^{1,\infty}(0,1)} & \leq \sup_{t\in [0,1]}\|G_0(t,\cdot)\|_{W^{1,\infty}(0,1)} \|B u\|_{L^1(0,1)} \\
& \leq \sup_{t\in [0,1]}\|G_0(t,\cdot)\|_{W^{1,\infty}(0,1)} \|B\|_{L^2(0,1)} \|u\|_{L^2(0,1)}.
 \ena\en
 Then the compactness of $\mathcal{T}$ follows from the compact embedding $W^{1,\infty}(0,1) \hookrightarrow L^2(0,1)$. We are left to show the uniqueness of \eqref{eq:LSieq} or equivalently \eqref{eq:probschodinger}.  To see this, setting $f = 0$, we multiply the equation \eqref{eq:probschodinger} by $u$, integrate, use Green's first identity and get 
$$
 \bea
 0 & = \int_0^1 u^\prime(t)^2 dt + \int_0^1 (A^2(t) - A^\prime(t)) u^2(t) dt \\
 & = \int_0^1 u^\prime(t)^2 dt + \int_0^1 A^2(t) u^2(t) dt 
 + 2 \int_0^1 A(t)u(t)u^\prime(t) dt\\
 & = \int_0^1 (u^\prime(t) + A(t)u(t))^2 dt.
 \ena
$$
 Therefore we should have $u^\prime(t) = -A(t) u(t)$. The only solution to this equation with the Dirichlet boundary conditions is zero. Hence by the Fredholm alternative theorem, the integral equation \eqref{eq:LSieq} has a unique solution in $L^2(0,1)$. Then the estimate \eqref{eq:schodingerest} follows from \eqref{eq:boundTu}, \eqref{eq:LSieq} and estimate that $\|\mathcal{R}f\|_{H^{2}(0,1)} \leq C\|f\|_{L^2(0,1)}$ where
 $$
 (\mathcal{R}f)(t) = \int_0^1 G_0(t,s) f(s) ds.
$$
Finally observe that 
$$
(-\partial_t^2 + B)^{-1} = \big(I + (-\partial_t^2)^{-1} B\big)^{-1} (-\partial_t^2)^{-1} = \big(I + \mathcal{T} \big)^{-1} (-\partial_t^2)^{-1}.
$$
Then it follows from the fact that $(-\partial_t^2)^{-1}$ is a trace-class operator on $L^2(0,1)$ and the boundedness of $(I + \mathcal{T})^{-1}$ that $(-\partial_t^2 + B)^{-1}$ is trace-class.

\end{proof}

\begin{rem}\label{rem:schodinger}
Lemma \ref{lem:schodinger} can be easily extended to the matrix-valued case. More precisely, assume that $\bA\in \bH^1(0,1)$ and $\bA(t)$ is a symmetric matrix for any $t\in [0,1]$. Let $\bB = \bA^2 - \bA^\prime$. Then the inverse of the matrix-valued Schr\"odinger operator $(-\bD + \bB)^{-1}$ is bounded from $\bL^2(0,1)$ to $\bH^{2}(0,1)$ and is a trace-class operator on $\bL^2(0,1)$. $\qed$
\end{rem}

The next lemma discusses some properties of approximation by convolution. 
\begin{lem}\label{lem:convapp-1}
Let $K\in C_0^\infty(\R)$ such that $K \geq 0$ and $\int_\R K = 1$. Denote by $K_\eps(\cdot) = \eps^{-1} K(x/\eps)$. Suppose that $f\in L^1(\R)$ and define $\mathcal{K}_\eps f = K_\eps \ast f$. Then $\mathcal{K}_\eps f \in L^1(\R)\cap C^\infty(\R)$. Moreover, we have 
\be\label{eq:appidentity}
\mathcal{K}_\eps f \gt f \quad \text{ in }  L^1(\R)
\en
and 
\be\label{eq:apph1norm}
\|\mathcal{K}_\eps f\|_{H^1(\R)} \leq C\eps^{-\frac{3}{2}}\|f\|_{L^1(\R)}.
\en
\begin{proof}
The property \eqref{eq:appidentity}, often termed as the approximation of identity in $\L^1(\R)$, has been proved in many books, e.g. \cite{BN71}. We now show that $\mathcal{K}_\eps f\in H^1(\R)$ and that \eqref{eq:apph1norm} is valid. This can be seen from the observation that
$$
\bea
\|\mathcal{K}_\eps f\|_{H^1(\R)}^2 & = \|\mathcal{K}_\eps f \|_{L^2(\R)}^2 + \|\mathcal{K}_\eps^\prime f \|_{L^2(\R)}^2  \\
& = \|\eps^{-1}K(\cdot/\eps) \ast f \|^2_{L^2(\R)}  + 
\eps^{-2} \|\eps^{-1}K^\prime(\cdot/\eps) \ast f \|^2_{L^2(\R)} \\
& \leq \eps^{-1}\|K\|_{L^2(\R)}^2\|f\|_{L^1(\R)}^2 + \eps^{-3}\|K^\prime\|_{L^2(\R)}^2\|f\|_{L^1(\R)}^2
\leq C\eps^{-3} \|f\|_{L^1(\R)}^2.
\ena
$$
Note that we have used Young's inequalities in the penultimate inequality. 
\end{proof}
\end{lem}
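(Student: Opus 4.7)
The plan is to verify the three claims in order: regularity, approximate identity, and the $H^1$ bound. For the regularity claim $\K_\eps f \in L^1(\R) \cap C^\infty(\R)$, I would first observe that since $K \in C_0^\infty(\R)$ has compact support, $K_\eps$ inherits these properties, and differentiation under the integral sign (legal by dominated convergence, using the compact support of $K_\eps$) gives $\K_\eps f \in C^\infty(\R)$. The $L^1$ membership is immediate from Young's convolution inequality: $\|K_\eps \ast f\|_{L^1(\R)} \le \|K_\eps\|_{L^1(\R)} \|f\|_{L^1(\R)} = \|K\|_{L^1(\R)} \|f\|_{L^1(\R)}$, since the rescaling $K_\eps(\cdot) = \eps^{-1} K(\cdot/\eps)$ preserves the $L^1$ norm.

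For the $L^1$-convergence $\K_\eps f \to f$, I would invoke the standard approximation-of-identity argument. Given $\delta > 0$, approximate $f$ by some $g \in C_c(\R)$ with $\|f - g\|_{L^1(\R)} < \delta$ (using density of $C_c$ in $L^1$). Then
\begin{equation*}
\|\K_\eps f - f\|_{L^1(\R)} \le \|\K_\eps(f-g)\|_{L^1(\R)} + \|\K_\eps g - g\|_{L^1(\R)} + \|g - f\|_{L^1(\R)}.
\end{equation*}
The first and third terms are bounded by $\delta$ (using Young's inequality and $\|K_\eps\|_{L^1} = 1$). For the middle term, the uniform continuity of $g$ together with the compact support of $K$ yields uniform convergence $\K_\eps g \to g$ on a fixed compact set containing all relevant supports, so the middle term vanishes as $\eps \to 0$. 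This standard argument is essentially textbook material and can be cited.

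The main substance is the $H^1$ bound, which I would prove by direct computation using Young's inequality with Lebesgue exponents $(2, 1, 2)$, i.e. $\|g \ast f\|_{L^2} \le \|g\|_{L^2} \|f\|_{L^1}$. First, compute the scaling of the $L^2$ norms:
\begin{equation*}
\|K_\eps\|_{L^2(\R)}^2 = \int_\R \eps^{-2} K(x/\eps)^2 \, dx = \eps^{-1} \|K\|_{L^2(\R)}^2,
\end{equation*}
and, since $(K_\eps)'(x) = \eps^{-2} K'(x/\eps)$,
\begin{equation*}
\|(K_\eps)'\|_{L^2(\R)}^2 = \eps^{-3} \|K'\|_{L^2(\R)}^2.
\end{equation*}
Because differentiation commutes with convolution, $(\K_\eps f)' = (K_\eps)' \ast f$, and applying Young's inequality to both $\K_\eps f$ and its derivative gives
\begin{equation*}
\|\K_\eps f\|_{H^1(\R)}^2 \le \bigl(\eps^{-1} \|K\|_{L^2}^2 + \eps^{-3} \|K'\|_{L^2}^2 \bigr) \|f\|_{L^1(\R)}^2 \le C \eps^{-3} \|f\|_{L^1(\R)}^2,
\end{equation*}
which is the desired estimate after taking square roots.

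There is no real obstacle here; the lemma is elementary and the only point that requires care is tracking the exact power of $\eps$ arising from the rescaling of $K$ and $K'$ in the $L^2$ norm, since the dominant contribution $\eps^{-3}$ comes from the derivative term and not from $K_\eps$ itself. The bound is sharp up to constants, which matters for its later use (with $\eps$ replaced by $\eps^\alpha$) in the construction of the recovery sequence for the $\Gamma$-$\limsup$ inequality in Theorem~\ref{thm:main}.
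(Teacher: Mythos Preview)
Your proposal is correct and follows essentially the same approach as the paper: cite the standard approximation-of-identity result for the $L^1$ convergence, and obtain the $H^1$ bound by applying Young's convolution inequality $\|g\ast f\|_{L^2}\le \|g\|_{L^2}\|f\|_{L^1}$ to both $K_\eps\ast f$ and $(K_\eps)'\ast f$, tracking the $\eps$-scaling of $\|K_\eps\|_{L^2}$ and $\|(K_\eps)'\|_{L^2}$. You supply a bit more detail on the regularity and the approximate-identity argument than the paper does, but the substantive computation is identical.
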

 We continue to adapt Lemma \eqref{lem:convapp-1} to matrix functions defined on a bounded domain. For this purpose, we define two useful operators. Given a function $f\in L^1(0,1)$, we define its extension 
 $$
\Ec f (x) := \begin{cases}
 f(x) & \text{ if } x\in (0,1)\\
 0 & \text{ otherwise }.
 \end{cases}
 $$
Conversely, for a function $g\in L^1(\R)$, we define the restriction
$\Rc f := f|_{(0,1)}$. Likewise, we can define the convolution, extension or restriction of a matrix function through entry-wise operations. The following lemma concerns the convolution approximation of matrix-valued functions.
\begin{lem}\label{lem:convapp-2}
Let $\bA\in \bL^1_a(0,1)$. Define
\be\label{eq:tildeK}
\widetilde{\Kc}_\eps \bA := \Rc \left(\mathcal{K}_\eps \left(\Ec (\bA - a\cdot\bI_d)\right) + a\cdot \bI_d\right).
\en
Then $\widetilde{\Kc}_\eps \bA \in \bH^1_a(0,1)$. Moreover, $\widetilde{\Kc}_\eps \bA \gt \bA$ in $\bL^1(0,1)$ and $\|\widetilde{\Kc}_\eps \bA\|_{\bH^1(0,1)} \leq C \eps^{-\frac{3}{2}}$ with the constant $C$ depending on $A$ and $a$. 
\end{lem}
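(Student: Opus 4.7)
The plan is to establish the three claims—positivity plus $\bH^1$-membership, strong $\bL^1$-convergence, and the $\eps^{-3/2}$ bound—by reducing each to the corresponding entrywise statement provided by Lemma \ref{lem:convapp-1}. The useful formula is that, for $t\in(0,1)$,
$$\widetilde{\Kc}_\eps \bA(t) = \int_0^1 K_\eps(t-s)\bigl(\bA(s)-a\cdot\bI_d\bigr)\,ds + a\cdot\bI_d,$$
which exposes the positivity and symmetry structure of the non-negative piece $\bA-a\cdot\bI_d$ while isolating the constant offset.

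First, I would verify that $\widetilde{\Kc}_\eps \bA\in \bH^1_a(0,1)$. Since $K_\eps\geq 0$ and $\bA(s)-a\cdot\bI_d\geq 0$ a.e., the integrand above is a non-negative combination of symmetric positive semi-definite matrices, hence symmetric and positive semi-definite. Adding $a\cdot\bI_d$ yields $\widetilde{\Kc}_\eps \bA(t)\geq a\cdot\bI_d$ a.e. Smoothness and entrywise $H^1$-regularity follow from Lemma \ref{lem:convapp-1} applied componentwise to the (integrable) entries of $\Ec(\bA-a\cdot\bI_d)$.

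Second, for $\bL^1$-convergence I would invoke \eqref{eq:appidentity} entrywise on $\Ec(\bA-a\cdot\bI_d)\in\bL^1(\R)$, giving $\Kc_\eps\Ec(\bA-a\cdot\bI_d)\to \Ec(\bA-a\cdot\bI_d)$ in $\bL^1(\R)$. Restricting to $(0,1)$ (which only decreases $\bL^1$ norms) and adding back the constant $a\cdot\bI_d$ yields $\widetilde{\Kc}_\eps\bA\to \bA$ in $\bL^1(0,1)$. For the $\bH^1$-bound I would use the triangle inequality
$$\|\widetilde{\Kc}_\eps\bA\|_{\bH^1(0,1)} \leq \|\Kc_\eps\Ec(\bA-a\cdot\bI_d)\|_{\bH^1(\R)} + \|a\cdot\bI_d\|_{\bH^1(0,1)},$$
bound the first term entrywise by \eqref{eq:apph1norm} to get $C\eps^{-3/2}\|\bA-a\cdot\bI_d\|_{\bL^1(0,1)}$, and absorb the (finite, $\eps$-independent) second term into the dominant $\eps^{-3/2}$ factor for small $\eps$.

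No step is technically difficult; the lemma is essentially bookkeeping to promote the scalar approximation-of-identity statements to matrix-valued functions with a positivity constraint. The only real point of care—and the reason for the slightly awkward definition of $\widetilde{\Kc}_\eps$—is that neither the trivial extension of $\bA$ nor its direct convolution with $K_\eps$ preserves the bound $\bA\geq a\cdot\bI_d$: one must first shift to the non-negative piece $\bA-a\cdot\bI_d$, exploit that convolution with a non-negative kernel preserves the pointwise cone of positive semi-definite matrices, and then add the constant back.
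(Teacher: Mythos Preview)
Your proposal is correct and follows essentially the same approach as the paper: both arguments reduce to Lemma~\ref{lem:convapp-1} applied entrywise, use that convolution with the non-negative kernel $K_\eps$ preserves positive semi-definiteness of $\bA-a\cdot\bI_d$ (the paper verifies this by testing against $x^T(\cdot)x$ for arbitrary $x\in\R^d$, which is the same content as your ``non-negative combination'' remark), and then handle the $\bL^1$-convergence and $\bH^1$-bound exactly as you do. Your write-up of the $\bH^1$-bound is in fact more explicit than the paper's, which merely says ``by similar arguments.''
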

\begin{proof}
First it follows from Lemma \ref{lem:convapp-1} that $\widetilde{\Kc}_\eps \bA \in \bH^1(0,1)$. To show $\widetilde{\Kc}_\eps \bA \in \bH^1_a(0,1)$, it suffices to show $\mathcal{K}_\eps \left(\Ec (\bA - a\cdot\bI_d)\right)$ is positive semi-definite. Indeed, for any fixed $x\in \R^d$,
$$\bea
x^T \K_\eps \left(\Ec \big(\bA - a\cdot\bI_d\big)\right) x & = \K_\eps\left(\Ec \big(x^T(\bA - a\cdot\bI_d)x \big)\right) \\
& = K_\eps(\cdot) \ast \Ec \big(x^T(\bA(\cdot) - a\cdot\bI_d)x \big) \geq 0
\ena
$$
where we have used the assumption that $\bA(\cdot) - a\cdot\bI_d$ is positive semi-definite a.e. on $(0,1)$. Next from Lemma \ref{lem:convapp-1} and the fact that $\Ec \big(\bA - a\cdot\bI_d\big)\in \bL^1(\R)$,  we have
$$
\bea
\|\widetilde{\Kc}_\eps \bA - \bA\|_{\bL^1(0,1)} 
& = \|\Rc\left(\K_\eps \left(\Ec \big(\bA - a\cdot\bI_d\big)\right) - \Ec \big(\bA - a\cdot\bI_d\big)\right)\|_{\bL^1(0,1)}\\
& \leq
 \|\K_\eps \left(\Ec \big(\bA - a\cdot\bI_d\big)\right) - \Ec \big(\bA - a\cdot\bI_d\big)\|_{\bL^1(\R)}
 \gt 0.
\ena
$$
By similar arguments one can show that $\|\widetilde{\Kc}_\eps \bA\|_{\bH^1(0,1)} \leq C\eps^{-\frac{3}{2}}$.
\end{proof}

The next lemma characterizes explicitly for the minimizer of the second component (as a functional of $\bA$) of the functional $F$ defined in \eqref{e:main-functional-1}. Recall the notation $|\bA|$ of a matrix $\bA$ defined in Section \ref{subsection:notation}.

\begin{lem}\label{lem:secondofF}
Let $\bB$ be a fixed symmetric matrix. Let $\bA$ be a minimizer of the functional 
 $$
 \mathcal{G} (\bA) := (\bB - \bA)^2: \bA^{-1}
 $$ 
over all positive matrices. Then it holds that $\bA = |\bB|$. With this choice of $\bA$, $ \mathcal{G} (\bA) = 2(\Tr(|\bB|)  - \Tr(\bB))$.
\begin{proof}
 The lemma follows from the fact that the functional $\mathcal{G}$ can be rewritten as 
 $$
 \bea
\mathcal{G} (\bA) & = \Tr(\bB^2 \bA^{-1}) + \Tr(\bA) - 2\Tr(\bB) \\
& = \Tr(\bB^2 \bA^{-1}) + \Tr(\bA) - 2 \Tr (|\bB|) + 2\Tr(|\bB|) - 2\Tr(\bB)\\
& = \Tr\left((|\bB| - \bA)^2 \bA^{-1}\right) + 2\Tr(|\bB|) - 2\Tr(\bB)\\
 & = (|\bB| - \bA)^2 : \bA^{-1} + 2\Tr(|\bB|) - 2\Tr(\bB).
\ena 
 $$
\end{proof}
\end{lem}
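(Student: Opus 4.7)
The proof hinges on a clean algebraic rewriting of the functional that makes its minimum apparent without any calculus of variations. My plan is the following.

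First, I would expand $\mathcal{G}(\bA)$ using cyclic invariance of the trace. Writing out $(\bB-\bA)^2 \bA^{-1} = \bB^2 \bA^{-1} - \bB - \bA \bB \bA^{-1} + \bA$ and taking the trace gives
\[
\mathcal{G}(\bA) = \Tr(\bB^2 \bA^{-1}) + \Tr(\bA) - 2 \Tr(\bB),
\]
where the two cross terms collapse into $-2\Tr(\bB)$ by cyclicity.

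Next comes the key observation: since $\bB = \bP^T \bLambda \bP$ with $\bP$ orthogonal, we have $|\bB|^2 = \bP^T |\bLambda|^2 \bP = \bP^T \bLambda^2 \bP = \bB^2$. Therefore $\Tr(\bB^2 \bA^{-1}) = \Tr(|\bB|^2 \bA^{-1})$, and I can artificially add and subtract $2\Tr(|\bB|)$ to complete the square, yielding
\[
\mathcal{G}(\bA) = \Tr\bigl((|\bB| - \bA)^2 \bA^{-1}\bigr) + 2\Tr(|\bB|) - 2\Tr(\bB).
\]

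Finally, I would argue that the first term on the right is non-negative, with equality iff $\bA = |\bB|$. Indeed, $(|\bB|-\bA)^2$ is positive semidefinite (the square of a symmetric matrix) and $\bA^{-1}$ is positive definite, so $\Tr\bigl((|\bB|-\bA)^2 \bA^{-1}\bigr) = \Tr\bigl(\bA^{-1/2} (|\bB|-\bA)^2 \bA^{-1/2}\bigr) \geq 0$, and this vanishes only when $(|\bB|-\bA)^2 = 0$, i.e. $\bA = |\bB|$. Substituting this minimizer back gives $\mathcal{G}(|\bB|) = 2\Tr(|\bB|) - 2\Tr(\bB)$, as claimed.

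There is really no serious obstacle; the only subtlety worth flagging is that $|\bB|$ must itself be admissible as a positive (definite) matrix, which implicitly requires $\bB$ to be invertible. If $\bB$ has a zero eigenvalue one takes an infimizing sequence $\bA = |\bB| + \eta \bI_d$ and lets $\eta \downarrow 0$ to obtain the stated value as an infimum rather than a minimum. I would mention this briefly but keep the main statement as in the lemma.
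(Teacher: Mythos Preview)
Your proposal is correct and follows essentially the same approach as the paper: expand $\mathcal{G}(\bA)$ to $\Tr(\bB^2\bA^{-1}) + \Tr(\bA) - 2\Tr(\bB)$, use $|\bB|^2 = \bB^2$ together with an add-and-subtract of $2\Tr(|\bB|)$ to recombine into $(|\bB|-\bA)^2:\bA^{-1} + 2\Tr(|\bB|) - 2\Tr(\bB)$, and read off the minimizer. You actually supply more detail than the paper does---in particular the justification that the first term is nonnegative via $\Tr(\bA^{-1/2}(|\bB|-\bA)^2\bA^{-1/2})\geq 0$, and the remark on invertibility of $\bB$---but the core argument is identical.
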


Finally the following Mazur's Lemma is useful to obtain a strong convergent subsequence from a weakle convergent sequence. The proof can be found in \cite[Corollary 3.8]{B10}.

\begin{lem}\label{lem:mazur}
\textbf{(Mazur's lemma)} Let $X$ be a Banach space and let $\{u_n\}_{n\in \N}$ be a sequence in $X$ that converges weakly to $u\in X$. Then there exists a sequence $\{\overline{u}_j\}_{j\in \N}$ defined by the convex combination of $\{u_n\}_{n\in \N}$, namely
\be
\overline{u}_j = \sum_{n = 1}^{N(j)} \alpha_{j,n} u_n, \quad \alpha_{j, n} \in [0, 1], \quad \sum_{n=1}^{N(j)} \alpha_{j, n} = 1,
\en
such that $\overline{u}_j$ converges to $u$ strongly in $X$.
\end{lem}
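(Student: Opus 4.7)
The plan is to invoke the Hahn-Banach separation theorem to show that $u$ lies in the norm closure of the convex hull of every tail of $\{u_n\}$, and then to read off the desired convex combinations by a simple distance-$1/j$ approximation.

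First I would fix $j \in \N$ and consider $C_j := \mathrm{conv}\{u_n : n \geq j\}$ together with its norm closure $\overline{C_j}$ in $X$. The central claim is that $u \in \overline{C_j}$. Suppose for contradiction that $u \notin \overline{C_j}$. Since $\overline{C_j}$ is a closed convex subset of $X$ disjoint from the compact convex set $\{u\}$, the second geometric form of the Hahn-Banach theorem yields an $f \in X^{\ast}$ and a real number $\alpha$ such that
\begin{equation*}
f(v) \leq \alpha < f(u) \qquad \text{for all } v \in \overline{C_j}.
\end{equation*}
In particular $f(u_n) \leq \alpha$ for every $n \geq j$. Passing to the limit using the weak convergence $u_n \rightharpoonup u$ forces $f(u) \leq \alpha$, contradicting $f(u) > \alpha$. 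Hence $u \in \overline{C_j}$.

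Having $u \in \overline{C_j}$ for each $j$, I would pick $\overline{u}_j \in C_j$ with $\|\overline{u}_j - u\|_X < 1/j$. By the very definition of the convex hull, such a $\overline{u}_j$ admits a representation as a finite convex combination of elements of $\{u_n\}_{n \geq j}$, i.e.\
\begin{equation*}
\overline{u}_j = \sum_{n=j}^{N(j)} \alpha_{j,n} u_n, \qquad \alpha_{j,n} \in [0,1], \qquad \sum_{n=j}^{N(j)} \alpha_{j,n} = 1.
\end{equation*}
Setting $\alpha_{j,n} := 0$ for $n < j$ gives a convex combination indexed from $n=1$ as in the statement of the lemma, and by construction $\overline{u}_j \to u$ strongly in $X$.

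The only genuinely nontrivial ingredient is the Hahn-Banach separation of a point from a closed convex set; everything else is bookkeeping. In conceptual terms, the lemma is simply the statement that in a Banach space the strong and weak closures of a convex set coincide, which is precisely what Hahn-Banach delivers. Since the paper only uses the lemma as a tool and already attributes the proof to \cite[Corollary 3.8]{B10}, no finer ingredients (such as uniform convexity of $X$ or Banach-Saks type refinements) are needed.
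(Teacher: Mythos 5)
Your argument is correct: the Hahn--Banach separation of $u$ from the norm-closed convex hull of each tail $\{u_n : n \geq j\}$, combined with weak convergence to derive the contradiction, is precisely the standard proof of Mazur's lemma, and the paper itself gives no proof but simply cites \cite[Corollary 3.8]{B10}, whose proof runs along exactly these lines (weak and strong closures of convex sets coincide). As a minor bonus, your tail version (coefficients supported on $n \geq j$) is in fact the form the paper actually uses when it applies the lemma to $\{\bA_k\}_{k\geq j}$ in the proof of Theorem \ref{thm:main}.
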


\section{Proof of Proposition \ref{prop:gammalimit-E}}\label{app-d}

We first show the liminf inequality. Suppose that $\{m_\eps\}\subset \bH^1_\pm(0,1)$ and that $ m_\eps \gt m \text{ in } \bL^1(0,1)$, we want to prove that $E(m) \leq \liminf_{\eps\gt 0} E_\eps(m_\eps)$. We may assume that $\liminf_{\eps\gt 0} E_\eps(m_\eps) < \infty$ since otherwise there is nothing to prove. Let $\{\eps_n\}$ and $\{m_{n}\}\subset \bH^1_\pm(0,1)$ be subsequences such that $\eps_n \gt 0$ and that
$$\lim_{n\gt \infty} E_{\eps_n}(m_n) = \liminf_{\eps\gt 0} E_\eps(m_\eps) < \infty.$$
By Lemma \ref{lem:compact-1}, $m \in \BV(0,1; \EE)$ and one can extract a further subsequence (without relabeling) such that $m_n(t) \gt m(t)$ a.e. $t\in (0,1)$. It is sufficient to deal with the case where $m$ only has a single jump at $\tau \in (0,1)$, i.e. 
\be\label{eq:m}
m(t) = 
\begin{cases}
m(\tau^-), \text{ if } t \in(0, \tau),\\
m(\tau^+), \text{ if } t\in [\tau,1).
\end{cases}
\en
Let $0 < t_1 < \tau < t_2 < 1$ and $m_n(t_1) \gt m(t_1) = m(\tau^-), m_n(t_2) \gt m(t_2) = m(\tau^+)$. Define $\widetilde{m}_n = m_n(\eps_n^{-1}(t - (t_1 + t_2)/2))$.  Then it follows from the equality \eqref{eq:eqphi} that
$$
\bea
& \lim_{n\gt \infty} \frac{1}{4}\int_{t_1}^{t_2} \eps_n \big|m_n^\prime(t)\big|^2 + \frac{1}{\eps_n}\big|\nabla V(m_n(t))\big|^2 dt\\
 &  \quad =  \lim_{n\gt \infty} \frac{1}{4}\int_{\frac{t_1 - t_2}{2\eps_n}}^{\frac{t_2 - t_1}{2\eps_n}}
|\widetilde{m}_n^\prime(t)|^2 + |\nabla V(\widetilde{m}_n(t))|^2 dt 
\\ 
 &\quad \geq   \inf_{T, m} \Big\{ \frac{1}{4}\int_{-T}^{T} |m^\prime(t)|^2 + |\nabla V(m(t))|^2 dt : T > 0, m\in \bH^1(-T, T) \text{ and }\\
 &  \quad \quad \quad m(-T) = x_-, m(T) = x_+ \Big\} = \Phi(m(\tau^-), m(\tau^+)).
\ena
$$
Similarly, taking into account that $m_n$ satisfies the end point conditions, one can obtain
$$
\lim_{n\gt \infty} \frac{1}{4}\int_{0}^{t_1} \eps_n \big|m_n^\prime(t)\big|^2 + \frac{1}{\eps_n}\big|\nabla V(m_n(t))\big|^2 dt \geq
\Phi(x_-, m(0^+))
$$
and 
$$
\lim_{n\gt \infty} \frac{1}{4}\int_{t_2}^{1} \eps_n \big|m_n^\prime(t)\big|^2 + \frac{1}{\eps_n}\big|\nabla V(m_n(t))\big|^2 dt \geq
\Phi(m(1^-), x_+).
$$
Therefore the liminf inequality $E(m) \leq \liminf_{\eps \gt 0} E_\eps(m_\eps)$ follows.

Now we prove the limsup inequality, and again it suffices to consider $m$ defined by \eqref{eq:m}. According to the equation \eqref{eq:eqphi}, for any small $\eta > 0$, there exists $T > 0$ and $m_i\in \bH^1(-T, T), i=1,2,3$ such that 
$$
\bea
&  m_1(-T) = x_-, m_1(T) = m(0^+) \text{ and } \mathcal{J}_T(m_1) \leq \Phi(x_-, m(0^+)) + \eta/3,\\
&  m_2(-T) = m(\tau^-),  m_2(T) = m(\tau^+) \text{ and } \mathcal{J}_T(m_2) \leq \Phi(m(\tau^-), m(\tau^+)) + \eta/3,\\
& m_3(-T) = m(1^-),  m_3(T) = x_+   \text{ and } \mathcal{J}_T(m_3) \leq \Phi(m(1^-), x_+) + \eta/3.
\ena
$$
Then for $\eps > 0$ small enough, we define the recovery sequence 
$$
m_\eps(t) = \begin{cases}
m_1\left(-T + \eps^{-1} t\right) & \text{ if } t \in (0, 2\eps T),\\ 
m(0^+) & \text{ if } t \in (2\eps T, \tau -  \eps T),\\ 
m_2\left(\eps^{-1}(t - \tau)\right) & \text{ if } t \in (\tau -  \eps T, \tau + \eps T),\\
m(1^-) & \text{ if } t \in ( \tau + \eps T, 1 - 2\eps T),\\
m_3\left(\eps^{-1}(t - 1) +  T\right) & \text{ if } t \in (1 - 2\eps T, 1).
\end{cases}
$$
It is clear that $m_\eps \in \bH^1_\pm(0,1)$ and $m_\eps \gt m$ in $\bL^1(0,1)$ as $\eps  \gt 0$. Furthermore, we have 
$$
\bea
 &\limsup_{\eps \gt 0}  E_\eps(m_\eps)\\
  & =\limsup_{\eps \gt 0}  \left\{\frac{1}{4}\int_0^1 \eps |m_\eps^\prime(t)|^2 + \frac{1}{\eps} |\nabla V(m_\eps(t))|^2 dt\right\}\\
& = \limsup_{\eps \gt 0} \Big\{\frac{1}{4} \int_0^{2\eps T} \eps |m_\eps^\prime(t)|^2 + \frac{1}{\eps} |\nabla V(m_\eps(t))|^2 dt \\
& + \frac{1}{4} \int_{\tau - \eps T}^{\tau + \eps T} \eps |m_\eps^\prime(t)|^2 + \frac{1}{\eps} |\nabla V(m_\eps(t))|^2 dt +\frac{1}{4} \int_{1 - 2\eps T}^1 \eps |m_\eps^\prime(t)|^2 + \frac{1}{\eps} |\nabla V(m_\eps(t))|^2 dt\Big\}\\
 & \leq \Phi(x_-, m(0^+)) + \Phi(m(\tau^-), m(\tau^+)) + \Phi(m(1^-), x_+) + \eta.
\ena
$$ 
Since $\eta$ is arbitrary, the limsup inequality follows.  
\bibliographystyle{siamplain}
\bibliography{transpathsiam}
\end{document}